\documentclass[12pt]{article}
\usepackage{graphicx}
\usepackage{tikz}

\usepackage{palatino}
\usepackage[total={7in, 9.2in}]{geometry}
\usepackage{amsmath, amssymb, amsthm}
\usepackage[square,sort,comma,numbers]{natbib}
\usepackage{enumitem}
\usepackage{array}
\usepackage{booktabs}

\usepackage[titles]{tocloft}
\theoremstyle{plain}
\newtheorem{theorem}{Theorem}[section]
\newtheorem*{theoremIntro}{Theorem}
\newtheorem{proposition}[theorem]{Proposition}
\newtheorem{lemma}[theorem]{Lemma}
\newtheorem{corollary}[theorem]{Corollary}
\newtheorem{conjecture}[theorem]{Conjecture}

\theoremstyle{definition}
\newtheorem{definition}[theorem]{Definition}
\newtheorem{example}[theorem]{Example}

\theoremstyle{remark}
\newtheorem{remark}[theorem]{Remark}
\newtheorem{question}[theorem]{Question}

\newcommand{\CC}{\mathbb C}
\newcommand{\PP}{\mathbb P}
\newcommand{\QQ}{\mathbb Q}
\newcommand{\RR}{\mathbb R}
\newcommand{\TT}{\mathbb T}
\newcommand{\ZZ}{\mathbb Z}

\newcommand{\MR}{\mathcal R}

\DeclareMathOperator{\GL}{GL}

\DeclareMathOperator{\EES}{EES}
\DeclareMathOperator{\sym}{Sym}
\DeclareMathOperator{\diag}{Diag}
\DeclareMathOperator{\Hom}{Hom}
\DeclareMathOperator{\Proj}{Proj}
\DeclareMathOperator{\rank}{rank}
\DeclareMathOperator{\Ehr}{Ehr}
\DeclareMathOperator{\hgt}{ht}
\DeclareMathOperator{\Span}{span}
\DeclareMathOperator{\aff}{aff}

\DeclareMathOperator{\conv}{conv}
\DeclareMathOperator{\Fix}{Fix}
\DeclareMathOperator{\Vol}{Vol}

\DeclareMathOperator{\tr}{tr}
\newcommand{\Ev}{N_{\mathrm{even}}}
\newcommand{\Od}{N_{\mathrm{odd}}}
\newcommand{\orb}[1]{\TT_{#1}}

\usepackage[bookmarksnumbered=true]{hyperref}
\hypersetup{
     colorlinks = true,
     linkcolor = blue,
     anchorcolor = blue,
     citecolor = red,
     filecolor = blue,
     urlcolor = blue
     }

\title{The equivariant Ehrhart theory of polytopes\\ with order-two symmetries: Part II}
\author{Oliver Clarke}
\date{October 2026}

\begin{document}

\maketitle

\begin{abstract}
We give two counterexamples to Stapledon's Effectiveness Conjecture. Each counterexample is a $6$-dimensional lattice polytope with an order-two symmetry whose equivariant $H^*$-series $\varphi[t]$ is a polynomial with one coefficient that is not effective.
For polytopes with a symmetry of order two, the goal of this paper is to build up a picture of the landscape of results for when the Effectiveness Conjecture holds.
Our main result is a criterion, in terms of the faces of $P$, that determines when the Ehrhart ring of $P$ admits an invariant linear system of parameters of the correct representation type.
We use these results to reduce the space of possible counterexamples to one of two cases. A randomised computer search produced counterexamples in each case.
We also give an example where the equivariant $H^*$-series is effective but admits no invariant linear system of parameters, neither for $P$ nor any invariant subdivision of it, answering a question of Stapledon in the case of groups of order two. 
\end{abstract}

\tableofcontents

\section{Introduction}\label{sec:intro}

Equivariant Ehrhart theory, introduced by Stapledon \cite{Sta11}, counts the lattice points of dilates of a symmetric lattice polytope $P$, and records them as permutation representations of a finite group $G$ acting on $P$. The resulting generating function is the \emph{equivariant Ehrhart series} $\EES_P^G(t)$. We consider $\EES_P^G(t)$ as the product of the \emph{equivariant $H^*$-series} $\varphi[t]$ and some generating function generalising the Ehrhart series of the standard simplex. The equivariant $H^*$-series is a power series whose coefficients are virtual representations of $G$, and is the equivariant generalisation of the classical $h^*$-polynomial $h^*_P$. However, unlike $h^*_P$, the series $\varphi[t]$ need not be a polynomial, and its coefficients need not be \emph{nonnegative}, which, in the language of representations, corresponds to a virtual character being \emph{effective}. In this paper, we will give counterexamples to the conjecture that $\varphi[t]$ is polynomial if and only if it is effective, usually called the \emph{Effectiveness Conjecture}.

\medskip

\noindent \textbf{Brief setup.}
The geometric setup for equivariant Ehrhart theory is displayed in Figure~\ref{fig:setup} and explicitly given in Section~\ref{setup}. The polytope $P$ lies in an affine hyperplane at height one in $\RR^{n+1}$ so that the lattice points of the $k$-th dilate of $P$ are exactly the lattice points of the cone over $P$ at height $k$. A group $G \subseteq \GL_{n+1}(\ZZ)$ that leaves $P$ invariant acts linearly on $\RR^{n+1}$ and induces an action on each affine subspace $\widetilde M_k$ at height $k$. In this setup, the denominator of the equivariant Ehrhart series $\det(I - t\rho)$ is the equivariant version of dividing the Ehrhart ring of $P$ by a \emph{linear system of parameters}. Theorem~\ref{thm:lsop} explains when such a division is possible when $G$ is group of order two. The Effectiveness Conjecture asserts that if $\varphi[t]$ is a polynomial, then every coefficient of $\varphi[t]$ is the character of a genuine representation of $G$.

\begin{figure}[t]
\centering
\begin{tikzpicture}[scale=1.25]
  \fill[gray!12] (0,0) -- (-2.0,3.0) -- (4.0,3.0) -- cycle;
  \draw[gray!60] (0,0) -- (-2.0,3.0);
  \draw[gray!60] (0,0) -- (4.0,3.0);
  \foreach \y in {0,1,2} {\foreach \x in {-3,-2,-1,0,1,2,3,4,5} {\fill[gray!55] (\x/1.5,\y) circle (0.04);}}
  \draw[dashed,gray] (-2.3,0) -- (4.0,0) node[right,black]{\small $\widetilde M_0$};
  \draw[dashed,gray] (-2.3,1) -- (4.0,1) node[right,black]{\small $\widetilde M_1$};
  \draw[dashed,gray] (-2.3,2) -- (4.0,2) node[right,black]{\small $\widetilde M_2$};
  \draw[very thick] (-2/3,1) -- (4/3,1);
  \foreach \x in {-1,0,1,2} {\fill (\x/1.5,1) circle (0.05);}
  \node at (-0.95,1.2) {\small $P$};
  \draw[very thick,gray!70] (-4/3,2) -- (8/3,2);
  \foreach \x in {-2,-1,0,1,2,3,4} {\fill[gray!70] (\x/1.5,2) circle (0.05);}
  \node at (-1.62,2.2) {\small $2P$};
  \draw[thick,dotted] (0,0) -- (1,3);
  \fill (0,0) circle (0.05) node[below]{\small $0$};
  \draw[->,thick] (0,0) -- (2/3,2);
  \node at (0.34,1.78) {\small $\alpha$};
\end{tikzpicture}
\caption{The setup: the polytope $P$ sits at height one, its dilate $kP$ at height $k$, and the group acts linearly preserving heights. The dotted line spanned by $\alpha$ is a distinguished one-dimensional fixed space of the action.}
\label{fig:setup}
\end{figure}

This note follows \cite{CHK} and explores the case where $G$ has order two. Writing the generator $\sigma$ of $G$ in Reiner's normal form \cite{Rei57}, see Section~\ref{sec:order-two}, we define $p$ and $q$ to be the dimensions of the $(+1)$ and $(-1)$-eigenspaces of $\sigma$, respectively. Our results below are expressed in terms of $p$, $q$ and the fixed polytope $P^\sigma = \{x \in P \colon \sigma(x) = x\}$.

\medskip

\noindent \textbf{Counterexamples.}
Proposition~\ref{prop:open} characterises possible counterexamples to the Effectiveness Conjecture into two types (C1) and (C2). We exhibit counterexamples of each type: Example~\ref{ex:counterexample} and \ref{ex:counterexample-c2}, respectively. Both are 
$6$-dimensional lattice polytopes with eight vertices.

\begin{theoremIntro}[]
There is a $\sigma$-invariant lattice polytope whose equivariant $H^*$-series is a polynomial but is not effective. Example~\ref{ex:counterexample} is a $6$-dimensional polytope $P$ with vertices
\[
\begin{array}{llll}
(1,0,0,0,0,0,1), & (0,0,1,0,0,0,1), & (0,1,0,1,1,0,1), & (0,0,1,1,1,0,1),\\
(0,0,0,1,0,0,1), & (0,0,0,0,0,1,1), & (1,1,0,0,1,0,1), & (1,1,0,0,0,1,1),
\end{array}
\]
where $\sigma(u,v,h) = (v,u,h)$ for $u,v \in \RR^3$ and $h \in \RR$. So in this case $p = 4$ and $q = 3$. Let $\chi_0$ and $\chi_1$ be the trivial and sign characters of $G = \langle \sigma \rangle$. Then the equivariant $H^*$-series of $P$ with respect to $G$ is 
\[
\varphi[t] = (1+t^2+t^4)\chi_0 + (t-t^2+t^3)\chi_1,
\]
where the coefficient $\chi_0 - \chi_1$ of $t^2$ is not effective. Example~\ref{ex:counterexample-c2} is another $6$-dimensional polytope where the action of $\sigma$ has $p = 5$ and
$q = 2$, and $\varphi[t] = (1+t^2+t^3)\chi_0 + (t-t^2+t^3)\chi_1$.
\end{theoremIntro}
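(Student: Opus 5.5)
The claim is a direct computation. For $G=\langle\sigma\rangle$ of order two the equivariant $H^*$-series is determined by two ordinary generating functions. The first is the Ehrhart series of $P$, which gives the value of $\varphi[t]$ at the identity. The second is the Ehrhart series of the fixed polytope $P^\sigma$ (a rational polytope in general), which gives the value at $\sigma$. Concretely,
\[
\EES_P^G(t)(\sigma) = \sum_{k\ge 0} \#\bigl(kP \cap \widetilde M_k\bigr)^{\sigma} t^k = \Ehr_{P^\sigma}(t),
\]
since a lattice point is fixed by the permutation action exactly when it lies in $P^\sigma$. The denominator is $\det(I-t\rho)$. It evaluates to $(1-t)^{7}$ at the identity and to $(1-t)^{p}(1+t)^{q}$ at $\sigma$. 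So I would compute
\[
\varphi[t](1) = h^*_P(t), \qquad \varphi[t](\sigma) = \Ehr_{P^\sigma}(t)\,(1-t)^{p}(1+t)^{q},
\]
and recover the coefficients of $\chi_0$ and $\chi_1$ as $\tfrac12(\varphi(1)\pm\varphi(\sigma))$.

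\textbf{Identity part.} I would first confirm that the eight listed points are affinely independent up to the expected dimension, i.e.\ that $P$ is $6$-dimensional. I would then compute $h^*_P$, either by counting lattice points in $kP$ for $k=0,\dots,6$ or with software such as Normaliz. The target is $h^*_P(t)=1+t+t^3+t^4$, the sum of the two claimed coefficient polynomials evaluated at $\chi_0=\chi_1=1$. Its normalized volume is $4$, so a triangulation into unimodular or low-volume simplices gives an independent check.

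\textbf{Fixed part.} $\sigma$ swaps $u$ and $v$, so $P^\sigma$ is the image of $P$ under the averaging projection $x\mapsto \tfrac12(x+\sigma x)$ intersected appropriately. More precisely, $P^\sigma$ equals the set of averages $\tfrac12(x+\sigma x)$ for $x\in P$, because $P$ is convex and $\sigma$-invariant. Hence $P^\sigma=\conv\{\tfrac12(w+\sigma w)\}$ over the vertices $w$ of $P$, which is a half-integral polytope in the $4$-dimensional fixed space. I would enumerate these averaged vertices and compute the Ehrhart quasi-polynomial of $P^\sigma$ with respect to the fixed lattice (pairs $(u,u,h)$ with $u\in\ZZ^3$), or compute it directly from counts for small $k$. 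I would then multiply by $(1-t)^4(1+t)^3$. The target is $\varphi(\sigma)=1-t+2t^2-t^3+t^4$.

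\textbf{Main obstacle and second example.} The hardest part is certifying that $\Ehr_{P^\sigma}(t)(1-t)^4(1+t)^3$ is really a polynomial rather than a series that only looks polynomial in low degrees. Since $P^\sigma$ has denominator $2$, its Ehrhart series has a rational form with denominator dividing $(1-t)(1-t^2)^4$. So it suffices to verify that the period-$2$ quasi-polynomial has the required form, namely that the $(1+t)$-factors cancel. I would do this by computing enough values, $k\le 10$ or so, to determine the quasi-polynomial of degree $4$ in each residue class. Once both evaluations are in hand, halving their sum and difference gives $(1+t^2+t^4)\chi_0+(t-t^2+t^3)\chi_1$, whose $t^2$ coefficient is $\chi_0-\chi_1$, which is not effective. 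Example~\ref{ex:counterexample-c2} is handled identically with $(p,q)=(5,2)$ and denominator $(1-t)^5(1+t)^2$.
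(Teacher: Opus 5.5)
Your proposal is correct and is essentially the paper's argument: compute $h^*_P$ and $\varphi[t](\sigma)=(1-t)^p(1+t)^q\Ehr_{P^\sigma}(t)$ (the paper does this with Normaliz), then split the result into its $\chi_0$ and $\chi_1$ parts. The one difference is in how polynomiality is certified for Example~\ref{ex:counterexample}: the paper gets it for free from Corollary~\ref{cor:p-eq-q-plus-1}, since $p=q+1$ and the fixed vector $\alpha$ has height one, whereas you interpolate the period-two quasi-polynomial, which is equally rigorous (its constituents have degree $p-1=3$, so your degree-$4$ bound is merely generous).
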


We found these examples by reducing the unknown cases of Conjecture~\ref{conj:eff} to two cases followed by a randomised search.

\medskip

\noindent \textbf{Overview and results.}
In Section~\ref{sec:order-two}, we fix our notation and give an exact criterion for $\varphi[t]$ to be a polynomial in terms of the two constituents of the Ehrhart quasi-polynomial of $P^\sigma$. See Proposition~\ref{prop:poly}.
In Section~\ref{sec:known}, we recall Stapledon's hierarchy of conditions and explain the cases of the Effectiveness Conjecture that are known in the literature.
In Section~\ref{sec:main}, we prove our main result, Theorem~\ref{thm:lsop}, which characterises the existence of a certain invariant linear system of parameters by a condition on the faces of $P$. We say that a face $F$ is \emph{$\sigma$-odd} if it is $\sigma$-invariant and every $\sigma$-fixed element of the group generated by the lattice points of $F$ has even height, see Definition~\ref{def:odd}. We call a $\sigma$-odd face $F$ \emph{obstructing} if $\dim F^\sigma \ge q$.

\begin{theoremIntro}[Theorem~\ref{thm:lsop}]
Let $\rho : G \rightarrow \GL_{n+1}(\ZZ)$ be the natural representation of $G$. The Ehrhart ring $S_P$ has a $\sigma$-stable linear system of parameters $V \subseteq (S_P)_1$ of class $[V] = [\rho\otimes\CC]$ if and only if $P$ has no obstructing face. In that case $\varphi[t]$ is the graded character of $S_P/(V)$ and, in particular, it is effective.
\end{theoremIntro}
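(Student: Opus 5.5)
The plan is to treat $S_P$ as the coordinate ring of the affine toric variety $X=\operatorname{Spec} S_P$ and to translate the linear-system-of-parameters condition into a statement about common zeros on torus orbits. Write $\chi_0,\chi_1$ for the trivial and sign characters. A subspace $V\subseteq (S_P)_1$ that is $\sigma$-stable of class $[\rho\otimes\CC]$ is the same as a choice $V=V_+\oplus V_-$, where $V_+$ is a $p$-dimensional subspace of the $\sigma$-invariant degree-one elements and $V_-$ a $q$-dimensional subspace of the anti-invariant ones. Since $\dim V=n+1=\dim X$, $V$ is a linear system of parameters if and only if the common zero locus of $V$ in $X$ is the cone point.

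The orbits of $X$ other than the cone point correspond to nonempty faces $F$. For $L_F$ the group generated by the lattice points of $F$, the points of the orbit $O_F$ are the homomorphisms $\chi\colon L_F\to\CC^*$, and a degree-one element $\theta=\sum c_m x^m$ takes the value $\sum_{m\in F} c_m\chi(m)$ there. So the lsop condition becomes: for every nonempty face $F$, the elements of $V$ have no common zero in $O_F$. All the zero loci involved are stable under the torus $\CC^*$ acting by height, so they are cones.

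\textbf{Necessity.} Let $\varepsilon(m)=(-1)^{\hgt(m)}$ and let $F$ be a $\sigma$-invariant face. Consider the locus
\[
Z_F=\{\chi\in O_F : \chi\circ\sigma=\varepsilon\chi\}.
\]
For every invariant degree-one element $\theta=\sum_m c_m(x^m+x^{\sigma m})$ and every $\chi\in Z_F$,
\[
\theta(\chi)=\sum_m c_m\bigl(\chi(m)+\chi(\sigma m)\bigr)=\sum_m c_m\chi(m)(1-1)=0 .
\]
So $V_+$ vanishes identically on $Z_F$.

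$Z_F$ is nonempty exactly when $F$ is $\sigma$-odd. The necessary condition is clear: if $m\in L_F^\sigma$, then $\chi(m)=\varepsilon(m)\chi(m)$, so $\hgt(m)$ must be even. For sufficiency I would show that $\varepsilon$, once trivial on $L_F^\sigma$, lies in the image of $\chi\mapsto(\chi\circ\sigma)\chi^{-1}$. I would do this by writing $\sigma|_{L_F}$ in Reiner's normal form (trivial, sign and swap blocks) and checking block by block; in effect this computes an $H^1(\ZZ/2,-)$.

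When $Z_F$ is nonempty it is a coset of the $\sigma$-fixed subtorus of $O_F$. That subtorus has dimension $\rank L_F^\sigma=\dim F^\sigma+1$, using that the invariant part of $\aff(F)$ lifted to height one has this dimension. $Z_F$ is also stable under the height $\CC^*$. Its closure is therefore an affine cone of dimension $\dim F^\sigma+1$, and $q$ linear forms from $V_-$ cut it down to dimension at least $\dim F^\sigma+1-q$. If $F$ is obstructing, this is at least $1$, so $V$ has a common zero off the cone point and cannot be an lsop.

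\textbf{Sufficiency.} I would take $V_+$ and $V_-$ generic and bound the common zero locus orbit by orbit. On $O_F$, I expect the common zero set of a generic $V_+$ to have dimension at most $\max(\dim F^\sigma+1,\ \cdot\,)$, concentrated on the twisted loci $Z_{F'}$, and to be cut to the cone point by generic $V_-$ when $\dim F^\sigma<q$. Non-invariant faces should be handled by pairing $F$ with $\sigma F$: on $O_F$ the invariant elements $x^m+x^{\sigma m}$ restrict to $x^m$ for $m\in F\setminus\sigma F$, so they already behave like a generic set of forms there.

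This step, controlling the zero locus of generic invariant forms on $O_F$ and showing that away from $Z_F$ it is as small as a dimension count predicts, is where I expect the main obstacle. I would attack it by restricting to the fixed subtorus of $O_F$ and the coset where $\chi\circ\sigma$ and $\chi$ differ by characters other than $\varepsilon$, and doing a dimension count on each piece, with an incidence-variety/Bertini argument for genericity.

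\textbf{Character statement.} Once $V$ is an lsop, $S_P$ is Cohen–Macaulay (Hochster), hence free over $\CC[V]=\sym(V)$, and $G$-equivariantly
\[
S_P\cong \sym(V)\otimes S_P/(V).
\]
Taking graded characters, the Ehrhart series $\EES_P^G(t)$ equals the graded character of $S_P/(V)$ divided by $\det(I-t\rho)$, because $[V]=[\rho\otimes\CC]$. Comparing with the definition of $\varphi[t]$ shows that $\varphi[t]$ is the graded character of the genuine representation $S_P/(V)$, hence it is effective.
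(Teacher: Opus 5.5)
Your necessity argument and the character statement are correct and essentially follow the paper. The paper works in $\Proj S_P$ rather than the affine cone. It also shows that $\varepsilon$ lies in the image of $\chi\mapsto\chi\circ(\sigma-1)$ more quickly than a block-by-block Reiner check: it factors through $(\sigma-1)M_F$ and extends characters using divisibility of $\CC^*$.

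One correction: what you call $L_F$ is the paper's $M_F$, and the orbit $O_F$ is the torus of the saturated lattice $\Span_\RR F\cap\ZZ^{n+1}$, not of $M_F$. This is harmless, because degree-one forms only see $\chi|_{M_F}$ and restriction is surjective with finite kernel. It does matter, however, that the twisting condition $\chi\circ\sigma=\varepsilon\chi$ is imposed only on $M_F$. On the saturation it is strictly stronger than $\sigma$-oddness; Example~\ref{ex:counterexample} is a case where the two differ.

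The genuine gap is sufficiency. You leave it open, and orbit-by-orbit dimension counts with incidence varieties are not what closes it.

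The missing idea is to control only the two base loci $Z_-=V(W_+)$ and $Z_+=V(W_-)$ in $\Proj S_P$, where $W_\pm$ are the invariant and anti-invariant degree-one forms.
\begin{itemize}
\item The projectivised $Z_F$ together form $Z_-$. Its dimension is $\max\dim F^\sigma$ over $\sigma$-odd faces $F$, hence at most $q-1$ when there is no obstructing face.
\item You never compute $Z_+$, the locus of $\sigma$-fixed characters on invariant faces. It has dimension exactly $p-1$.
\item $Z_+\cap Z_-=\emptyset$, since at a common point every $x^u$ would vanish, while vertex monomials of $F$ are nonzero on $O_F$.
\end{itemize}
With these facts, the only genericity input needed is elementary. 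If an irreducible $Y$ is not contained in the base locus of a linear system, the members vanishing on $Y$ form a proper subspace. So a general member cuts each of finitely many such $Y$ in pure codimension one.

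The paper first takes $q$ general anti-invariant forms. For $j\le q$, every component of the zero locus of $j-1$ such forms has dimension $\ge n-j+1>p-1=\dim Z_+$. So none lies in $Z_+$, and the $j$-th form cuts each by exactly one. Simultaneously $Z_-$ is cut down to the empty set, which is exactly where $\dim Z_-\le q-1$ is used. The paper then takes $p$ general invariant forms, which have no base points on the resulting locus of dimension $\le p-1$ avoiding $Z_-$.

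Your order (invariant forms first) also works, but only if $V_+$ is also chosen so that its zero locus misses $Z_+$. That is possible since $\dim Z_+=p-1<p$ and $W_+$ has no base points on $Z_+$. Without it, your step ``cut to the cone point by generic $V_-$'' fails, because every anti-invariant form vanishes on $Z_+$. Finally, non-invariant faces need no special pairing argument, since neither base locus meets their orbits.
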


From this we deduce the Effectiveness Conjecture in three cases, see Corollaries~\ref{cor:p-le-q}, \ref{cor:lattice-fixed} and \ref{cor:p-eq-q-plus-1}, and we recover a combinatorial criterion of Stapledon (for a group of order two) whose published proof is incomplete, see Remark~\ref{rem:cor710}. Table~\ref{tab:status} shows the status of the conjecture.
In Section~\ref{sec:examples}, we provide examples to highlight the definitions and results. In particular, Example~\ref{ex:TxI} shows that effectiveness does not imply the existence of a linear system of parameters with the correct representation type, and Examples~\ref{ex:counterexample} and \ref{ex:counterexample-c2} show that polynomiality does not imply effectiveness. 
In Section~\ref{sec:open}, we isolate the two cases where the conjecture can fail and give two reductions towards them using monotonicity and dilations. Firstly, in Corollary~\ref{cor:few-vertices}, we reduce the conjecture to polytopes with few vertices. Secondly, in Proposition~\ref{prop:dilate}, we give a reduction to odd dilates. These results reduce the search space for a counterexample to a smaller set which allows us to carry out the search in Section~\ref{sec:search}. The counterexamples lie in each branch of Proposition~\ref{prop:open}.
In Section~\ref{sec:proofs} and give two small examples illustrating the argument.
In Section~\ref{sec:conjectures}, we consider the $h^*$-polynomial $H$ of the fixed polytope $P^\sigma$ and give a inequality for its coefficients. We then refine Theorem~\ref{thm:lsop} to an obstruction at a single face, which allows us to apply it to an invariant subdivision.

\begin{theoremIntro}[Corollary~\ref{cor:q433} and Example~\ref{ex:TxI}]
There is a $\sigma$-invariant lattice polytope $P$ whose equivariant $H^*$-series is effective but neither its Ehrhart ring nor the toric face ring of any $\sigma$-invariant lattice polyhedral subdivision of $P$ has a $\sigma$-stable linear system of parameters of class $[\rho\otimes\CC]$.
\end{theoremIntro}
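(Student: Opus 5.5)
The statement is an existence claim, so the plan is to exhibit one explicit polytope, Example~\ref{ex:TxI}, and check three properties of it. The name suggests a product $T \times I$, with $T$ a triangle or simplex and $I$ a segment, and $\sigma$ acting so that $P$ contains an obstructing face. The three steps are as follows.

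\textbf{Step 1 (effectiveness).} Compute $\varphi[t]$ for the chosen $P$ directly. Write $\EES_P^G(t) = \varphi[t]/\det(I - t\rho)$ and evaluate the character at $\sigma$. The identity contributes the ordinary Ehrhart series of $P$. The element $\sigma$ contributes the Ehrhart quasi-series of $P^\sigma$ divided by $(1-t)^p(1+t)^q$. Using Proposition~\ref{prop:poly}, confirm that $\varphi[t]$ is a polynomial, then read off its coefficients in the basis $\chi_0, \chi_1$ and check that all of them are nonnegative.

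\textbf{Step 2 (no linear system of parameters for $P$).} Find a face $F$ that is $\sigma$-invariant and check that every $\sigma$-fixed element of the lattice generated by $F$'s lattice points has even height, so $F$ is $\sigma$-odd. Then check that $\dim F^\sigma \ge q$, so $F$ is obstructing. Theorem~\ref{thm:lsop} then rules out a $\sigma$-stable linear system of parameters of class $[\rho\otimes\CC]$ in $S_P$. A natural candidate for $F$ is a face whose fixed locus is the barycentre-type point at half-integer position, or the whole of $P$ itself.

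\textbf{Step 3 (no invariant subdivision helps).} This is the main obstacle. Here I would use Corollary~\ref{cor:q433}, the single-face refinement of Theorem~\ref{thm:lsop} from Section~\ref{sec:conjectures}. It should state that an obstruction localised at one face passes to the toric face ring of any $\sigma$-invariant lattice polyhedral subdivision $\mathcal S$. The argument goes like this. Any $\sigma$-invariant subdivision must contain some $\sigma$-invariant cell $C$ meeting the fixed locus, because $P^\sigma$ is covered by cells and the cells meeting a generic fixed point are permuted by $\sigma$. Then show that $C$ is itself $\sigma$-odd, with $\dim C^\sigma \ge q$. The lattice of $C$ lies inside the lattice of the obstructing face, so oddness is inherited. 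Subdividing can only shrink the lattice generated by a cell's points, never enlarge it, and this is the key point.

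The delicate part is the dimension bound. The fixed polytope $P^\sigma$ has dimension $p-1$, so the invariant cell containing a relatively generic point of the fixed locus must have $C^\sigma$ of full dimension $p-1$ within its piece. Hence I would choose the example with $p-1 \ge q$ and all of $P$ $\sigma$-odd, meaning the fixed sublattice sits only at even heights. Every cell $C$ containing a fixed point in its relative interior is then $\sigma$-invariant, odd, and satisfies $\dim C^\sigma = p-1 \ge q$ after choosing that point generically in $P^\sigma$. This obstructs every subdivision simultaneously, and combined with the effectiveness from Step 1 it proves the statement.
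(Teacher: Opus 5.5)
There is a genuine gap, and it lies in the example rather than in the subdivision step.

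\textbf{The design in Step 3 is incompatible with Step 1.} In Step 3 you take $p-1\ge q$ and ask that ``the fixed sublattice sits only at even heights'', that is, $\hgt(\Lambda)=2\ZZ$. Then Lemma~\ref{lem:leading} gives $\Od=0$ and $\deg(\Ev-\Od)=p-1\ge q$. By Proposition~\ref{prop:poly} (this is Corollary~\ref{cor:poly}(iv)), $\varphi[t]$ is not even a polynomial. So Step~1 fails for every polytope of the kind Step~3 requires.

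Suppose you retreat to $P$ being $\sigma$-odd only in the sense of Definition~\ref{def:odd}. Polynomiality forces $\hgt(\Lambda)=\ZZ$, so (Gen) must fail. You would still have to exhibit such a polytope with $\varphi[t]$ effective, and for an existence statement that is the whole content. The paper's $\sigma$-odd examples with $p>q$ do not help: Example~\ref{ex:square} is non-polynomial and Example~\ref{ex:counterexample} is non-effective.

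Two smaller points. Your Step~2 candidate, a face whose fixed part is a single point, can never be obstructing, since that requires $\dim F^\sigma\ge q\ge 1$. On the other hand, your reduction to a single invariant $\sigma$-odd cell is sound in itself. The monomial projection $S_{\mathcal S}\to S_C$ is an equivariant graded surjection for any $\sigma$-invariant cell $C$, because $C\cap\Gamma$ is a face of each cell $\Gamma$.

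\textbf{The missing idea.} Place the obstruction on a \emph{proper} face that no invariant subdivision can break, while $P$ itself is not $\sigma$-odd. In Example~\ref{ex:TxI}, $P=T\times[0,1]\times\{1\}$ has $p=4$ and $q=1$. It contains $\sigma$-fixed lattice points such as $(0,0,0,0,1)$, so $\hgt(\Lambda)=\ZZ$. A direct computation gives the effective series $\varphi[t]=(1+4t+2t^2)\chi_0+t\chi_1$. The unit square $F=[(1,1,0),(1,0,1)]\times[0,1]\times\{1\}$ is $\sigma$-odd with $\dim F^\sigma=1=q$, so Theorem~\ref{thm:lsop} handles $S_P$.

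For subdivisions, the refinement to use is Proposition~\ref{prop:subdiv}, not Corollary~\ref{cor:q433}, which is the statement itself. Its key hypothesis is that $F$ has no proper $\sigma$-invariant lattice subdivision. This holds because the only proper lattice subdivisions of a unit square are its two diagonal triangulations, and $\sigma$ swaps them. The argument then runs as follows.
\begin{enumerate}
\item $F$ is a cell of every $\sigma$-invariant subdivision $\mathcal S$.
\item A putative l.s.o.p.\ $V$ of $S_{\mathcal S}$ maps to a subspace $V'\subseteq(S_F)_1$ whose anti-invariant part has dimension at most $q$ and whose quotient is finite-dimensional.
\item This is impossible, because $Z_-(X_F)$ has a component of dimension $\dim F^\sigma\ge q$, which $q$ anti-invariant forms cannot cut away.
\end{enumerate}
Your generic-point argument would not detect this obstruction, because $F^\sigma$ is an edge on the boundary of the $3$-dimensional $P^\sigma$.
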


This gives an answer to Stapledon's Question~\ref{q:433} for groups of order two. We end Section~\ref{sec:conjectures} with two conjectures and a question. The first conjecture, Conjecture~\ref{conj:ineq}, is a coefficient-wise inequality between $h^*_P$ and the numerator $H$, which is weaker than effectiveness but remains open after the examples presented in this paper.

\section{Setup}\label{sec:complete_setup}

We begin by giving the setup for equivariant Ehrhart theory before specialising to the case of groups of order two, which is the setup we will assume throughout the rest of the paper.

\subsection{Setup for equivariant Ehrhart theory}
\label{setup}
Throughout fix a vector $\alpha \in \ZZ^{n+1}$ and a finite group $G \subseteq \GL_{n+1}(\ZZ)$ such that $g(\alpha) = \alpha$ for all $g \in G$. Let $\widetilde M_0 \subseteq \RR^{n+1}$ be a $G$-invariant $n$-dimensional subspace of $\RR^{n+1}$ not containing $\alpha$ such that $M_0 := \widetilde M_0 \cap \ZZ^{n+1} \cong \ZZ^n$ is a rank $n$ sublattice. Let $\mathbf a \in \ZZ^{n+1}$ be a representative of a generator of the quotient $\ZZ^{n+1} / M_0 \cong \ZZ$ and for each $k \in \ZZ$ define the affine lattice at height $k$ to be $M_k = k \mathbf a + M_0$, with corresponding affine space $\widetilde M_k = k\mathbf a + \widetilde M_0$. For $k = 1$ we write $M = M_1$ and $\widetilde M = \widetilde M_1$. The \emph{height} of a lattice vector $v \in \ZZ^{n+1}$ is the integer $\hgt(v) = k$ such that $v$ lies in the affine lattice $M_k$.

We write $\MR(G)$ for the representation ring of $G$, whose elements are formal $\ZZ$-linear combination of representations, where addition is given by direct sums and multiplication by tensor products. By an abuse of notation, we identify a representation $\rho : G \rightarrow \GL_{n+1}(\CC)$ with its class $[\rho] \in \MR(G)$ and also with its character $\chi := (g \mapsto \tr \rho(g))$. An element of $\MR(G)$ is \emph{effective} if it is the class of a genuine representation, that is, a nonnegative combination of irreducible characters. Since our group $G$ typically lies in $\GL_{n+1}(\ZZ)$, we will often write $\rho : G \rightarrow \GL_{n+1}(\ZZ)$ for the inclusion map and write $\rho \otimes \CC$ for the map $\rho$ where the entries of $\rho (g)$ are considered as elements of $\CC$.

Let $P \subseteq \widetilde M$ be an $n$-dimensional $G$-invariant lattice polytope, so the vertices of $P$ lie in the affine lattice $M$ and every group element $g$ satisfies $g(P) = P$. For $k \ge 0$, let $\chi(k) \in \MR(G)$ be the class of the permutation representation of $G$ acting on the lattice points $kP \cap M_k$, where $0P \cap M_0 = \{0\}$. The \emph{equivariant Ehrhart series} is
\[
\EES_P^G(t) = \sum_{k \ge 0}  \chi(k) t^k = \frac{\varphi[t]}{\det(I - t \rho)} = \varphi[t] \sum_{k \ge 0} \sym^k (\rho)\, t^k ,
\]
where $\rho\colon G \to \GL_{n+1}(\ZZ)$ is the natural representation as above and the denominator is the formal alternating sum of alternating representations $\det(I - t\rho) = \sum_i (-1)^i [\bigwedge^i \rho]\,t^i$. See \cite[Lemma~3.1]{Sta11} for the right-most equality. The power series $\varphi[t] \in \MR(G)[[t]]$ is the \emph{equivariant $H^*$-series}. We call $\varphi[t]$ \emph{polynomial} if it lies in $\MR(G)[t]$ and we say $\varphi[t]$ is \emph{effective} if each of its coefficients is effective.

Since the vector $\alpha$ is fixed by $G$ and lies outside the subspace $\widetilde M_0$, its image generates a finite-index subgroup of the quotient $\ZZ^{n+1}/M_0 \cong \ZZ$ and is fixed by the group. Since the only automorphisms of the integers are $\pm1$, the group acts trivially on that quotient. Hence the height map $\hgt$ is a $G$-invariant homomorphism, every affine lattice $M_k$ is $G$-invariant, and the lattice points of each dilate satisfy $kP \cap \ZZ^{n+1} = kP \cap M_k$.

\medskip\noindent\textbf{The Ehrhart ring.} Let $C_P = \RR_{\ge0}P$ be the cone over $P$ and consider the affine semigroup ring
\[
S_P := \CC[C_P \cap \ZZ^{n+1}] \quad \text{with grading} \quad \deg x^u = \hgt(u).
\]
Then the degree-$k$ part $(S_P)_k$ has basis $\{x^u \colon u \in kP \cap M_k\}$, with an action by $G$ via $g\cdot x^u = x^{g(u)}$. So the series $\EES_P^G(t)$ is the equivariant Hilbert series of the ring $S_P$. By Hochster's theorem \cite{Hoc72}, $S_P$ is a Cohen--Macaulay normal domain of Krull dimension $n+1$. A \emph{linear system of parameters} (l.s.o.p.) is a subspace $V \subseteq (S_P)_1$ of dimension $n+1$ such that $S_P/(V)$ is a finite-dimensional $\CC$-vector space. 
The action of $G$ preserves the grading, so $G$ acts on the degree-one part $(S_P)_1$. We say that a subspace $V \subseteq (S_P)_1$ is \emph{$G$-stable} if $g(V) = V$ for all $g \in G$, and write $[V] \in \MR(G)$ for the class of the resulting representation. In the case $G = \langle\sigma\rangle$, we write \emph{$\sigma$-stable} instead of $G$-stable.
Recall that the \emph{Ehrhart series} of $P$ is the generating function $\Ehr_P(t) = \sum_{k\ge0}\#(kP\cap M_k)\,t^k$, and that the \emph{$h^*$-polynomial} $h^*_P$ of $P$ is the numerator of the Ehrhart series
\[
\Ehr_P(t) \ =\ \frac{h^*_P(t)}{(1-t)^{n+1}}.
\]
By a well-known result of Stanley, $h^*_P$ is a polynomial of degree at most $n$ with nonnegative integer coefficients \cite{Sta80}. For a polynomial $f$ we write $[f(t)]_i$ for the coefficient of $t^i$ in $f$, and we also write $(h^*_P)_i = [h^*_P(t)]_i$. Since the Ehrhart ring is Cohen--Macaulay, a basis of $V$ forms a regular sequence \cite[Theorem~2.1.2]{BH}, so the Hilbert series of the quotient is $h^*_P$ \cite{Sta78}. Proposition~\ref{prop:lsop-eff} is the equivariant analogue.

Note that elements of the representation ring $\MR(G)$ are determined by their characters. The value $\chi(k)(g)$ is the number of lattice points of $kP$ fixed by $g$ \cite[Lemma~5.2]{Sta11}. If $g = e$ is the identity, then we recover the usual $h^*$-polynomial $\varphi[t](e) = h^*_P(t)$. Observe that if $\varphi[t]$ is effective then it is a polynomial \cite[Remark~2.13]{Sta23} because the dimension of the $i$-th coefficient (i.e., the coefficient's evaluation at $e$) of $\varphi[t]$ is the $i$-th coefficient of the polynomial $h^*_P$, which vanishes for $i > n$ and the only effective representation of dimension zero is zero.

\begin{conjecture}[Effectiveness Conjecture {\cite[Conjecture~1.2]{Sta23}}]\label{conj:eff}
If $\varphi[t]$ is polynomial, then $\varphi[t]$ is effective.
\end{conjecture}

Stapledon's original conjecture \cite[Conjecture~12.1]{Sta11} asked for more: if $\varphi[t]$ is polynomial then there exists a $G$-invariant nondegenerate polynomial. This stronger version is false, by the counterexample of Santos and Stapledon discussed in Section~\ref{sec:known}. The \emph{Effectiveness Conjecture}, as it is currently known, is Conjecture~\ref{conj:eff}. Examples~\ref{ex:counterexample} and \ref{ex:counterexample-c2} shows that it does not hold.

We note that Section~\ref{setup} is Stapledon's setup in homogeneous form. By writing the lattice as $\ZZ^{n+1} = M_0 \oplus \ZZ\mathbf a$, the group acts on $M$ by affine transformations. The lattice $\ZZ^{n+1}$ plays the role of Stapledon's lattice $M \oplus \ZZ$ and the equivariant $H^*$-series $\varphi[t]$ is the same. Conversely any affine action arises this way by taking the vector $\alpha = \sum_{g \in G}g(u)$ for a lattice point $u \in M$. A difference in notation is the extra $(1-t)$ in the denominator of the equivariant Ehrhart series \cite[Section~1]{Sta11}, but that factor is exactly the copy of the trivial representation $\langle \alpha \rangle_\CC$ that is \emph{factored out} in the affine case.

\subsection{Symmetries of order two}\label{sec:order-two}

The main result of this section is Proposition~\ref{prop:poly}, which gives an exact criterion for $\varphi[t]$ to be a polynomial. First, we fix our notation.

From now on the group $G = \langle \sigma \rangle$ has order two. By Reiner's classification of integral representations \cite{Rei57}, there is a matrix $B \in \GL_{n+1}(\ZZ)$ and nonnegative integers $a,b,c$ such that
\[
B \sigma B^{-1} =
\diag(\underbrace{1, \dots, 1}_{a}, \underbrace{-1, \dots, -1}_{b}, \underbrace{U, \dots, U}_c),
\qquad
U = \begin{pmatrix}
    0 & 1 \\
    1 & 0
\end{pmatrix}.
\]
We assume throughout that the involution $\sigma$ has this shape, replacing the vector $\alpha$, the subspace $\widetilde M_0$ and the polytope $P$ by their images under the matrix $B$. Note that this changes none of the invariants below. We call the triple $(a,b,c)$ the \emph{type} of $\sigma$ and define
\[
p := a+c, \qquad q := b+c,
\]
which are the dimensions of the $(+1)$ and $(-1)$-eigenspaces of $\sigma$. In particular $p$ and $q$ do not depend on $B$ and satisfy $p + q = n+1$, with $p \ge 1$ because $\alpha$ is fixed and $q \ge 1$ because $\sigma$ is nontrivial. As a representation, the natural representation decomposes as $\rho \otimes \CC \cong p\chi_0 + q\chi_1$, where the characters $\chi_0$ and $\chi_1$ are the trivial and the sign characters. The representation ring is $\MR(G) = \ZZ[\chi_1]/\langle\chi_1^2-1\rangle$ where $\chi_0 \cong 1 \in \MR(G)$ and
\[
\det(I - t\sigma) = (1-t)^{p}(1+t)^{q}, \qquad \det(I - te) = (1-t)^{n+1}.
\]

\medskip

\noindent \textbf{The fixed polytope.}
Let $\Fix(\sigma) \subseteq \RR^{n+1}$ be the fixed space and $\pi = \frac12(I + \sigma) : \RR^{n+1} \rightarrow \Fix(\sigma)$ be the projection onto $\Fix(\sigma)$. Define
\[
\Lambda := (\ZZ^{n+1})^\sigma = \ZZ^{n+1} \cap \Fix(\sigma)
\]
to be the fixed lattice, which has rank $p$. We call $\hgt(\Lambda) \subseteq \ZZ$ the \emph{height group} of the action. For any subset $S$ of $\RR^{n+1}$ we write $S^\sigma = S\cap\Fix(\sigma)$ for its $\sigma$-fixed part. For instance $\Lambda = (\ZZ^{n+1})^\sigma$. We fix some coordinate labels: $x_1, \dots, x_a$ corresponding to the $(+1)$-block of $\sigma$; $y_1, \dots, y_b$ for the $(-1)$-block; and $z_1, \dots, z_c, w_1, \dots, w_c$ for the $U$-block such that $\sigma$ swaps $z_j$ and $w_j$ for each $j$. The fixed space $\Fix(\sigma)$ is cut out by the equations $y_i = 0$ and $z_j = w_j$ and we have
\[
P^\sigma = P \cap \Fix(\sigma), \qquad \pi(x,y,z,w) = \big(x,\,0,\,\frac{z+w}{2},\,\frac{z+w}{2}\big).
\]
We write $N(k) := \#(kP^\sigma \cap \ZZ^{n+1})$ for the number of $\sigma$-fixed lattice points of $kP$, and recall that its Ehrhart series is $\Ehr_{P^\sigma}(t) := \sum_{k\ge0}N(k)t^k$. The two evaluations of the equivariant $H^*$-series are
\begin{equation}\label{eq:evals}
\varphi[t](e) = h^*_P(t)
\quad \text{and}\quad 
\varphi[t](\sigma) = (1-t)^{p}(1+t)^{q}\,\Ehr_{P^\sigma}(t).
\end{equation}

\begin{lemma}[Fixed polytope]\label{lem:fixed}
Let $\pi = \frac12(I+\sigma)$ as above.
\begin{enumerate}[label=(\alph*)]
    \item The fixed polytope satisfies $P^\sigma = \pi(P)$, and its vertices are of the form $\pi(v)$ for some vertex $v$ of $P$. In particular $2P^\sigma$ is a lattice polytope and $P^\sigma$ is a lattice polytope if $c = 0$.
    \item If a face $F$ of $P$ is $\sigma$-invariant, then $F^\sigma = \pi(F)$ is a nonempty face of $P^\sigma$ meeting the relative interior of $F$, and its dimension is $\dim F^\sigma = \dim(\Span_\RR F)^\sigma - 1$.
    \item The fixed polytope has dimension $\dim P^\sigma = p-1$, and every proper $\sigma$-invariant face $F$ of $P$ satisfies $\dim F^\sigma \le p-2$.
    \item The height group defined by $\hgt(\Lambda)$ is either $\ZZ$ or $2\ZZ$.
\end{enumerate}
\end{lemma}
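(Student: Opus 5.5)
The plan is to prove the four parts in order, using only linear algebra of $\pi = \frac12(I+\sigma)$ and the normal form of $\sigma$.

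\emph{Part (a).} Since $P$ is $\sigma$-invariant and convex, $\pi(x) = \frac12(x+\sigma x)\in P$ for every $x\in P$, and $\pi(x)$ is fixed. So $\pi(P)\subseteq P^\sigma$. Conversely, $\pi$ restricts to the identity on $P^\sigma$, so $P^\sigma\subseteq\pi(P)$. The image of a polytope under a linear map is the convex hull of the images of its vertices, so every vertex of $P^\sigma$ is $\pi(v)$ for some vertex $v$ of $P$. Since $2\pi = I+\sigma$ is integral, each vertex of $2P^\sigma$ is $v+\sigma v$, a lattice point. If $c=0$, then $\sigma$ is diagonal with entries $\pm1$, and $\pi$ simply zeroes the $y$-coordinates, so it is integral.

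\emph{Part (b).} Let $F$ be $\sigma$-invariant and choose a linear functional $\ell$ with $\ell\le 0$ on $P$ and $F=P\cap\{\ell=0\}$. The symmetrised functional $\ell+\ell\circ\sigma$ also cuts out $F$: it is nonpositive on $P$, and it vanishes at $x$ exactly when $\ell(x)=\ell(\sigma x)=0$, i.e.\ when $x\in F\cap\sigma^{-1}F=F$. Restricting this functional to $\Fix(\sigma)$ exhibits $F^\sigma=P^\sigma\cap\{\ell=0\}$ as a face of $P^\sigma$. For nonemptiness and the relative-interior claim, take $x\in\operatorname{relint}F$. Then $\sigma x\in\operatorname{relint}F$ by invariance, so $\pi(x)\in\operatorname{relint}F$ is a fixed point. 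The equality $F^\sigma=\pi(F)$ follows exactly as in (a).

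For the dimension in (b), let $L=\Span_\RR F$, a $\sigma$-invariant subspace. Its fixed part $L^\sigma=\pi(L)$ is spanned by $\pi(F)=F^\sigma$. The set $F^\sigma$ lies in the hyperplane $\hgt=1$ of $L^\sigma$, which does not pass through $0$. A polytope in such a hyperplane that spans $L^\sigma$ linearly has affine dimension $\dim L^\sigma-1$. This is the step to state carefully: spanning holds because $\pi$ is surjective from $L$ onto $L^\sigma$ and $F$ spans $L$.

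\emph{Part (c).} Apply (b) to $F=P$, which gives $L=\RR^{n+1}$ and $\dim P^\sigma=p-1$. For a proper invariant face $F$, let $L=\Span F$. Then $L^\sigma$ is a proper subspace of $\Fix(\sigma)$. Otherwise $F^\sigma$ would have dimension $p-1=\dim P^\sigma$, so the face $F^\sigma$ of $P^\sigma$ would equal $P^\sigma$ and contain $\pi$ of an interior point of $P$, which lies in $\operatorname{int}P$. That contradicts $F\ne P$. Hence $\dim F^\sigma\le p-2$.

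\emph{Part (d).} The set $\hgt(\Lambda)$ is a subgroup of $\ZZ$, and it is nonzero since $\alpha\in\Lambda$ has positive height. For any lattice vector $u$ of height $1$, the vector $u+\sigma u$ lies in $\Lambda$ and has height $2$, because $\hgt$ is $\sigma$-invariant. So $2\in\hgt(\Lambda)$, and the subgroup is either $\ZZ$ or $2\ZZ$.

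The main subtlety is the face argument in (b)–(c). Everything else is routine.
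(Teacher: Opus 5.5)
Your proof is correct and follows the paper's argument part by part: a supporting functional shows that $F^\sigma$ is a face, a $\sigma$-fixed point of $\operatorname{relint} F$ (your $\pi(x)$, the paper's average of the vertices) gives nonemptiness, the full-dimensional-face contradiction gives (c), and $u+\sigma u$ of height $2$ gives (d). The only difference is that you prove directly the two facts the paper cites from Stapledon: you get $P^\sigma=\pi(P)$ from convexity, and you get $\dim F^\sigma=\dim(\Span_\RR F)^\sigma-1$ from $\Span_\RR F^\sigma=\pi(\Span_\RR F)=(\Span_\RR F)^\sigma$ together with $F^\sigma$ lying at height one, so your version is self-contained.
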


\begin{proof}
(a) Since the group has order two, by \cite[Lemma~5.4]{Sta11}, we have that $P^\sigma = \pi(P)$ and its vertices are of the form $\pi(v)$ for some vertex $v$ of $P$. Each vertex $\pi(v)$ of the fixed polytope is the midpoint of the pair $v$ and $\sigma(v)$, so every coordinate of $\pi(v)$ lies in $\frac12\ZZ$. Thus $2P^\sigma$ is a lattice polytope. The final claim that $P^\sigma$ is a lattice polytope if $c=0$ follows directly from the formula for $\pi$.

(b) Let $F$ be a $\sigma$-invariant face of $P$. By viewing $F$ as a $\sigma$-invariant lattice polytope in its affine span $\aff(F)$, by part (a), we have $F^\sigma = \pi(F)$. Write
$F = P \cap \{\ell = \max_P\ell\}$ for a supporting linear functional $\ell : \RR^{n+1} \rightarrow \RR$. Since the face $F$ is $\sigma$-invariant, the involution $\sigma$ permutes the vertices of $F$, so the average of those vertices is a $\sigma$-fixed point lying in the relative interior of $F$. Hence $F^\sigma$ is nonempty and meets the relative interior of $F$. Each point $z$ of $F^\sigma$ lies in $P^\sigma$ and satisfies $\ell(z) = \max_P\ell$, so the maximum of $\ell$ over the smaller polytope $P^\sigma$ is again $\max_P\ell$. Thus $F^\sigma = P^\sigma \cap \{\ell = \max_P\ell\}$, and so $F^\sigma$ is
a face of $P^\sigma$. Finally, by \cite[Lemma~5.3]{Sta11}, it follows that 
$\dim F^\sigma = \dim(\aff(F)\cap\Fix(\sigma))$.
Since $F$ lies at height one,
its affine span is $\aff(F) = \Span_\RR F \cap \{\hgt = 1\}$. So
$\aff(F)\cap\Fix(\sigma) = (\Span_\RR F)^\sigma \cap \{\hgt = 1\}$ is a
nonempty affine hyperplane of $(\Span_\RR F)^\sigma$ and so
$\dim F^\sigma = \dim(\Span_\RR F)^\sigma - 1$.

(c) By assumption $P$ is full dimensional, so by taking $F = P$ in part (b), we obtain $\dim P^\sigma = \dim(\RR^{n+1})^\sigma - 1 = p-1$. Let $F$ be a $\sigma$-invariant proper face of $P$. If $\dim F^\sigma = p-1$, then $F^\sigma$ is a full-dimensional face of $P^\sigma$, hence $F^\sigma = P^\sigma$. By part (b) applied to the polytope $P$, the fixed polytope $P^\sigma$ meets the relative interior of $P$. Since $F^\sigma = P^\sigma$ and $F^\sigma \subseteq F$, the face $F$ meets the relative interior of $P$, which contradicts the assumption that $F$ is a proper face of $P$. So $\dim(F^\sigma) \le p-2$.

(d) For any $u \in M$, we have that $u + \sigma u$ lies in the fixed lattice $\Lambda$ and has height $2$, so $2\ZZ \subseteq \hgt(\Lambda) \subseteq \ZZ$, i.e., $\hgt(\Lambda)$ is either $\ZZ$ or $2\ZZ$.
\end{proof}

Both cases in part (d) occur in small examples. Let $P = [0,1]\times\{1\} \subseteq \RR^2$ and suppose that $G$ acts by $\sigma(x,h) = (h-x,h)$, which is the reflection swapping its vertices. Since $P^\sigma = \left\{\left(\frac 12, 1\right)\right\}$, we have $\Lambda = \ZZ\cdot(1,2)$, so every $\sigma$-fixed vector has even height and $\hgt(\Lambda) = 2\ZZ$. On the other hand, if some lattice point of $P$ is $\sigma$-fixed, then that point has height one, hence $\hgt(\Lambda) = \ZZ$. This happens for the diagonally reflected square in Example~\ref{ex:swap-square} below where $\sigma$ fixes the vertex $(0,0)$. Examples~\ref{ex:square} and \ref{ex:swap-square} below show the unit square under the action of two different reflections.

\begin{example}[Orthogonally reflected square]\label{ex:square}
Let $n = 2$, write $(x,y,h)$ for the coordinates of the lattice $\ZZ^3$ with $\widetilde M_0 = \{h=0\}$, define the fixed vector $\alpha = (1,0,2)$, and $\sigma$ act by $\sigma(x,y,h) = (h-x,\,y,\,h)$, which is a reflection of the height-one plane in the line $x = \frac12$. Take the polytope $P = [0,1]^2 \times \{1\}$. See the left side of Figure~\ref{fig:squares}. Here, we have $p = 2$, $q = 1$, and the type is $(1,0,1)$. Every fixed vector $(x,y,2x)$ has even height, so the height group is $\hgt(\Lambda) = 2\ZZ$. By Corollary~\ref{cor:poly}(iv), $\varphi[t]$ is not polynomial. Explicitly, we have
\[
\varphi[t](\sigma) = \frac{1+t^2}{1+t} = 1 - t + 2t^2 - 2t^3 + 2t^4 - \dots.
\]
\end{example}

\begin{example}[Diagonally reflected square]\label{ex:swap-square}
Consider the same setup as Example~\ref{ex:square} except with the fixed vector $\alpha = (0,0,1)$ and with $\sigma$ acting by $\sigma(x,y,h) = (y,x,h)$ on the same polytope $P = [0,1]^2\times\{1\}$. The new vector $\alpha$ is $\sigma$-fixed and lies outside $\widetilde M_0$, as the setup of Section~\ref{setup} requires. See the right side of Figure~\ref{fig:squares}. As before, we have $p = 2$, $q = 1$, and type $(1,0,1)$. In this case, the fixed polytope $P^\sigma$ is the lattice segment from $(0,0,1)$ to $(1,1,1)$. By Corollary~\ref{cor:lattice-fixed}, $\varphi[t]$ is effective, and explicitly $\varphi[t] = (1+t)\chi_0$. 
\end{example}

\begin{figure}[t]
\centering
\begin{tikzpicture}[scale=1.25]
  \begin{scope}
    \draw[gray!50] (-0.3,-0.3) grid[step=1] (1.3,1.3);
    \draw[very thick] (0,0) rectangle (1,1);
    \draw[dashed,thick] (0.5,-0.35) -- (0.5,1.35);
    \foreach \p in {(0,0),(1,0),(0,1),(1,1)} {\fill \p circle (0.055);}
    \node at (0.5,-0.75) {\small No lattice point of $P$ lies on $\Fix(\sigma)$};
    \node at (0.5,1.65) {\small Example~\ref{ex:square}};
  \end{scope}
  \begin{scope}[xshift=5cm]
    \draw[gray!50] (-0.3,-0.3) grid[step=1] (1.3,1.3);
    \draw[very thick] (0,0) rectangle (1,1);
    \draw[dashed,thick] (-0.35,-0.35) -- (1.35,1.35);
    \foreach \p in {(0,0),(1,0),(0,1),(1,1)} {\fill \p circle (0.055);}
    \node at (0.5,-0.75) {\small $P^\sigma$ is a lattice segment};
    \node at (0.5,1.65) {\small Example~\ref{ex:swap-square}};
  \end{scope}
\end{tikzpicture}
\caption{Two unit squares in Examples~\ref{ex:square} and \ref{ex:swap-square}. On the left $\varphi[t]$ is not a polynomial. On the right $\varphi[t]$ is effective.}
\label{fig:squares}
\end{figure}

The following straightforward observation gives us an exact criterion for effectiveness.
\begin{proposition}[Effectiveness of {$\varphi[t]$}]\label{prop:eff}
A class $c \in \MR(G)$ is effective if and only if $|c(\sigma)| \le c(e)$. In particular, $\varphi[t]$ is effective if and only if $|\varphi_i(\sigma)| \le (h^*_P)_i$ for all $i$, where $\varphi_i$ is the coefficient of $t^i$ in $\varphi[t]$.
\end{proposition}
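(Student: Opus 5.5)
The plan is to write every class in $\MR(G)$ in terms of the two irreducible characters and read off both directions from the resulting two linear equations. Since $\MR(G) = \ZZ[\chi_1]/\langle \chi_1^2 - 1\rangle$ is free of rank two, any $c \in \MR(G)$ can be written uniquely as $c = m_0\chi_0 + m_1\chi_1$ with $m_0, m_1 \in \ZZ$. Evaluating at $e$ and at $\sigma$ gives
\[
c(e) = m_0 + m_1, \qquad c(\sigma) = m_0 - m_1,
\]
so that $m_0 = \tfrac12\bigl(c(e)+c(\sigma)\bigr)$ and $m_1 = \tfrac12\bigl(c(e)-c(\sigma)\bigr)$. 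These are integers automatically, because $c(e)$ and $c(\sigma)$ have the same parity.

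By definition, $c$ is effective exactly when $m_0 \ge 0$ and $m_1 \ge 0$. Using the formulas above, this is the pair of inequalities $c(\sigma) \ge -c(e)$ and $c(\sigma) \le c(e)$, which together say $|c(\sigma)| \le c(e)$. The two conditions are literally the same statement, so the argument runs in both directions at once.

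For the second sentence, apply the first part to each coefficient $\varphi_i \in \MR(G)$ of $\varphi[t]$. By definition $\varphi[t]$ is effective precisely when every $\varphi_i$ is effective. By \eqref{eq:evals}, $\varphi_i(e) = (h^*_P)_i$, so the criterion becomes $|\varphi_i(\sigma)| \le (h^*_P)_i$ for all $i$.

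I expect no step to be a real obstacle. The only point needing care is identifying classes with characters, i.e.\ that $\chi_0, \chi_1$ form a $\ZZ$-basis of $\MR(G)$, and this is given by the presentation of $\MR(G)$ stated earlier.
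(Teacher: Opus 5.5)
Your proposal is correct and matches the paper's proof: decompose $c = m_0\chi_0 + m_1\chi_1$, solve for $m_0, m_1$ from the evaluations at $e$ and $\sigma$, and read off nonnegativity as $|c(\sigma)| \le c(e)$. You then apply this coefficientwise using $\varphi_i(e) = (h^*_P)_i$, as the paper does.
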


\begin{proof}
This result is used in \cite[Section~3]{CHK}. For completeness, we recall the proof.

Let $c$ be a class in $\MR(G)$ and recall that $\chi_0$ and $\chi_1$ are the trivial and sign characters of $G$. Write $c = m\chi_0 + m'\chi_1$ for integers $m$ and $m'$. The class $c$ is effective precisely when both $m$ and $m'$ are nonnegative. Evaluating the class at the two group elements gives $c(e) = m + m'$ and $c(\sigma) = m - m'$, so $m = \frac12(c(e)+c(\sigma))$ and $m' = \frac12(c(e)-c(\sigma))$. Hence both $m$ and $m'$ are nonnegative if and only if $|c(\sigma)| \le c(e)$, which is the first statement.

For the second statement, the series $\varphi[t]$ is effective exactly when each coefficient $\varphi_i$ is effective. The evaluation $\varphi[t](e) = h^*_P(t)$ gives $\varphi_i(e) = (h^*_P)_i$, so applying the first statement to each coefficient $\varphi_i$ shows that $\varphi[t]$ is effective if and only if $|\varphi_i(\sigma)| \le (h^*_P)_i$ for all $i$.
\end{proof}

\medskip
\noindent \textbf{Polynomiality.}
By Lemma~\ref{lem:fixed}(a) the dilate $2P^\sigma$ is a lattice polytope, so by Ehrhart's theorem for rational polytopes \cite[Theorem~4.6.8]{StaEC1} the function $N$ is a quasi-polynomial of period dividing two. So there are polynomials $\Ev, \Od \in \QQ[k]$ of degree at most $p-1$ such that:
\[
N(k) = \#(kP^\sigma \cap \ZZ^{n+1}) = \begin{cases}
    \Ev(k) & \text{if } k \text{ is even}, \\
    \Od(k) & \text{if } k \text{ is odd}.
\end{cases}
\]

\begin{proposition}[Polynomiality of {$\varphi[t]$}]\label{prop:poly}
With the notation above, $\varphi[t]$ is polynomial if and only if
$\deg(\Ev-\Od) \le q-1$,
where we take the degree of the zero polynomial to be $-\infty$. More precisely, the generating function $\Ehr_{P^\sigma}(t)$ has a pole of order $p$ at $t=1$ and no other poles except possibly at $t = -1$ where: its order is $1 + \deg(\Ev-\Od)$ if $\Ev \neq \Od$; and there is no pole if $\Ev = \Od$.
\end{proposition}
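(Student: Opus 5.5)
The approach is to decompose $N(k)$ into its even-symmetric and alternating parts, compute the poles of each generating function, and then read polynomiality off the evaluation $\varphi[t](\sigma) = (1-t)^p(1+t)^q\Ehr_{P^\sigma}(t)$ from \eqref{eq:evals}. Note first that $\varphi[t]$ is polynomial if and only if both evaluations $\varphi[t](e)$ and $\varphi[t](\sigma)$ are polynomials, since an element of $\MR(G)$ is determined by its character. The evaluation at $e$ is $h^*_P$, which is always a polynomial. So everything reduces to deciding when $(1-t)^p(1+t)^q\Ehr_{P^\sigma}(t)$ is a polynomial, which amounts to locating the poles of the rational function $\Ehr_{P^\sigma}(t)$.

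Write $A = \frac12(\Ev+\Od)$ and $D = \frac12(\Ev-\Od)$. Then
\[
N(k) = A(k) + (-1)^k D(k),
\qquad
\Ehr_{P^\sigma}(t) = \sum_{k\ge0}A(k)t^k + \sum_{k\ge0}D(k)(-t)^k .
\]
I will use the standard fact that for a polynomial $f$ of degree $d$, the series $\sum_k f(k)t^k$ is a rational function whose only pole is at $t=1$, of order exactly $d+1$ (write $f$ in the binomial basis $\binom{k+j}{j}$ and use that the leading coefficient is nonzero). Applying this with $t$ replaced by $-t$, the second sum has its only pole at $t=-1$, of order $\deg D + 1 = \deg(\Ev - \Od)+1$, and it vanishes identically when $\Ev = \Od$. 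The first sum has its only pole at $t=1$, of order $\deg A + 1$.

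It remains to show that $\deg A = p-1$. The leading term of $N(k)$ is $\Vol(P^\sigma)k^{p-1}$ in the lattice $\Lambda$, which is positive because $\dim P^\sigma = p-1$ by Lemma~\ref{lem:fixed}(c). Along both the even and the odd subsequences, $N(k)$ grows like a positive constant times $k^{p-1}$, so $\Ev$ and $\Od$ share the same leading coefficient in degree $p-1$. This gives $\deg A = p-1$, and also $\deg D \le p-2$. The main care is needed here: the volume argument applies to the rational polytope $P^\sigma$ with respect to $\Lambda$, and the slice at height $k$ should be handled via the counts $\#(k P^\sigma \cap \Lambda)$. Since the leading term of a rational polytope's Ehrhart quasi-polynomial is constant (equal to the relative volume), the two constituents agree in their top coefficient.

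Finally, multiply by $(1-t)^p(1+t)^q$. The order-$p$ pole at $t=1$ is cancelled. The pole at $t=-1$ is cancelled if and only if its order $1+\deg(\Ev-\Od)$ is at most $q$, and there is no pole there at all when $\Ev = \Od$. A rational function whose poles have all been cancelled is a polynomial, and conversely a polynomial has no poles. Hence $\varphi[t](\sigma)$ is a polynomial exactly when $\deg(\Ev-\Od)\le q-1$, which is the statement.
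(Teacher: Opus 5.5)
Your overall route is the same as the paper's: split $N(k)=A(k)+(-1)^kD(k)$, read off the pole orders of the two generating functions, and multiply by $(1-t)^p(1+t)^q$. Your final paragraph is correct. The problem is the third paragraph.

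The assertion that the leading coefficient of a rational polytope's Ehrhart quasi-polynomial is constant holds for full-dimensional polytopes, and more generally for index one. It fails here. $P^\sigma$ is a $(p-1)$-dimensional polytope whose points are counted against the rank-$p$ lattice $\Lambda$, and its affine span $\Fix(\sigma)\cap\{\hgt=1\}$ need not contain a point of $\Lambda$. When $\hgt(\Lambda)=2\ZZ$ (the second case of Lemma~\ref{lem:fixed}(d)), $kP^\sigma$ contains no lattice point for odd $k$, so $\Od\equiv 0$ and $\deg(\Ev-\Od)=p-1$. Example~\ref{ex:square} is a concrete instance: $p=2$, $\Ev(k)=k+1$ and $\Od(k)=0$. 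So three of your claims are false in general: that $N(k)$ grows like a positive multiple of $k^{p-1}$ along odd $k$, that the two constituents share a top coefficient, and that $\deg D\le p-2$. The last one even proves too much. With $p=q+1$ it would give $\deg(\Ev-\Od)\le q-1$, hence polynomiality, contradicting $\varphi[t](\sigma)=(1+t^2)/(1+t)$ in Example~\ref{ex:square}.

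The damage is contained. Your last paragraph never uses $\deg D\le p-2$, and the ``iff'' only needs $\deg A\le p-1$, which is automatic. What you do need, for the exact order $p$ of the pole at $t=1$, is $\deg A=p-1$. This is true, but for a different reason. The map $m\mapsto\Ev(2m)=\#(m\cdot 2P^\sigma\cap\Lambda)$ is the Ehrhart polynomial of the $(p-1)$-dimensional lattice polytope $2P^\sigma$, so $\Ev$ has degree $p-1$ with positive leading coefficient. Meanwhile $\Od$ has degree at most $p-1$, and its coefficient of $k^{p-1}$ is nonnegative because $\Od(k)=N(k)\ge 0$ for odd $k$. Hence nothing cancels in $\Ev+\Od$. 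This is precisely the paper's Lemma~\ref{lem:leading}, which treats the two height groups separately: equal leading coefficients when $\hgt(\Lambda)=\ZZ$, and $\Od=0$ when $\hgt(\Lambda)=2\ZZ$. With that substitution your proof is complete.
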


Example~\ref{ex:TxI} shows that $\Ev$ and $\Od$ may agree when $P^\sigma$ is not a lattice polytope. In that example the Ehrhart quasi-polynomial of $P^\sigma$ has period strictly smaller than the denominator of $P^\sigma$, a phenomenon called \emph{period collapse} \cite{MW05}; see Remark~\ref{rem:mcmullen}. The next lemma computes $\deg(\Ev-\Od)$ by taking cases on the height group $\hgt(\Lambda)$, which by Lemma~\ref{lem:fixed}(d) is either $\ZZ$ or $2\ZZ$. We will use this lemma to prove Proposition~\ref{prop:poly}.

\begin{lemma}\label{lem:leading}
The polynomial $\Ev$ has degree $p-1$ and positive leading coefficient, and the polynomial $\Od$ has degree at most $p-1$ and nonnegative leading coefficient,
hence $\deg(\Ev+\Od) = p-1$. If $\hgt(\Lambda)=\ZZ$, then $\Ev$ and $\Od$ have the same leading coefficient and satisfy $\deg(\Ev-\Od) \le p-2$. Otherwise, if
$\hgt(\Lambda) = 2\ZZ$, then $\Od$ is zero and $\deg(\Ev-\Od) = p-1$.
\end{lemma}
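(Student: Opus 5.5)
The plan is to compute the leading behaviour of $N(k)$ along even and odd $k$ separately by viewing $N(k)$ as a count of lattice points of the fixed lattice $\Lambda$ lying in a dilate of $P^\sigma$. Since $\Lambda$ has rank $p$ and $P^\sigma$ has dimension $p-1$ by Lemma~\ref{lem:fixed}(c), I would restrict the height map to $\Lambda$. Its kernel $\Lambda_0 := \Lambda \cap \widetilde M_0$ is a rank $p-1$ lattice in the $(p-1)$-dimensional space $\widetilde M_0^\sigma$. The $\sigma$-fixed lattice points of $kP$ are exactly $kP^\sigma \cap \Lambda_k$, where $\Lambda_k := \Lambda \cap M_k$ is a translate of $\Lambda_0$ when $k \in \hgt(\Lambda)$ and is empty otherwise.

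\textbf{Even $k$.} By Lemma~\ref{lem:fixed}(d), $2 \in \hgt(\Lambda)$. Write $Q := 2P^\sigma$, which is a lattice polytope by Lemma~\ref{lem:fixed}(a). Then
\[
\Ev(2m) = N(2m) = \#(mQ \cap \Lambda_{2m}).
\]
Choose $u \in \Lambda_2$. Translating by $-mu$ identifies $\Lambda_{2m}$ with $\Lambda_0$ and $mQ$ with $m(Q-u)$. The polytope $Q-u$ lies in $\widetilde M_0^\sigma$, has full dimension $p-1$ there, and has vertices in $\Lambda_0$; the last point needs care, since the vertices $v+\sigma v$ of $Q$ lie in $\Lambda_2$. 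Ehrhart's theorem for lattice polytopes then gives that $\Ev(2m)$ is a polynomial in $m$ of degree exactly $p-1$, with leading coefficient equal to the normalised volume of $Q-u$ relative to $\Lambda_0$, which is positive. So $\Ev$ has degree $p-1$ and positive leading coefficient.

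\textbf{Odd $k$.} If $\hgt(\Lambda)=2\ZZ$, then $\Lambda_k=\emptyset$ for odd $k$, so $\Od=0$ and $\deg(\Ev-\Od)=p-1$. If $\hgt(\Lambda)=\ZZ$, pick $w \in \Lambda_1$. Then
\[
N(2m+1) = \#\big((2m+1)P^\sigma \cap \Lambda_{2m+1}\big),
\]
and after translating by $-(2m+1)w$ this counts points of $\Lambda_0$ in the rational polytope $(2m+1)(P^\sigma - w)$. The standard fact I would invoke is that the leading coefficient of the Ehrhart quasi-polynomial of a full-dimensional rational polytope is constant on all residue classes and equals the relative volume, e.g.\ \cite[Theorem~4.6.8]{StaEC1} together with the volume interpretation of the leading term. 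Thus $\Ev$ and $\Od$ share the leading coefficient of $k^{p-1}$, namely $\Vol(P^\sigma)$ relative to $\Lambda_0$. Hence $\deg(\Ev-\Od)\le p-2$ and $\Od$ has degree $p-1$. In both cases $\Od$ has nonnegative leading coefficient and $\Ev$ has positive leading coefficient, so $\deg(\Ev+\Od)=p-1$.

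The main obstacle is making the translation to $\Lambda_0$ rigorous. One must check that $\Lambda_{k}$ is a single coset of $\Lambda_0$, that the volume normalisation is the same for both parities, and, in the even case, that the vertices of $Q$ really lie in the coset $\Lambda_2$. An alternative that avoids the cosets is to argue directly that $N(k)/k^{p-1}$ tends, for each parity class in $\hgt(\Lambda)$, to the relative volume of $P^\sigma$ with respect to $\Lambda_0$. This follows by a Riemann-sum argument, since $\Lambda_k$ is a translate of $\Lambda_0$ and $kP^\sigma$ is a dilate of a full-dimensional body in $\widetilde M_k^\sigma$. That limit statement gives all the leading-coefficient claims at once.
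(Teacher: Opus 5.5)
Your proposal is correct and is essentially the paper's argument. You read $\Ev$ off the Ehrhart polynomial of the lattice polytope $2P^\sigma$, and the case $\hgt(\Lambda)=2\ZZ$ forces $\Od=0$. In the case $\hgt(\Lambda)=\ZZ$, the two constituents share a leading coefficient by the standard fact that the leading coefficient of an Ehrhart quasi-polynomial is the relative volume whenever the index divides $k$; the paper phrases this via the index of $P^\sigma$, and your translation by $w\in\Lambda_1$ just makes it explicit.

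The coset issue you flag as the main obstacle is immediate: $\hgt$ restricted to $\Lambda$ is a homomorphism, so every nonempty $\Lambda_k$ is a coset of $\Lambda_0$, and $v+\sigma(v)-u\in\Lambda_0$ for every vertex $v$ of $P$ and every $u\in\Lambda_2$. One small slip, which does not affect positivity: the leading coefficient is the relative volume, not the normalised volume.
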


The following example illustrates Proposition~\ref{prop:poly} and Lemma~\ref{lem:leading} for the two order-two actions on the unit square.

\begin{example}\label{ex:poly-in-action}
Consider the orthogonally reflected square in Example~\ref{ex:square}. The fixed points of $kP$ are $(\frac k2, y, k)$ with $0 \le y \le k$, which are lattice points only for even $k$, so $\Ev(k) = k+1$ and $\Od(k) = 0$. Here, the height group is $2\ZZ$ and $\deg(\Ev-\Od) = 1 = p-1$, following Lemma~\ref{lem:leading}. Since $q - 1$ is zero, Proposition~\ref{prop:poly} tells us that $\varphi[t]$ is not polynomial, and indeed
\[
\Ehr_{P^\sigma}(t) = \sum_{m\ge0}(2m+1)t^{2m} = \frac{1+t^2}{(1-t^2)^2}, \qquad
\varphi[t](\sigma) = (1-t)^2(1+t)\Ehr_{P^\sigma}(t) = \frac{1+t^2}{1+t}.
\]
For the diagonally reflected square in Example~\ref{ex:swap-square}, the fixed polytope $P^\sigma$ is a lattice segment. We have $N(k) = k+1$ for all $k$, so $\Ev(k) = \Od(k) = N(k)$, and the evaluation $\varphi[t](\sigma) = (1-t)^2(1+t)/(1-t)^2 = 1+t$ is a polynomial.
\end{example}

\begin{proof}[Proof of Lemma~\ref{lem:leading}]
We first establish the degrees and the leading coefficients of the two constituents, and then compare those leading coefficients in the two cases for the height group.

By definition $N(k)$ counts the $\sigma$-fixed lattice points of $kP$, so $N(k) = \#(kP^\sigma\cap\Lambda)$.
By Lemma~\ref{lem:fixed}(a) and (c), the dilate $2P^\sigma$ is a lattice polytope of dimension $p-1$, so its vertices lie in $\Lambda$. It follows that $N$ is a quasi-polynomial of period dividing $2$ \cite[Theorem~4.6.8]{StaEC1}, so the polynomials $\Ev$ and $\Od$ are well-defined. Moreover, the function $m \mapsto \Ev(2m) = \#(m\cdot 2P^\sigma \cap \Lambda)$ is the Ehrhart polynomial of $2P^\sigma$, which has degree $p-1$. Thus $\Ev(k)$ has degree $p-1$ and its leading coefficient is the relative volume $\nu$ of the fixed polytope $P^\sigma$ with respect to the sublattice $\Lambda \cap \{\hgt = 0\}$, which is positive. The polynomial $\Od$ has degree at most $p-1$ by its definition above, and its leading coefficient is nonnegative because $\Od(k) = N(k) \ge 0$ for all $k$ odd and large enough. So the sum $\Ev+\Od$ has positive leading coefficient in degree $p-1$, that is, $\deg(\Ev+\Od) = p-1$. That proves the first part of the lemma.

It remains to compare the leading coefficients of $\Ev$ and $\Od$, which we do using the index of the fixed polytope $P^\sigma$. Recall that the index of a rational polytope $Q$ is the least positive integer $m$ such
that the affine span of $mQ$ contains a lattice point. The affine span of $P^\sigma$ lies at height one inside $\Fix(\sigma)$, so the index of
$P^\sigma$ is one if and only if the height group is all of $\ZZ$, and the index is two when $\hgt(\Lambda) = 2\ZZ$. By Lemma~\ref{lem:fixed}(d), these are the only cases for the index.
Let $c(k)$ be the leading coefficient of the Ehrhart quasi-polynomial $N$. Then $c(k)$ is equal to the relative volume $\nu$ of $P^\sigma$ when the index divides $k$ and is zero otherwise. See, for example, \cite[Section~2]{Sta11}.

Suppose first that $\hgt(\Lambda) = \ZZ$. Then the index is one, so both constituents $\Ev$ and $\Od$ have leading coefficient $\nu$. Hence the difference satisfies $\deg(\Ev-\Od) \le p-2$. Suppose instead that $\hgt(\Lambda) = 2\ZZ$. Then every $\sigma$-fixed lattice vector has even height, so the dilate $kP^\sigma$ contains no lattice point for odd $k$ and the polynomial $\Od$ is zero. Hence $\deg(\Ev-\Od) = \deg\Ev = p-1$. So the two cases for the height group give the two remaining claims of the lemma.
\end{proof}

With this description of the degrees and leading coefficients, we prove Proposition~\ref{prop:poly}.

\begin{proof}[Proof of Proposition~\ref{prop:poly}]
Note that the evaluation of $\varphi[t]$ at the identity $e \in G$ is always a polynomial. So $\varphi[t]$ is a polynomial if and only if $\varphi[t](\sigma)$ is a polynomial. We first compute the poles of the generating function $\Ehr_{P^\sigma}(t)$, and apply Equation~\eqref{eq:evals} when $\varphi[t](\sigma)$ is a polynomial.

Observe that $N(k) = \frac12(\Ev+\Od)(k) + \frac12(-1)^k(\Ev-\Od)(k)$, so we have
\[
\Ehr_{P^\sigma}(t) = \sum_{k\ge0}\frac12(\Ev+\Od)(k)t^k
                   + \sum_{k\ge0}\frac12(\Ev-\Od)(k)(-t)^k .
\]
In general, if $f$ is a polynomial of degree $d \ge 0$ with leading coefficient $\lambda$, then its generating function is given by $\sum_{k\ge0}f(k)t^k = g(t)/(1-t)^{d+1}$ where $g$ is a polynomial satisfying $g(1) = d!\cdot\lambda\ne0$. By Lemma~\ref{lem:leading}, the sum $\Ev+\Od$ has degree
$p-1$, so the first sum of $\Ehr_{P^\sigma}(t)$ above is given by $g_+(t)/(1-t)^p$ where $g_+(1)\ne0$.

We now take two cases, according to whether the two constituents agree. Suppose first that $\Ev \neq \Od$, and define $d := \deg(\Ev-\Od)$. Then the second sum of $\Ehr_{P^\sigma}(t)$ is given by $g_-(-t)/(1+t)^{d+1}$ where $g_-(1)\ne0$. So the generating function $\Ehr_{P^\sigma}(t)$ has a pole of order $p$ at $t = 1$ and a pole of order $1+d$ at $t = -1$.
By Equation~\eqref{eq:evals} we have
\[
\varphi[t](\sigma) = (1+t)^q g_+(t) + (1-t)^p(1+t)^{q-d-1}g_-(-t).
\]
The first summand $(1+t)^qg_+(t)$ is a polynomial, so polynomiality of $\varphi[t](\sigma)$ is completely determined by the second summand. Evaluating at $t = -1$ shows that the numerator $(1-t)^pg_-(-t)$ does not vanish there. So the second summand is a polynomial exactly when the exponent $q-d-1$ is nonnegative, that is, exactly when $d \le q-1$.

Suppose instead that $\Ev = \Od$. Then the second sum of $\Ehr_{P^\sigma}(t)$ vanishes, so $\Ehr_{P^\sigma}(t) = g_+(t)/(1-t)^p$ has a pole of order $p$ at $t = 1$ and no pole at $t = -1$. The evaluation is $\varphi[t](\sigma) = (1+t)^qg_+(t)$, which is always polynomial, and the difference $\Ev-\Od$ is the zero polynomial, of degree $-\infty \le q-1$.

In both cases the poles of $\Ehr_{P^\sigma}(t)$ are as described in the second statement, and $\varphi[t]$ is polynomial if and only if $\deg(\Ev-\Od) \le q-1$.
\end{proof}

Using these results, we obtain the following explicit cases for which $\varphi[t]$ is polynomial.

\begin{corollary}[Cases where {$\varphi[t]$} is polynomial]\label{cor:poly}
We have the following.
\begin{enumerate}[label=(\roman*)]
    \item If $p \le q$, then $\varphi[t]$ is polynomial.
    \item If $P^\sigma$ is a lattice polytope then: $N$ is a polynomial, $\varphi[t]$ is polynomial, and the equivariant $H^*$-series evaluated at $\sigma$ is $\varphi[t](\sigma) = (1+t)^q h^*_{P^\sigma}(t)$.
    \item If $p = q+1$, then $\varphi[t]$ is polynomial if and only if $\hgt(\Lambda)=\ZZ$.
    \item If $\hgt(\Lambda)=2\ZZ$, then $\varphi[t]$ is polynomial if and only if $p \le q$.
\end{enumerate}
\end{corollary}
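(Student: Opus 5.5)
Each of the four parts follows from Proposition~\ref{prop:poly}, which reduces polynomiality of $\varphi[t]$ to the inequality $\deg(\Ev-\Od) \le q-1$, combined with the degree computations of Lemma~\ref{lem:leading}. The plan is to bound $\deg(\Ev-\Od)$ in each case and compare it with $q-1$.

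For (i), use the a priori bound $\deg(\Ev-\Od) \le p-1$, which holds because both constituents have degree at most $p-1$. If $p \le q$, then $p-1 \le q-1$, so Proposition~\ref{prop:poly} gives polynomiality.

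For (ii), suppose $P^\sigma$ is a lattice polytope. Its vertices then lie in $\Lambda$, and $N(k) = \#(kP^\sigma \cap \Lambda)$ is the Ehrhart polynomial of $P^\sigma$ with respect to $\Lambda$. Hence $\Ev = \Od$, and $\varphi[t]$ is polynomial by Proposition~\ref{prop:poly}. Since $\dim P^\sigma = p-1$ by Lemma~\ref{lem:fixed}(c), Stanley's theory gives
\[
\Ehr_{P^\sigma}(t) = \frac{h^*_{P^\sigma}(t)}{(1-t)^p}.
\]
Substituting this into Equation~\eqref{eq:evals} cancels the factor $(1-t)^p$ and leaves $\varphi[t](\sigma) = (1+t)^q h^*_{P^\sigma}(t)$. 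The only point needing care is that $h^*_{P^\sigma}$ is computed in the ambient lattice $\Lambda$, viewing $P^\sigma$ inside the rank-$p$ lattice at height one. If $P^\sigma$ has a lattice vertex, then $\hgt(\Lambda)=\ZZ$, so this setting is the same as the paper's homogeneous setup.

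For (iii) and (iv), split by the height group using Lemma~\ref{lem:leading}.

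If $\hgt(\Lambda) = \ZZ$, then $\deg(\Ev-\Od) \le p-2$. When $p = q+1$ this bound equals $q-1$, so $\varphi[t]$ is polynomial.

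If $\hgt(\Lambda) = 2\ZZ$, then $\deg(\Ev-\Od) = p-1$ exactly. In that case $\varphi[t]$ is polynomial if and only if $p-1 \le q-1$, that is, $p \le q$. This proves (iv).

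It also gives the converse direction of (iii): when $p = q+1$ we have $p > q$, so a height group of $2\ZZ$ rules out polynomiality. Since Lemma~\ref{lem:fixed}(d) says $\ZZ$ and $2\ZZ$ are the only possible height groups, this completes (iii).

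No step is a real obstacle. The only thing to check carefully is the identification in (ii) of $N$ with the honest Ehrhart polynomial in the lattice $\Lambda$.
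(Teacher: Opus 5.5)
Your proposal is correct and matches the paper's proof essentially step for step: each part compares $\deg(\Ev-\Od)$, as controlled by Lemma~\ref{lem:leading}, against $q-1$ via Proposition~\ref{prop:poly}, and part (ii) uses $\Ev=\Od$ together with $\Ehr_{P^\sigma}(t)=h^*_{P^\sigma}(t)/(1-t)^p$ substituted into Equation~\eqref{eq:evals}. Your extra remark that $h^*_{P^\sigma}$ is computed in the lattice $\Lambda$ is a harmless clarification that the paper leaves implicit.
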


\begin{proof}
(i) Suppose that $p \le q$. By Lemma~\ref{lem:leading}, we have $\deg(\Ev-\Od)\le p-1 \le q-1$, so $\varphi[t]$ is polynomial by Proposition~\ref{prop:poly}.

(ii) Suppose that $P^\sigma$ is a lattice polytope. Then the function $N$ is the Ehrhart polynomial of $P^\sigma$, so $N$ is a polynomial and the two constituents agree, $\Ev = \Od$. By Proposition~\ref{prop:poly}, the series $\varphi[t]$ is polynomial. The Ehrhart series is $\Ehr_{P^\sigma}(t) = h^*_{P^\sigma}(t)/(1-t)^p$, so Equation~\eqref{eq:evals} gives $\varphi[t](\sigma) = (1+t)^qh^*_{P^\sigma}(t)$.

(iii) Let $p = q+1$. If $\hgt(\Lambda) = \ZZ$, then by Lemma~\ref{lem:leading} we have $\deg(\Ev-\Od)\le p-2 = q-1$ and so $\varphi[t]$ is polynomial by Proposition~\ref{prop:poly}. If $\hgt(\Lambda) = 2\ZZ$, then $\deg(\Ev-\Od) = p-1 = q > q-1$ so $\varphi[t]$ is not polynomial by the same proposition. 

(iv) Let $\hgt(\Lambda) = 2\ZZ$. By Lemma~\ref{lem:leading}, we have $\deg(\Ev-\Od) = p-1$. By Proposition~\ref{prop:poly}, it follows that $\varphi[t]$ is polynomial if and only if $p-1 \le q-1$, that is, if and only if $p \le q$.
\end{proof}

\begin{remark}\label{rem:poly-lit}
Let us relate these results with some of those in the literature.
Part (ii) of Corollary~\ref{cor:poly} is a special case of \cite[Lemma~7.1]{Sta11} where Stapledon's proof gives the formula $\varphi[t](g) = h^*_{P^g}(t)\det(I-\rho(g)t)_{(M^g)^\perp}$ which specialises to the formula in the final part of (ii). The non-polynomiality portions of (iii) and (iv) are given by \cite[Lemma~7.3]{Sta11}. In that lemma, the requirement for non-polynomiality is $\dim P^g > (d-r+1)/r$ where $r$ the \emph{index} of the fixed polytope $P^g$: the smallest positive integer $r$ such that the affine span of $rP^g$ contains a lattice point. In our case, this condition is exactly $p > q$. 
Lemma~\ref{lem:leading} uses the standard result that the leading coefficient of the Ehrhart quasi-polynomial is the volume of the polytope when its index divides the dilation factor and is zero otherwise, see for example \cite[Section~2]{Sta11} and \cite[Corollary~5.11 and Remark~5.12]{Sta11}. 
\end{remark}

\begin{remark}\label{rem:dim2}
For lattice polygons under an order-two symmetry, Corollary~\ref{cor:poly} recovers the criterion \cite[Proposition~3.25]{Sta23}. In that more general setting, the equivariant $H^*$ series is non-polynomial if and only if some group element acts, under suitable coordinates, by either a quarter turn about the point $(\frac12,\frac12)$ or a reflection in the line $\{x = \frac12\}$. Since our group has order two, only the second occurs and it is exactly when $\hgt(\Lambda)=2\ZZ$ with $p = 2 > 1 = q$. See Example~\ref{ex:square}.
\end{remark}

\begin{remark}[Odd faces and period collapse]\label{rem:mcmullen}
Call a face $\Gamma$ of $P^\sigma$ \emph{odd} if its affine span $\aff(\Gamma)$ contains no point of the lattice $\ZZ^{n+1}$. Let $k_{\mathrm{odd}}$ be the largest dimension of an odd face. By McMullen's bounds on the periods of the coefficients of an Ehrhart quasi-polynomial \cite{McM78}, the degree bound $\deg(\Ev-\Od) \le k_{\mathrm{odd}}$ holds. So, if $k_{\mathrm{odd}} \le q-1$, then $\varphi[t]$ is polynomial. However, the converse fails, because the contributions of different odd faces may cancel. This phenomenon is period collapse \cite{MW05}, and occurs in Example~\ref{ex:TxI}. Thus it does not seem feasible to obtain a classification of polynomiality of $\varphi[t]$ in terms of the faces of $P^\sigma$.
\end{remark}

\section{Stapledon's hierarchy}\label{sec:known}

In this section, we recall the following chain of conditions, which we call \emph{Stapledon's hierarchy} and examine known cases of the Effectiveness Conjecture.

\begin{theorem}[{\cite[Theorem~1.4]{Sta23}}; $(2)\Rightarrow(4)$ is {\cite[Theorem~7.7]{Sta11}}]\label{thm:stapledon}
Fix the setup of Section~\ref{setup}, with $S_P$ the Ehrhart ring of the polytope $P$, and consider the conditions:
\begin{enumerate}[label=(\arabic*)]
    \item $P$ has a $G$-invariant regular lattice triangulation;
    \item there is a $G$-invariant polynomial $f \in \CC[M]$ that is nondegenerate with respect to $P$;
    \item the ring $S_P$ has a $G$-stable l.s.o.p.\ $V$ whose class satisfies $[V] = [\rho\otimes\CC]$ in the representation ring $\MR(G)$;
    \item $\varphi[t]$ is effective;
    \item $\varphi[t]$ is polynomial.
\end{enumerate}
Then the implications $(1)\Rightarrow(2)\Rightarrow(3)\Rightarrow(4)\Rightarrow(5)$ hold. In addition, if $P$ has a $G$-invariant lattice triangulation (regular or not) then condition (4) is satisfied.
\end{theorem}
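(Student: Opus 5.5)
This theorem is a citation of Stapledon's results. The plan is therefore to outline why each implication holds, using the setup of Section~\ref{setup}, and to prove the final two links $(3)\Rightarrow(4)\Rightarrow(5)$ in full, since they are elementary.

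\emph{The link $(4)\Rightarrow(5)$.} This was already observed above. The coefficient $\varphi_i$ evaluated at $e$ equals $(h^*_P)_i$, which vanishes for $i>n$. An effective class of dimension zero is zero, so every coefficient beyond degree $n$ vanishes and $\varphi[t]$ is a polynomial.

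\emph{The link $(3)\Rightarrow(4)$.} Let $V\subseteq (S_P)_1$ be a $G$-stable l.s.o.p.\ with $[V]=[\rho\otimes\CC]$. The steps are as follows.
\begin{enumerate}[label=(\roman*)]
\item Since $S_P$ is Cohen--Macaulay by Hochster's theorem, a basis of $V$ is a regular sequence. The Koszul complex $K(V;S_P)=S_P\otimes\bigwedge^\bullet V$ is therefore a $G$-equivariant graded resolution of $S_P/(V)$.
\item The Koszul differentials are $G$-equivariant because $V$ is $G$-stable.
\item Taking graded characters gives
\[
\mathrm{Hilb}^G_{S_P/(V)}(t)=\EES^G_P(t)\sum_i(-1)^i[\textstyle\bigwedge^iV]\,t^i=\EES^G_P(t)\det(I-t\rho),
\]
using $[V]=[\rho\otimes\CC]$.
\end{enumerate}
The right-hand side equals $\varphi[t]$, while the left-hand side has genuine representations as coefficients, so $\varphi[t]$ is effective.

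\emph{The link $(2)\Rightarrow(3)$.} Here we follow Stapledon \cite[Theorem~7.7]{Sta11}. A nondegenerate $G$-invariant $f=\sum_{u} a_u x^u$ produces the elements $x_0\partial f/\partial x_j$ in degree one, where $x_0$ is the homogenising variable. These span a $G$-stable subspace isomorphic to the dual of $\rho$ twisted appropriately, which for order-two (real) representations is $[\rho\otimes\CC]$. Nondegeneracy, meaning that no face restriction of $f$ has a common zero of its logarithmic partials on the torus, is exactly the Kouchnirenko-style criterion for these $n+1$ elements to be an l.s.o.p. One checks this face by face via the Stanley--Reisner/face-ring criterion: a set of linear forms is an l.s.o.p.\ if and only if its restriction to each face ring has finite-length quotient.

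\emph{The link $(1)\Rightarrow(2)$.} Take a $G$-invariant regular triangulation with $G$-invariant height function $\omega$. Set $f=\sum_{u\in P\cap M} \lambda^{\omega(u)} x^u$ with a generic parameter, or average a generic coefficient vector over $G$. Nondegeneracy then reduces to the simplices of the triangulation, where it is automatic. The main obstacle is ensuring genericity survives $G$-invariance. I would handle this by choosing coefficients constant on $G$-orbits and appealing to the fact that nondegeneracy is a Zariski-open condition that is nonempty on the invariant subspace, which the regular triangulation guarantees.

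\emph{The final claim.} For a not-necessarily-regular invariant triangulation, I would pass to the toric face ring of the triangulation. This ring has the same equivariant Hilbert series as $S_P$ and is Cohen--Macaulay. Stapledon's argument produces an invariant l.s.o.p.\ there via a generic invariant choice of linear forms on each simplex, and the Koszul argument above then gives effectiveness.
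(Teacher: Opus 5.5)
Your proposal is correct and follows the paper. The paper proves this theorem only by citing Stapledon, and it argues just the two elementary links itself, in the same way you do: $(4)\Rightarrow(5)$ through the dimension-zero observation in Section~\ref{setup}, and $(3)\Rightarrow(4)$ in Proposition~\ref{prop:lsop-eff}, which rests on the same Cohen--Macaulay regular-sequence and Koszul computation.

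One small correction to your sketch of $(2)\Rightarrow(3)$. The span of the logarithmic derivatives $\partial_w F$ of $F=x_0 f$ is a copy of the dual $\rho^\ast$, with no twist. Moreover, $[\rho^\ast]=[\rho\otimes\CC]$ holds for every finite $G\subseteq\GL_{n+1}(\ZZ)$, because the character of $\rho$ is integer-valued; this is not special to groups of order two.
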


If $n = 2$, then all five conditions are equivalent \cite[Proposition~3.25]{Sta23}. The converse $(2)\Rightarrow(1)$ is false \cite[Example~4.35]{Sta23}, and so is $(3)\Rightarrow(2)$ \cite[Examples~4.37--4.39]{Sta23}. The last of those three examples is the counterexample of Santos and Stapledon, see \cite[Theorem~1.2]{EKS}, which disproved the original three-condition form of the conjecture. 

Example~\ref{ex:counterexample} shows that $(5)$ does not imply $(4)$, and Example~\ref{ex:TxI} below shows that $(4)$ does not imply $(3)$. Hence the chain of implications in Theorem~\ref{thm:stapledon} is strict at each step.

\medskip

\noindent \textbf{The toric face ring.}
Stapledon asks about a weakening of condition (3) in which the Ehrhart ring is replaced by the toric face ring of an invariant subdivision. Let us recall its construction. A \emph{lattice polyhedral subdivision} $\mathcal S$ of $P$ is a polyhedral subdivision of $P$ each of whose cells is a lattice polytope, and $\mathcal S$ is \emph{proper} if $P$ is not itself a cell of $\mathcal S$. For a cell $\Gamma$ of $\mathcal S$, write $C_\Gamma = \RR_{\ge0}\Gamma$ for the cone over $\Gamma$. Since the cells of $\mathcal S$ cover $P$, every point of the cone $C_P$ lies in $C_\Gamma$ for some cell $\Gamma$. The \emph{toric face ring} of $\mathcal S$ is the $\CC$-vector space
\[
S_{\mathcal S} \ :=\ \bigoplus_{u \in C_P \cap \ZZ^{n+1}} \CC\,x^u ,
\qquad \text{with grading} \quad \deg x^u = \hgt(u),
\]
equipped with the multiplication
\[
x^u \cdot x^v \ =\
\begin{cases}
x^{u+v} & \text{if } u, v \in C_\Gamma \text{ for some cell } \Gamma \text{ of } \mathcal S,\\
0 & \text{otherwise.}
\end{cases}
\]
Toric face rings are studied in \cite{IR07}, see also \cite[Section~4.5]{Sta23}. The underlying graded vector space of $S_{\mathcal S}$ coincides with the Ehrhart ring $S_P$, and they coincide as rings when $P$ is the only maximal cell of $\mathcal S$. A group acting on $P$ and preserving $\mathcal S$ permutes the cells, hence acts on $S_{\mathcal S}$ by $g\cdot x^u = x^{g(u)}$, which preserves the grading. So the notions of a $G$-stable subspace of $(S_{\mathcal S})_1$ and of an l.s.o.p.\ of $S_{\mathcal S}$ follow similarly as in Section~\ref{setup}.

\begin{question}[{\cite[Question~4.33]{Sta23}}]\label{q:433}
Consider the following weakening of condition (3) of Theorem~\ref{thm:stapledon}:
\begin{itemize}[]
    \item[(3$'$)] there is a $G$-invariant lattice polyhedral subdivision $\mathcal S$ of $P$ whose toric face ring $S_{\mathcal S}$ has a $G$-stable l.s.o.p.\ $V \subseteq (S_{\mathcal S})_1$ with class $[V] = [\rho\otimes\CC]$ in the representation ring $\MR(G)$.
\end{itemize}
Condition (3) implies condition (3$'$), and condition (3$'$) implies condition (4). To what extent are the implications $(3)\Rightarrow(3')\Rightarrow(4)\Rightarrow(5)$ reversible?
\end{question}

We have stated the question in the notation of Theorem~\ref{thm:stapledon}; the two implications recorded in it are \cite[Corollary~4.32]{Sta23}. Stapledon notes that the question has a positive answer in every example of \cite{Sta23} apart from \cite[Example~2.16]{Sta23}, where the answer is not known. Corollary~\ref{cor:q433} shows that condition (4) does not imply condition (3$'$), which settles the implication $(4)\Rightarrow(3')$ of Question~\ref{q:433} in the negative for groups of order two. The reversibility of $(3')\Rightarrow(3)$ remains open.

\medskip

\noindent \textbf{Warning.} 
Stapledon asserts in \cite[Corollary~7.10]{Sta11} that $\varphi[t]$ is effective if every face $F$ of $P$ with $\dim(F) > 1$ contains a $G_F$-fixed lattice point, where the \emph{stabiliser} $G_F$ is the subgroup of elements $g \in G$ with $g(F) = F$. The erratum in \cite{Sta23} reports that the base-point-freeness criterion \cite[Remark~7.8]{Sta11} required by that corollary is false, as \cite[Example~3.23]{Sta23} shows. To solve this problem, we consider the orbits of lattice points as orbits of the whole group instead of the stabiliser. As a consequence, the proof of \cite[Corollary~7.10]{Sta11} is incomplete and, as far as we are aware, no proof or counterexample is known. We note that the reworded version \cite[Corollary~2.10]{EKS} is false by the same example.

\subsection{Known cases}
For a group $G = \langle \sigma \rangle$ of order two, Conjecture~\ref{conj:eff} holds in the following cases:
\begin{itemize}[]
    \item if the dimension $n \le 2$ \cite[Corollary~6.7]{Sta11}, 
    \item if $P$ is a simplex, then the coefficients of $\varphi[t]$ are permutation representations \cite[Proposition~6.1]{Sta11},
    \item if $P = [0,1]^n$ is the unit cube and $\sigma$ acts by permuting its coordinates, then the result is obtained by restricting \cite[Proposition~9.7]{Sta11} from the full symmetric group,
    \item if $\sigma$ has a unique fixed point then the coefficients are permutation representations \cite[Example~2.16]{Sta23}, this result generalises the case in \cite[Corollary~11.1]{Sta11} where the polytope is centrally symmetric about a lattice point,
    \item if $P$ has $n+2$ vertices that affinely generate the affine lattice $M$ \cite[Proposition~4.34]{Sta23},
    \item if $P^\sigma$ is a lattice polytope \cite[Example~3.20]{Sta23},
    \item if $P$ has a $\sigma$-invariant lattice triangulation \cite[Theorem~1.4]{Sta23}, which holds, for instance, for centrally symmetric polytopes \cite[Example~3.21]{Sta23},
    \item if $P$ is the symmetric edge polytope of a cycle \cite{CHK},
    \item if $P$ is a graphic zonotope of an odd path \cite{EKS}.
\end{itemize}

We also note two other cases. D'Al\`i and Delucchi consider polytopes with a unimodular triangulation that is invariant under a translative group action. In this case, the $H^*$-series is effective \cite[Theorem~5.2]{DD}. And D'Al\`i and Higashitani show that the $H^*$-series of order polytopes of graded posets satisfy the stronger condition of $\gamma$-effectiveness \cite[Theorem~4.5]{DH}.

As an aside, if we drop the condition that the vertices are lattice points and instead take them to be rational points, then the conjecture is false. In \cite{CHK}, the authors give a reflectionally symmetric rational polytope whose equivariant $H^*$-series is polynomial but not effective. Example~\ref{ex:counterexample} shows that it is also false for lattice polytopes. 

Note that our counterexample shows, for the case above where $P$ has $n+2$ vertices, that the hypothesis of affine generation cannot be dropped. The fixed polytope of that counterexample is not a lattice polytope, so the hypothesis of \cite[Example~3.20]{Sta23} cannot be dropped either.

\section{Criterion for a linear system of parameters}\label{sec:main}

The main result of this section is Theorem~\ref{thm:lsop}, which determines when the Ehrhart ring $S_P$ has a $\sigma$-stable linear system of parameters of the correct representation type, i.e., it is equal to $[\rho \otimes \CC] \in \MR(G)$ where $\rho : G \rightarrow \GL_{n+1}(\ZZ)$ is the natural inclusion. The condition is on the faces of $P$, and is given in terms of the following notion.

\begin{definition}[$\sigma$-odd and obstructing faces]\label{def:odd}
Let $F$ be a face of $P$ and $M_F \subseteq \ZZ^{n+1}$ the subgroup generated by the lattice points of $F$. We say that $F$ is \emph{$\sigma$-odd} if $\sigma(F) = F$ and every $\sigma$-fixed element of the group $M_F$ has even height, i.e., $\hgt(M_F \cap \Fix(\sigma)) \subseteq 2\ZZ$. Note that a $\sigma$-odd face contains no $\sigma$-fixed lattice point. We call a $\sigma$-odd face $F$ \emph{obstructing} if $\dim F^\sigma \ge q$, where $q$ is the dimension of the $(-1)$-eigenspace of $\sigma$ as in Section~\ref{sec:order-two}. Finally, we say that $P$ satisfies \emph{(Gen)} if its lattice points affinely generate the affine lattice $M$, equivalently if $M_P$ is all of $\ZZ^{n+1}$.
\end{definition}

The simplest kind of $\sigma$-odd face is an edge $F$ whose vertices $u$ and $v$ are swapped by $\sigma$ and contains no lattice point in its relative interior. In that case $M_F$ is freely generated by $u$ and $v$, the $\sigma$-fixed elements of $M_F$ are the multiples of $u+v$, and each such multiple has even height. On the other hand, a face fixed pointwise by $\sigma$ is never $\sigma$-odd as its vertices are $\sigma$-fixed height one lattice points.

For obstructing faces, let us compare Examples~\ref{ex:bipyramid} and \ref{ex:pyramid}, which both have $q = 1$. In Example~\ref{ex:bipyramid}, the two $\sigma$-odd edges $F$ each have $\dim F^\sigma = 0 < q$, so neither is obstructing and so $P$ has no obstructing face. In Example~\ref{ex:pyramid}, the $\sigma$-odd facet $F$ has $\dim F^\sigma = 1 \ge q$ and is therefore obstructing. Note that in the definition of obstructing, we may take $F = P$. For instance, in Example~\ref{ex:square}, the polytope is itself $\sigma$-odd, with $\dim P^\sigma = 1 \ge q$.

Every lattice polygon satisfies (Gen), because every lattice polygon has a unimodular triangulation and so its lattice points affinely generate the affine lattice $M$. The polytope of Example~\ref{ex:circuit} satisfies (Gen) as well. The simplest example where (Gen) fails is the Reeve simplex $\conv\{(0,0,0),(1,0,0),(0,1,0),(1,1,2)\}$, which is a lattice simplex with no non-vertex lattice points, but whose vertices generate a sublattice of index two.

We are now ready to state our main result, which we will prove in Section~\ref{sec:proofs}.

\begin{theorem}[Criterion for an invariant l.s.o.p.]\label{thm:lsop}
Let $G = \langle\sigma\rangle \subseteq \GL_{n+1}(\ZZ)$ be a group of order two acting on $P$ as in Section~\ref{sec:complete_setup}. Recall that $S_P$ is the Ehrhart ring of $P$ and $p$ and $q$ are the dimensions of the $(\pm1)$-eigenspaces of $\sigma$. Then $S_P$ has a $\sigma$-stable l.s.o.p.\ $V \subseteq (S_P)_1$ of class $[V] = [\rho\otimes\CC] = p\chi_0 + q\chi_1$ if and only if $P$ has no obstructing face. In that case $\varphi[t]$ is the graded character of $S_P/(V)$, and in particular it is effective.
\end{theorem}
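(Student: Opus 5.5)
The plan is to translate the l.s.o.p.\ condition into a torus-orbit (nondegeneracy-type) criterion, and then study how a $\sigma$-stable $V$ splits into its $\pm1$-eigenspaces.

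\emph{Step 1: reduce to faces.} First I would invoke the standard criterion for affine semigroup rings. Let $\theta_1,\dots,\theta_{n+1} \in (S_P)_1$. For a face $F$, write $\theta|_F = \sum_{u \in F \cap M} c_u x^u$ for the truncation of $\theta$ to $F$, viewed as a function on the torus $T_F = \Hom(M_F,\CC^*)$. Then the $\theta_i$ form an l.s.o.p.\ if and only if, for every face $F$, the truncations $\theta_i|_F$ have no common zero on $T_F$. Since $V$ is spanned by forms homogeneous of degree one, this is really a condition on the projectivised torus $T_F/\CC^*$, which has dimension $\dim F$. I would cite this from \cite{Sta23} or the toric face ring literature \cite{IR07}, or prove it from the face filtration of $S_P$.

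Now write $V = V_+ \oplus V_-$ with $\dim V_+ = p$ and $\dim V_- = q$. Every element of $V_+$ is a combination of the forms $x^u + x^{\sigma u}$, and every element of $V_-$ is a combination of the forms $x^u - x^{\sigma u}$.

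\emph{Step 2: necessity.} Let $F$ be a $\sigma$-invariant face. Consider the locus $Z_F \subseteq T_F$ of characters $t$ satisfying
\[
t(\sigma u) = (-1)^{\hgt(u)}\, t(u) \qquad \text{for all } u \in M_F .
\]
Such a $t$ is well defined on $M_F$ only if the sign is trivial on fixed elements, since a fixed $u$ forces $t(u) = (-1)^{\hgt u} t(u)$. So $Z_F \neq \emptyset$ exactly when $F$ is $\sigma$-odd. In that case $Z_F$ is a coset of the subtorus $\{t : t \circ \sigma = t\}$, whose dimension is $\operatorname{rank}(M_F)^\sigma = \dim F^\sigma + 1$ by Lemma~\ref{lem:fixed}(b). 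Hence $Z_F$ has projective dimension $\dim F^\sigma$.

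On $Z_F$, for a height-one point $u$ we have $t(u) + t(\sigma u) = 0$, so every element of $V_+$ vanishes identically there. Thus $V$ is an l.s.o.p.\ only if the $q$ forms of $V_-$ have no common zero on a projective variety of dimension $\dim F^\sigma$. This forces $q > \dim F^\sigma$. Therefore an obstructing face prevents such a $V$.

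\emph{Step 3: sufficiency.} Suppose $P$ has no obstructing face. I would choose $V_+$ and $V_-$ generically inside the spaces spanned by the $\sigma$-symmetrised and $\sigma$-antisymmetrised monomials. Then I would verify, face by face, that the common zero locus is empty by a dimension count.

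For a face $F$ that is not $\sigma$-invariant, the restrictions of $V_\pm$ to $F$ are generic enough; this is the non-equivariant argument. For $\sigma$-invariant $F$, I would stratify $T_F$ according to the behaviour under $\sigma$:
\begin{itemize}
\item Points where the symmetrised monomials restrict to a base-point-free linear system. Here a generic choice cuts dimension by one per form, so the total count $p + q = n+1 > \dim F$ suffices.
\item The projectively $\sigma$-fixed loci. These are the untwisted fixed torus, of projective dimension $\dim F^\sigma \le p-1$, where $V_+$ restricts generically and $p$ forms suffice; and the twisted locus $Z_F$, where $V_-$ restricts generically and $q > \dim F^\sigma$ forms suffice.
\end{itemize}

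The main obstacle is making the genericity and base-point-freeness rigorous on each stratum. By the Warning in Section~\ref{sec:known}, naive base-point-freeness arguments can fail, so I would argue directly with the explicit restricted linear systems on each stratum. In particular I would use that on the untwisted fixed locus, the lattice points of $F$ map to lattice points of $M_F^\sigma$ spanning it up to finite index.

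\emph{Step 4: the graded character.} Once $V$ exists, $S_P$ is Cohen--Macaulay, so a basis of $V$ is a regular sequence. The equivariant Koszul complex then gives
\[
\EES^G_P(t)\,\det(I - t\,[V]) = \text{graded character of } S_P/(V).
\]
Since $[V] = [\rho\otimes\CC]$, the left-hand side equals $\varphi[t]$; this is Proposition~\ref{prop:lsop-eff}. Effectiveness follows because the graded pieces of $S_P/(V)$ are genuine representations.
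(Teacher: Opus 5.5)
Your proposal is correct. The necessity half is the paper's argument. Your twisted locus $Z_F$ corresponds to $Z_-\cap\orb{F}$ from Lemma~\ref{lem:Z}(c), and your untwisted locus corresponds to $Z_+\cap\orb{F}$ from Lemma~\ref{lem:Z}(b). Cutting the twisted locus with a basis of $V_-$, while $V_+$ vanishes on it, is exactly the $(\Leftarrow)$ step.

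For sufficiency your route is organised differently from the paper's.

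\emph{The paper's route.} It cuts all of $X$ in two phases.
\begin{enumerate}
\item First it uses $q$ anti-invariant forms. Krull's height theorem shows every component of $Z^{(j-1)}$ has dimension $>p-1=\dim Z_+$, so $W_-$ does not vanish on it. The paper tracks $\dim(Z_-\cap Z^{(j)})$ separately, until the residual locus has dimension at most $p-1$ and misses $Z_-=V(W_+)$.
\item Then it uses $p$ invariant forms.
\end{enumerate}

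\emph{Your route.} In effect you split $X$ into three intrinsic strata and count on each separately:
\begin{itemize}
\item $Z_+$, where $V_-$ vanishes but $W_+$ has no base point, of dimension $p-1$;
\item $Z_-$, where $V_+$ vanishes but $W_-$ has no base point, of dimension $\le q-1$;
\item the complement, where both systems are base-point-free, of dimension $\le n$.
\end{itemize}
Non-invariant faces simply lie in the third stratum, so the face-by-face bookkeeping is not really needed.

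\emph{Comparison.} Your route removes the two-condition induction and the Krull step, and makes the roles of $p$, $q$ and $n+1$ transparent. In exchange it needs two extra things. First, a single choice of $V_\pm$ must be generic for all strata at once; this needs an incidence-variety dimension count, or the paper's device of choosing forms one at a time outside finitely many proper subspaces. Second, you need $\dim W_-\ge q$ and $\dim W_+\ge p$. This holds because the vectors $u\mp\sigma u$, over lattice points $u$ of $P$, span the $(\mp1)$-eigenspaces. Alternatively, choose tuples and get linear independence from Krull's height theorem, as the paper does.

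Three points need tightening.

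\emph{Existence of a twisted character.} In Step~2, existence when $F$ is $\sigma$-odd is not a well-definedness statement. The character $\varepsilon(m)=(-1)^{\hgt(m)}$ kills $M_F^\sigma=\ker(\sigma-1)|_{M_F}$, so it factors through $(\sigma-1)M_F$. The resulting character extends to $M_F$ because $\CC^*$ is divisible.

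\emph{Closure of $Z_F$.} $Z_F$ is not projective, and the $q$ forms of $V_-$ need not have a common zero on it. For the pyramid in Example~\ref{ex:lsop-in-action}, $\theta=x^A-x^B$ vanishes on the distinguished curve only at $[0:0:0:1:-1]$, which lies in the orbit of the edge $\conv\{C,D\}$. So you must pass to the closure of $Z_F$ in $X$, on which $V_+$ still vanishes. Then apply your Step~1 criterion at whichever face the common zero lands on. This is why the paper works with the closed set $Z_-$.

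\emph{The first stratum.} In Step~3 it should be the complement of both fixed loci. What you need there is that $W_+$ and $W_-$ are \emph{both} base-point-free. This is immediate, because their base loci in $\orb{F}$ are exactly the twisted and untwisted loci. No finite-index statement about $M_F^\sigma$ is needed on the untwisted locus: for any vertex $v$ of $F$, the form $x^v+x^{\sigma v}$ takes the value $2t(v)\neq0$ there.
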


\begin{remark}\label{rem:cor710}
Theorem~\ref{thm:lsop} proves \cite[Corollary~7.10]{Sta11} for the case of a group of order two, whose published proof is incomplete. See the warning in Section~\ref{sec:known}. Assume that every face of $P$ of dimension greater than one contains a lattice point fixed by its stabiliser. We check that no face of $P$ is obstructing, taking cases on the dimension of the face. A face that is not $\sigma$-invariant is not $\sigma$-odd, by Definition~\ref{def:odd}, so we may restrict to the $\sigma$-invariant faces, whose stabiliser is all of $G$. A $\sigma$-invariant face of dimension greater than one contains a $\sigma$-fixed lattice point by assumption, so such a face is not $\sigma$-odd. A $\sigma$-invariant vertex is itself a $\sigma$-fixed lattice point of height one, so a vertex is not $\sigma$-odd either. Finally, a $\sigma$-invariant edge $F$ is either fixed pointwise, in which case the endpoints of $F$ are $\sigma$-fixed lattice points of height one and $F$ is not $\sigma$-odd, or has its endpoints swapped, in which case $\dim F^\sigma = 0 < q$ and the edge $F$ is not obstructing. Hence $P$ has no obstructing face, and $\varphi[t]$ is effective by Theorem~\ref{thm:lsop}.
\end{remark}

By Lemma~\ref{lem:fixed}(c) a proper face satisfies $\dim F^\sigma \le p-2$, so a proper face can be obstructing only when $p \ge q+2$. When $p = q+1$ the only face that can be obstructing is $P$ itself, which has $\dim P^\sigma = p-1 = q$. So we obtain the following three corollaries.

\begin{corollary}\label{cor:p-le-q}
If $p \le q$, then $P$ has no obstructing face, and $\varphi[t]$ is effective and polynomial.
\end{corollary}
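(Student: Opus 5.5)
The corollary follows by combining the dimension bound of Lemma~\ref{lem:fixed} with Theorem~\ref{thm:lsop} and Theorem~\ref{thm:stapledon}. The plan is first to show that no face of $P$ can be obstructing when $p \le q$, and then to invoke Theorem~\ref{thm:lsop} for effectiveness. Polynomiality then comes either from Corollary~\ref{cor:poly}(i) directly, or from the implication $(4)\Rightarrow(5)$ of Theorem~\ref{thm:stapledon}, i.e.\ the observation in Section~\ref{setup} that an effective series is a polynomial.

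For the first step, let $F$ be any $\sigma$-odd face of $P$. By Definition~\ref{def:odd}, $F$ is $\sigma$-invariant, so Lemma~\ref{lem:fixed}(b) applies, and $F^\sigma$ is a nonempty face of $P^\sigma$. We then split into two cases.
\begin{itemize}
\item If $F = P$, Lemma~\ref{lem:fixed}(c) gives $\dim F^\sigma = p-1$.
\item If $F$ is a proper face, Lemma~\ref{lem:fixed}(c) gives $\dim F^\sigma \le p-2$.
\end{itemize}
In either case $\dim F^\sigma \le p-1 \le q-1 < q$. So no $\sigma$-odd face meets the condition $\dim F^\sigma \ge q$, and $P$ has no obstructing face.

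Theorem~\ref{thm:lsop} then provides a $\sigma$-stable l.s.o.p.\ $V$ of class $p\chi_0 + q\chi_1$, and $\varphi[t]$ is the graded character of $S_P/(V)$. In particular $\varphi[t]$ is effective. Each coefficient of $\varphi[t]$ is effective with dimension $(h^*_P)_i$, which vanishes for $i>n$, so $\varphi[t]$ is a polynomial. This is consistent with Corollary~\ref{cor:poly}(i).

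The only real content is already contained in Theorem~\ref{thm:lsop}, whose proof is deferred. Given that theorem, no step here is an obstacle; the one point to check is that the case $F = P$ is allowed in the definition of obstructing, and it is excluded because $p - 1 < q$.
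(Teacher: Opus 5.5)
Your proposal is correct and matches the paper's proof: Lemma~\ref{lem:fixed}(c) gives $\dim F^\sigma \le p-1 \le q-1$ for every invariant face, so no face is obstructing, and Theorem~\ref{thm:lsop} then gives effectiveness. The paper gets polynomiality from Corollary~\ref{cor:poly}(i) rather than deducing it from effectiveness, but either route is valid.
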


\begin{proof}
Suppose that $p \le q$. By Lemma~\ref{lem:fixed}(c), every invariant face $F$ satisfies $\dim F^\sigma \le p-1 \le q-1$, so no face of $P$ is obstructing. Since $p \le q$, Corollary~\ref{cor:poly}(i) gives that $\varphi[t]$ is polynomial, and Theorem~\ref{thm:lsop} gives that $\varphi[t]$ is effective.
\end{proof}

In the case where $p = 1$, the above gives the effectiveness part of \cite[Example~2.16]{Sta23} for a group of order two. Moreover, in that case, it is possible to show that the coefficients are permutation representations.

\begin{corollary}[Lattice fixed polytope]\label{cor:lattice-fixed}
If the fixed polytope $P^\sigma$ is a lattice polytope, for instance if the matrix $U$ does not appear in the normal form of $\sigma$, as in Section~\ref{sec:order-two}, then $P$ has no $\sigma$-odd faces and $\varphi[t]$ is effective. In that case, effectiveness of $\varphi[t]$ is equivalent to the coefficient-wise inequality
\[
\left[h^*_P(t)\right]_i \ \ge\ \left[(1+t)^q\,h^*_{P^\sigma}(t)\right]_i \qquad \text{for each } i,
\]
where $[f(t)]_i$ denotes the coefficient of $t^i$ in the polynomial $f$.
\end{corollary}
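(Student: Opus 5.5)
The plan has three steps: rule out $\sigma$-odd faces, invoke Theorem~\ref{thm:lsop}, and then translate effectiveness into the displayed inequality using Proposition~\ref{prop:eff} together with Corollary~\ref{cor:poly}(ii).

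\emph{No $\sigma$-odd faces.} First I treat the parenthetical. If $c=0$, then Lemma~\ref{lem:fixed}(a) already says $P^\sigma$ is a lattice polytope, so it suffices to assume $P^\sigma$ is a lattice polytope. Let $F$ be a $\sigma$-invariant face of $P$. By Lemma~\ref{lem:fixed}(b), $F^\sigma=\pi(F)$ is a nonempty face of $P^\sigma$. A nonempty face of a lattice polytope has a vertex, and that vertex is a lattice point of $P^\sigma$. So $F$ contains a $\sigma$-fixed lattice point $u$ of height one, which lies in $F^\sigma\subseteq F$. Then $u\in M_F\cap\Fix(\sigma)$ has odd height, so $F$ is not $\sigma$-odd. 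Faces that are not $\sigma$-invariant are excluded by Definition~\ref{def:odd} directly. Hence $P$ has no $\sigma$-odd face, and a fortiori no obstructing face.

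\emph{Effectiveness.} Theorem~\ref{thm:lsop} now gives a $\sigma$-stable l.s.o.p.\ of class $[\rho\otimes\CC]$, and hence $\varphi[t]$ is effective. The conditional phrasing ``effectiveness is equivalent to the inequality'' therefore really says that the inequality always holds under the hypothesis. I will prove it as a genuine equivalence and then conclude from it.

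\emph{The coefficient-wise inequality.} By Corollary~\ref{cor:poly}(ii), $\varphi[t](\sigma)=(1+t)^q h^*_{P^\sigma}(t)$, and by \eqref{eq:evals}, $\varphi[t](e)=h^*_P(t)$. Proposition~\ref{prop:eff} says $\varphi[t]$ is effective if and only if $|\varphi_i(\sigma)|\le (h^*_P)_i$ for every $i$. Here $h^*_{P^\sigma}$ is the $h^*$-polynomial of a lattice polytope, so it has nonnegative coefficients by Stanley's theorem. Therefore $(1+t)^q h^*_{P^\sigma}(t)$ has nonnegative coefficients, the absolute value can be dropped, and the condition becomes exactly the displayed inequality.

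The only step needing care is the first: making sure the fixed lattice point found in $F^\sigma$ actually lies in $F$, so that it belongs to $M_F$. This is guaranteed by $F^\sigma=\pi(F)=F\cap\Fix(\sigma)$.
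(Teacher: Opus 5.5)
Your proof is correct and follows the paper's argument essentially step for step. A vertex of the nonempty face $F^\sigma$ of the lattice polytope $P^\sigma$ gives a $\sigma$-fixed lattice point of $F$ at height one, so $P$ has no $\sigma$-odd (hence no obstructing) face and Theorem~\ref{thm:lsop} applies; then Corollary~\ref{cor:poly}(ii) and the nonnegativity of $h^*_{P^\sigma}$ let you drop the absolute values in Proposition~\ref{prop:eff}, and your extra remarks (non-invariant faces are excluded by definition, and the inequality therefore always holds under the hypothesis) are accurate clarifications.
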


\begin{proof}
If the matrix $U$ does not appear in the normal form of $\sigma$, then $c = 0$ and the fixed polytope $P^\sigma$ is a lattice polytope by Lemma~\ref{lem:fixed}(a). So suppose from now on that the fixed polytope $P^\sigma$ is a lattice polytope.

Let $F$ be an invariant face of $P$. By Lemma~\ref{lem:fixed}(b), the fixed part $F^\sigma$ is a nonempty face of the lattice polytope $P^\sigma$, so $F^\sigma$ has a vertex $u$ in $\ZZ^{n+1}$. That vertex is a $\sigma$-fixed lattice point of $F$ at height one, so the face $F$ is not $\sigma$-odd. Hence $P$ has no $\sigma$-odd face. In particular no face of $P$ is obstructing, so $\varphi[t]$ is effective by Theorem~\ref{thm:lsop}.

For the inequality, Corollary~\ref{cor:poly}(ii) gives $\varphi[t](\sigma) = (1+t)^qh^*_{P^\sigma}(t)$, whose coefficients are nonnegative. So the absolute values in Proposition~\ref{prop:eff} may be dropped, and effectiveness of $\varphi[t]$ is equivalent to the coefficient-wise inequality $\left[h^*_P(t)\right]_i \ge \left[(1+t)^qh^*_{P^\sigma}(t)\right]_i$ for each $i$.
\end{proof}

For a centrally symmetric polytope with central lattice point and $\sigma$ having $p = 1$ and $q = n$, the inequality becomes $h^*_i \ge \binom ni$, which coincides with the bound of Bey, Henk and Wills \cite[Remark~1.6]{BHW}, which also follows from \cite{Sta87} together with \cite{BM85}. The equivalence between the bound and effectiveness in this case is \cite[Section~11]{Sta11}.

We note that the effectiveness half of Corollary~\ref{cor:lattice-fixed} is not new. It appears in \cite[Example~3.20]{Sta23} and is proved using triangulations. Taking $P^\sigma$ as the subpolytope $Q$ in \cite[Corollary~3.18]{Sta23} and $K$ the subcomplex of faces of $P$ disjoint from $Q$, the action on $K$ is translative by \cite[Example~3.15]{Sta23}. So, by \cite[Corollary~3.17]{Sta23}, we obtain an invariant triangulation of $K$. Note, any lattice triangulation of $Q$ is invariant because the involution fixes it pointwise. So the gluing of these two triangulations gives a $\sigma$-invariant regular lattice triangulation of $P$, and effectiveness follows from Theorem~\ref{thm:stapledon}. For a centrally symmetric polytope, the same construction is carried out in \cite[Example~3.21]{Sta23}. Our proof uses the absence of obstructing faces and avoids triangulations. In addition, we obtain the new coefficient-wise inequality above.

\begin{corollary}\label{cor:p-eq-q-plus-1}
Suppose $p = q+1$. Then $P$ has no obstructing face if and only if $P$ is not
$\sigma$-odd, and $\varphi[t]$ is polynomial if and only if the height group
$\hgt(\Lambda)$ is all of $\ZZ$. Suppose moreover that $P$ satisfies (Gen).
Then $\varphi[t]$ is polynomial if and only if it is effective. Hence
$\varphi[t]$ is polynomial and effective if and only if some $\sigma$-fixed
vector of $\ZZ^{n+1}$ has odd height.
\end{corollary}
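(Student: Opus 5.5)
The plan is to prove the four assertions in order, each reducing to earlier results.

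\textbf{First claim.} With $p = q+1$, Lemma~\ref{lem:fixed}(c) gives $\dim F^\sigma \le p-2 = q-1 < q$ for every proper $\sigma$-invariant face $F$. So no proper face is obstructing. For $F = P$ we have $\dim P^\sigma = p-1 = q$, so $P$ is obstructing exactly when it is $\sigma$-odd. This gives the first equivalence.

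\textbf{Second claim.} The polynomiality criterion is Corollary~\ref{cor:poly}(iii) verbatim.

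\textbf{Third claim.} Now assume (Gen), so $M_P = \ZZ^{n+1}$. Then $M_P \cap \Fix(\sigma) = \Lambda$, and $P$ is $\sigma$-odd if and only if $\hgt(\Lambda) \subseteq 2\ZZ$. By Lemma~\ref{lem:fixed}(d) this is equivalent to $\hgt(\Lambda) = 2\ZZ$. (Note that $P$ is automatically $\sigma$-invariant.) Combining with the first two claims, $\varphi[t]$ is polynomial if and only if $\hgt(\Lambda) = \ZZ$, if and only if $P$ is not $\sigma$-odd, if and only if $P$ has no obstructing face. In that case Theorem~\ref{thm:lsop} gives effectiveness. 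Conversely, effectiveness always implies polynomiality, by Theorem~\ref{thm:stapledon} or the remark before Conjecture~\ref{conj:eff}. Hence polynomial and effective are equivalent.

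\textbf{Final claim.} Under (Gen), polynomial-and-effective is equivalent to $\hgt(\Lambda) = \ZZ$. Since $\hgt(\Lambda)$ is a subgroup of $\ZZ$ containing $2\ZZ$, this holds exactly when some $\sigma$-fixed lattice vector has odd height.

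The only step needing care is the identification $M_P \cap \Fix(\sigma) = \Lambda$ under (Gen), which is immediate from the definitions. No real obstacle is expected.
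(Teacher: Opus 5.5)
Your proposal is correct and follows the paper's proof essentially step for step. It uses Lemma~\ref{lem:fixed}(c) to rule out proper obstructing faces, Corollary~\ref{cor:poly}(iii) for polynomiality, and the identification $M_P\cap\Fix(\sigma)=\Lambda$ under (Gen) together with Lemma~\ref{lem:fixed}(d) to tie $\sigma$-oddness of $P$ to $\hgt(\Lambda)=2\ZZ$; it then closes with Theorem~\ref{thm:lsop}, the fact that effectiveness forces polynomiality, and $2\ZZ\subseteq\hgt(\Lambda)$ for the final claim.
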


\begin{proof}
Suppose that $p = q+1$, and let $F$ be a proper face of $P$. By Lemma~\ref{lem:fixed}(c), we have $\dim F^\sigma\le p-2 = q-1$ and $\dim P^\sigma = p-1 = q$. So $P$ has no proper obstructing face, and so $P$ has an obstructing face if and only if $P$ is $\sigma$-odd.
By Corollary~\ref{cor:poly}(iii), we have $\varphi[t]$ is polynomial if and only if $\hgt(\Lambda) = \ZZ$.

Suppose that $P$ satisfies (Gen). Then $M_P$ is all of $\ZZ^{n+1}$ and
$M_P\cap\Fix(\sigma) = \Lambda$. Then $P$ is $\sigma$-odd if and only if every
$\sigma$-fixed vector of $\ZZ^{n+1}$ has even height, that is, if and only if the height group
is $\hgt(\Lambda) = 2\ZZ$. By combining the equivalence just established with the first two claims of the corollary, we deduce that the following are equivalent: the polytope has no obstructing face; $P$ is not $\sigma$-odd; the height group is
$\ZZ$; and $\varphi[t]$ is a polynomial. If $\varphi[t]$ is polynomial, then $P$ has
no obstructing face and so $\varphi[t]$ is effective by Theorem~\ref{thm:lsop}.
Conversely, if $\varphi[t]$ is effective, then $\varphi[t]$ is a polynomial \cite[Remark~2.13]{Sta23}. Hence, under (Gen), the series $\varphi[t]$ is polynomial if and only if $\varphi[t]$ is effective.

For the last statement, there exists a $\sigma$-fixed vector in $\ZZ^{n+1}$ with odd height if and only if $\hgt(\Lambda) = \ZZ$, which by the above happens if and only if $\varphi[t]$ is polynomial and effective.
\end{proof}

This corollary is, in some sense, sharp. Call the equivalence between polynomiality and effectiveness the \emph{first} claim of the corollary, and the criterion in terms of a $\sigma$-fixed vector of odd height the \emph{second} claim. Neither hypothesis ($p = q+1$ or (Gen)) can be dropped from the second claim.
For $p \ge q+2$ the second claim fails. 
The polytope in Example~\ref{ex:pyramid} satisfies (Gen) and has a $\sigma$-fixed vector of odd
height in $\ZZ^4$, but $\varphi[t]$ is not a polynomial. 
For $p \le q$, the line segment $P = [0,1]\times\{1\}$ with $\sigma(x,h) = (h-x,h)$
satisfies (Gen), and $\varphi[t] = \chi_0$ is polynomial and effective, but
every $\sigma$-fixed vector of $\ZZ^2$ has even height. On the other hand, Example~\ref{ex:counterexample} has $p = q+1$ and fails (Gen), and $\varphi[t]$ is polynomial but not effective. So (Gen) cannot be dropped from the first claim either. Whether the hypothesis $p = q+1$ can be dropped from the first claim is open, because neither counterexample of this paper satisfies (Gen); see Remark~\ref{rem:hyp}. In Example~\ref{ex:counterexample}, the polytope has $n+2$ vertices, so it also gives sharpness of the following criterion of Stapledon.

\begin{corollary}[Sharpness of {\cite[Proposition~4.34]{Sta23}}]\label{cor:434-sharp}
The hypothesis of affine generation cannot be dropped from \cite[Proposition~4.34]{Sta23}: there is an $n$-dimensional $\sigma$-invariant lattice polytope with $n+2$ vertices whose equivariant $H^*$-series is polynomial but not effective.
\end{corollary}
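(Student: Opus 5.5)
The statement is existential, so the plan is to exhibit the polytope $P$ of Example~\ref{ex:counterexample}, given in the introduction, and check its properties directly. The polytope has the eight listed vertices in $\RR^7$ at height one, with $\sigma(u,v,h) = (v,u,h)$. So $\sigma$ is of type $(1,0,3)$, with $p=4$ and $q=3$.

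The first step is to verify that $P$ is $6$-dimensional, so $n=6$ and $n+2=8$ vertices. This means checking two things. The $8\times 7$ matrix of vertices must have rank $7$, which is a determinant computation. Each listed point must also be a genuine vertex, i.e.\ not a convex combination of the others. Since $8=n+2$, the vertices form a circuit, and we find the unique affine dependence among them. A point fails to be a vertex only if it is the sole point with a coefficient of one sign in that dependence, so it suffices to check that at least two coefficients of each sign occur. We also check that $\sigma$ permutes the vertex set, so that $P$ is $\sigma$-invariant.

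Next we compute $\varphi[t]$ via the two evaluations in Equation~\eqref{eq:evals}. For $h^*_P$, we use that $P$ is a circuit polytope. It has exactly two triangulations, determined by the signs of the affine dependence, and each consists of simplices obtained by deleting one vertex. Summing the box polynomials (the $h^*$ of the half-open simplices) over one of these triangulations gives $h^*_P$. We expect
\[
h^*_P(t) = 1+t+t^3+t^4 + 0\cdot t^2 \quad\text{plus correction,}
\]
and concretely we aim for $h^*_P(t) = 1+t+0t^2+t^3+t^4$, since the claimed $\varphi[t](e) = 1+t+0\cdot t^2+t^3+t^4$. The Ehrhart polynomial can be cross-checked by counting lattice points of $kP$ for $k\le 4$. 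For the $\sigma$-part, we compute $P^\sigma = \pi(P)$, a $3$-dimensional half-integral polytope, and its Ehrhart quasi-polynomial $N(k)$, again by counting for small $k$. We expect $\hgt(\Lambda)=\ZZ$, so that by Corollary~\ref{cor:poly}(iii) $\varphi[t]$ is polynomial. We then verify
\[
(1-t)^4(1+t)^3\Ehr_{P^\sigma}(t) = 1-t+2t^2-t^3+t^4.
\]
Combining this with $h^*_P$ via $m_i=\frac12(c(e)+c(\sigma))$ and $m'_i=\frac12(c(e)-c(\sigma))$ yields the stated $\varphi[t]$. Its $t^2$ coefficient has $c(e)=0$ and $c(\sigma)=2$, which violates Proposition~\ref{prop:eff}.

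The main obstacle is the exact computation of $h^*_P$ and $N(k)$. These are finite but error-prone lattice-point counts. The cleanest route is the circuit triangulation together with the determinants of the simplices, and in practice one would certify the counts by computer enumeration for $k\le 4$. This suffices because $h^*_P$ has degree at most $6$, and the Ehrhart values up to $k=6$ determine it completely.
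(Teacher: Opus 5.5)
Your route is the paper's. Its proof points to Example~\ref{ex:counterexample} and reads off that $n=6$, that $P$ has eight vertices which are its only lattice points, and that $\varphi[t]$ is polynomial but not effective. The paper also records that these eight points generate an index-two subgroup of $\ZZ^7$, so affine generation is genuinely the hypothesis that fails. You omit this. It is not logically needed, since it is forced by Stapledon's proposition once the other properties hold, but it is the natural thing to state for a sharpness claim.

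There are two slips in your verification plan.

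First, the eight points do not form a circuit. Solving for the affine dependence gives the unique relation $w_1+w_3=\sigma(w_1)+\sigma(w_3)$. So $w_2,\sigma(w_2),w_4,\sigma(w_4)$ have coefficient zero, and $P$ is an iterated pyramid over the square $\conv\{w_1,\sigma(w_1),w_3,\sigma(w_3)\}$. That square is a copy of the orthogonally reflected square of Example~\ref{ex:square}.
\begin{itemize}
\item Your vertex test still passes, because points with zero coefficient are apexes and hence vertices.
\item Each of the two triangulations has only two simplices, obtained by deleting $w_1$ or $w_3$ (respectively $\sigma(w_1)$ or $\sigma(w_3)$), not by deleting an arbitrary vertex.
\item Each simplex has normalised volume $2$, and both have the same nonzero box point $(1,1,1,1,1,1,3)\in\ZZ^7$.
\item The half-open decomposition then gives $h^*_P=(1+t^3)+(t+t^4)$, as claimed.
\end{itemize}

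Second, your enumeration range is inconsistent. Ehrhart values for $k\le 4$ do not determine a degree-$6$ Ehrhart polynomial; you need $k\le 6$, or reciprocity. For the period-two quasi-polynomial $N$ of degree $3$ you need $k\le 7$.

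In fact $N$ is easy by hand. In the coordinates $s_j=\frac12(u_j+v_j)$, one finds $N(2m)=\sum_{i=1}^{m+1}i^2$ and $N(2m+1)=\sum_{i=1}^{m}i^2$. Hence $\Ehr_{P^\sigma}(t)=(1+t^2)(1+t^3)/(1-t^2)^4$, and $\varphi[t](\sigma)=(1+t^2)(1-t+t^2)=1-t+2t^2-t^3+t^4$, which matches the target values.
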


\begin{proof}
Example~\ref{ex:counterexample} has $n = 6$ and eight vertices, which are its only lattice points, so the polytope of that example has $n+2$ vertices. Those eight lattice points generate a subgroup of index two, so the polytope does not satisfy (Gen) and the hypothesis of affine generation fails. The equivariant $H^*$-series of that polytope is polynomial but not effective, which gives the required example.
\end{proof}

\begin{table}[t]
\centering
\renewcommand{\arraystretch}{1.15}
\begin{tabular}{>{\raggedright\arraybackslash}p{8.0cm}>{\raggedright\arraybackslash}p{5.2cm}>{\raggedright\arraybackslash}p{3.0cm}}
\toprule
Case & Status & Reference \\
\addlinespace
\midrule
dimension $n \le 2$ & holds & \cite{Sta11,Sta23} \\
\addlinespace
$P$ is a simplex & holds & \cite{Sta11} \\
\addlinespace
$P$ has $n+2$ vertices and (Gen) & holds & \cite{Sta23} \\
\addlinespace
$P$ has invariant lattice triangulation & holds & \cite{Sta23} \\
\addlinespace
eigenspaces with $p \le q$ & holds (always polynomial) & Corollary~\ref{cor:p-le-q} \\
\addlinespace
fixed polytope $P^\sigma$ a lattice polytope & holds & \cite{Sta23}, Corollary~\ref{cor:lattice-fixed} \\
\addlinespace
no obstructing face & holds & Theorem~\ref{thm:lsop} \\
\addlinespace
eigenspaces with $p = q+1$ and (Gen) & holds & Corollary~\ref{cor:p-eq-q-plus-1} \\
\addlinespace
\midrule
eigenspaces with $p = q+1$, polytope $\sigma$-odd, (Gen) does not hold & fails for $n = 6$ & Example~\ref{ex:counterexample} \\
\addlinespace
eigenspaces with $p \ge q+2$, some obstructing face, series polynomial & fails for $n = 6$ & Example~\ref{ex:counterexample-c2} \\
\addlinespace
\bottomrule
\end{tabular}
\caption{Status of Conjecture~\ref{conj:eff} for a group of order two.}
\label{tab:status}
\end{table}

\begin{remark}[Polytopes at other heights]\label{rem:rescale}
Suppose $P$ lies in a hyperplane $\{\ell = k\}$ for a $G$-invariant functional $\ell$ on $\ZZ^{n+1}$ and integer $k > 1$. For instance, the hypersimplex $\Delta(k,n+1) \subseteq \{\sum_i x_i = k\}$ lies at height $k$. If we replace $\ZZ^{n+1}$ by the $G$-invariant sublattice $\{v \in \ZZ^{n+1} : k \mid \ell(v)\}$ and choose a basis, then there is no change to any lattice-point count. To see this, note that the functional $\ell$ takes the value $mk$ on the dilate $mP$, and the polytope now lies at height one. The values $p$ and $q$ are unaffected by the change despite the possible change in the type $(a,b,c)$.
\end{remark}

\begin{remark}[Hypersimplices and permutahedra]\label{rem:hyp}
Let the permutation $\sigma \in S_{n+1}$ be a product of $c'$ disjoint transpositions with $f$ fixed points, acting by permuting coordinates, so that $p = f + c'$ and $q = c'$. Consider the hypersimplex $\Delta(k,n+1)$ with $1 \le k \le n$, rescaled as in Remark~\ref{rem:rescale}. 
Note that the hypersimplex satisfies (Gen), as the differences of its vertices generate all vectors of the form $e_i - e_j$, and those vectors generate the sublattice $M_0 = \{v \in \ZZ^{n+1} \colon \sum_i v_i = 0\}$, the $A_n$ root lattice.
If $f=0$ then $\varphi[t]$ is effective by Corollary~\ref{cor:p-le-q}. For $f = 1$, there is a $\sigma$-fixed coordinate vector $k e_i$ of height one for some $i$, so $\varphi[t]$ is effective by Corollary~\ref{cor:p-eq-q-plus-1}.
If instead $f \ge 2$, then $p \ge q+2$, which lies in case (C2) of Proposition~\ref{prop:open}. We show in Example~\ref{ex:counterexample-c2} that the Effectiveness Conjecture does not hold in that case, so the case of
hypersimplices with $f \ge 2$ is not settled by the results above. However, by the above, hypersimplices
satisfy (Gen), which is a difference from our counterexample.

Note that the equivariant Ehrhart theory of $\Delta(k,n+1)$ under the full symmetric group is studied in \cite{Hyp}, where it is also conjectured that the standard lattice triangulation is invariant under a dihedral subgroup \cite[Conjecture~6.3]{Hyp}. For involutions of that dihedral group, the conjectured invariance would give effectiveness by Theorem~\ref{thm:stapledon}. The same arguments apply to the permutahedron, and by \cite[Lemma~5.2]{ASVM} the evaluation $\varphi[t](\sigma)$ is a polynomial exactly when the number of even parts of the cycle type of the permutation $\sigma$ is: zero; one less than the number of parts; or equal to the number of parts. For us, this means $f \le 1$. Hence Conjecture~\ref{conj:eff} holds for each order-two subgroup of the symmetric group $S_{n+1}$ acting on the permutahedron.
\end{remark}

\section{Examples and counterexamples}\label{sec:examples}

Throughout this section, we assume the setup in Section~\ref{sec:complete_setup}, including the involution $\sigma$, the values $p$ and $q$, the fixed lattice $\Lambda$ and the fixed polytope $P^\sigma$. We also recall the notion of $\sigma$-odd and obstructing faces as well as the condition (Gen) from Definition~\ref{def:odd}. 
The first four examples illustrate the definitions and the criteria, and the last two are counterexamples to Conjecture~\ref{conj:eff}.

\begin{example}[Pyramid]\label{ex:pyramid}
Let $n = 3$, write $(x,y,z,h)$ for the coordinates of $\ZZ^4$ with $\widetilde M_0 = \{h=0\}$ the orthogonal space to the fixed vector $\alpha = (0,0,0,1)$. Let $\sigma$ act by swapping the coordinates $x$ and $y$. Let
\[
P = \conv\{0,\, e_x,\, e_y,\, e_x+e_z,\, e_y+e_z\} \times \{1\}
\]
be a polytope with normalised volume $2$. Its $h^*$-polynomial is $h^*_P = 1+t$. In this case, we have $p = 3$, $q = 1$, and the type is $(2,0,1)$. The facet $F = \conv\{e_x,e_y,e_x+e_z,e_y+e_z\}$ is $\sigma$-odd with $\dim F^\sigma = 1 > q-1$, so $F$ is obstructing. The fixed-point count is $N(k) = (\lfloor k/2\rfloor + 1)^2$ and so
\[
\varphi[t](\sigma) = \frac{1+t^2}{1+t}
\]
is not polynomial so the Effectiveness Conjecture trivially holds. Note that $F$ is a copy of the orthogonally reflected square in Example~\ref{ex:square}. The fixed triangle is drawn in Figure~\ref{fig:pyramid}.

The polytope $P$ is the smallest polytope with an obstructing proper face. In dimension $n \le 2$ no proper face can be obstructing: by Lemma~\ref{lem:fixed}(c) a proper obstructing face forces $p \ge q+2$, hence $n+1 = p+q \ge 2q+2 \ge 4$. In dimension $n = 3$, the only type with $p \ge q+2$ and the normal form of $\sigma$ containing a copy of the matrix $U$ is $(2,0,1)$. An obstructing proper face must be a $\sigma$-odd $2$-face. Such a face has no vertex on the line of symmetry, hence it has at least four vertices and a normalised area of at least $2$. Thus the polytope has at least five vertices and normalised volume at least $2$. The polytope $P$ also has $n+2$ vertices that affinely generate $M$ and admits no $\sigma$-invariant lattice triangulation, so \cite[Proposition~4.34]{Sta23} applies. The Effectiveness Conjecture holds for $P$ for the trivial reason that $\varphi[t]$ is not a polynomial.
\end{example}

\begin{figure}[t]
\centering
\begin{tikzpicture}[scale=1.5]
  \draw[gray!50] (-0.3,-0.3) grid[step=1] (1.3,1.3);
  \draw[very thick] (0,0) -- (0.5,0) -- (0.5,1) -- cycle;
  \draw[dashed,thick] (0.5,-0.35) -- (0.5,1.35);
  \foreach \p in {(0,0),(1,0),(0,1),(1,1)} {\fill \p circle (0.05);}
  \fill[white] (0.5,0) circle (0.05); \draw (0.5,0) circle (0.05);
  \fill[white] (0.5,1) circle (0.05); \draw (0.5,1) circle (0.05);
  \node[anchor=west] at (1.7,0.5) {\small \begin{tabular}{l} The fixed triangle $P^\sigma$, in\\ coordinates $(s,r)$. The edge\\ with $s = \frac12$ is odd\end{tabular}};
  \node at (0.05,-0.2) {\small $0$};
\end{tikzpicture}
\caption{The fixed polytope of Example~\ref{ex:pyramid}. A fixed point of $kP$ is a point $(s,s,r)$ with $0 \le r \le 2s \le k$. Its half-integral (unfilled) vertices lie on the $s = \frac12$, which has no lattice points.}
\label{fig:pyramid}
\end{figure}

\begin{example}[Bipyramid {\cite[Example~3.26]{Sta23}}]\label{ex:bipyramid}
Consider the same setup as Example~\ref{ex:pyramid} except that $\alpha = (0,0,1,1)$ and $\sigma$ acts by $\sigma(x,\,y,\,z,\,h) = (-x-z+h,\,y,\,z,\,h)$. As before, the type is $(2,0,1)$, so $p = 3$ and $q = 1$. In this height-one affine subspace, $\sigma$ is a reflection in the line $\{2x+z=1\}$. Let $Q = [0,1]^2\times\{0\}$ be a square and define
\[
P = \conv\{Q,\ a,\ b\} \times \{1\}, \quad \text{where} \quad a = (0,0,1),\quad b = (1,1,-1).
\]
The polytope $P$ is a bipyramid over the square $Q$ with $h^*$-polynomial $h^*_P = 1+2t+t^2$. Both points $a$ and $b$ are $\sigma$-fixed lattice points, and $P^\sigma$ is the quadrilateral with vertices $a$, $b$, $(\frac12,0,0)$ and $(\frac12,1,0)$, which is a non-lattice rational polytope. The two edges of the square $Q$ that cross $\Fix(\sigma)$ are $\sigma$-odd faces of $P$, but each such face $F$ has $\dim F^\sigma = 0 = q-1$, while every $\sigma$-invariant $2$-face contains one of $a$ and $b$. The polytope $P$ itself is not $\sigma$-odd, because the $\sigma$-fixed lattice point $a$ of $P$ has height one. So $P$ has no obstructing face and $\varphi[t]$ is effective. Indeed, the series is $\varphi[t] = 1 + \chi_{\mathrm{reg}}t + t^2$ where $\chi_{\mathrm{reg}} = \chi_0+\chi_1$ is the regular character \cite[Example~3.26]{Sta23}, so its evaluation is $\varphi[t](\sigma) = 1+t^2$. Here $\Ev(k) = \frac12k^2+k+1$ and $\Od(k) = \frac12k^2+k+\frac12$, so their difference is the constant $\frac12$. Note that in Proposition~\ref{prop:poly}, the polynomiality of $\varphi[t]$ does not require the two constituents to be equal, only that $\deg(\Ev-\Od)\le q-1$. Note that $Q$ is $\sigma$-odd with $\dim Q^\sigma = 1$, but it is a cross-section of $P$ and not a face, so it is not an obstructing face. 
\end{example}

\begin{example}[Effective and no invariant l.s.o.p.]\label{ex:TxI}
Let $n = 4$ and fix coordinates for $\ZZ^5$ given by $(x,y_1,y_2,z,h)$. Let $\widetilde M_0 = \{h = 0\}$ be the space orthogonal to $\alpha = (0,0,0,0,1)$, and let $\sigma$ act by swapping coordinates $y_1$ and $y_2$. Then $p = 4$ and $q = 1$, and the action has type $(3,0,1)$. Let
\[
T = \conv\{(0,0,0),\ (2,0,0),\ (1,1,0),\ (1,0,1)\}, \qquad P = T \times [0,1] \times \{1\}.
\]
Then we have:
\begin{itemize}[]
    \item the $h^*$-polynomial is $h^*_P(t) = 1 + 5t + 2t^2$,
    \item the quasi-polynomial $N(k) = \frac12(k+1)^2(k+2)$ is a polynomial, so $\Ev = \Od = N$, and the evaluation of the equivariant $H^*$-series is $\varphi[t](\sigma) = (1+t)(1+2t)$,
    \item the $H^*$-series is effective and given by $\varphi[t] = (1+4t+2t^2)\chi_0 + t\,\chi_1$,
    \item the square $2$-face $F = [(1,1,0),(1,0,1)]\times[0,1]\times\{1\}$ is $\sigma$-odd of dimension $\dim F^\sigma = 1 \ge q = 1$, and so $F$ is obstructing.
\end{itemize}
By Theorem~\ref{thm:lsop}, $P$ satisfies condition (4: $\varphi[t]$ is effective) but not condition (3: $S_P$ has a $\sigma$-stable l.s.o.p.\ of class $[\rho \otimes \CC]$) of Stapledon's hierarchy (Theorem~\ref{thm:stapledon}). So $P$ has no $\sigma$-invariant regular lattice triangulation and no invariant nondegenerate polynomial. The last conclusion also follows from \cite[Lemma~4.29]{Sta23}, since the face $F$ is a unit square with two disjoint edges that have their lattice points in a single $\sigma$-orbit. By Theorem~\ref{thm:lsop}, this example shows that effectiveness does not imply the existence of an l.s.o.p.\ of $S_P$ of class $[\rho\otimes\CC]$. We recall Question~\ref{q:433}, which asks to what extent the conditions in that result are reversible. A remaining part of Question~\ref{q:433} is whether effectiveness implies condition (3$'$), the existence of a $\sigma$-stable l.s.o.p.\ for the toric face ring of a $\sigma$-invariant lattice polyhedral subdivision of $P$. Corollary~\ref{cor:q433} settles that part with the same example. The fixed polytope is $P^\sigma \cong Q_0 \times [0,1]$, where the triangle $Q_0 = \conv\{(0,0),(2,0),(1,\frac12)\}$ shown in Figure~\ref{fig:TxI} has an Ehrhart quasi-polynomial that undergoes period collapse \cite{MW05}. The vertical edge over the non-lattice point of $Q_0$ is an odd face of $P^\sigma$, so the invariant of Remark~\ref{rem:mcmullen} satisfies $k_{\mathrm{odd}} = 1 > q-1$ even though $\varphi[t]$ is polynomial.
\end{example}

\begin{figure}[t]
\centering
\begin{tikzpicture}[scale=1.5]
  \draw[gray!50] (-0.3,-0.3) grid[step=1] (2.3,1.3);
  \draw[very thick] (0,0) -- (2,0) -- (1,0.5) -- cycle;
  \foreach \p in {(0,0),(1,0),(2,0),(0,1),(1,1),(2,1)} {\fill \p circle (0.05);}
  \fill[white] (1,0.5) circle (0.05); \draw (1,0.5) circle (0.05);
  \node at (1,0.72) {\small $(1,\frac12)$};
  \node[anchor=west] at (2.7,0.4) {\small \begin{tabular}{l} $\#(kQ_0 \cap \ZZ^2) = \binom{k+2}{2}$\\ for all $k$: period collapse \end{tabular}};
\end{tikzpicture}
\caption{The fixed polygon of Example~\ref{ex:TxI}. Its apex is half-integral, yet its Ehrhart counting function is a polynomial, which is what makes $\varphi[t]$ polynomial although $P$ has an obstructing face.}
\label{fig:TxI}
\end{figure}

\begin{remark}[Poles and {$\sigma$}-odd faces]\label{rem:nopole}
Example~\ref{ex:TxI} also shows that the presence of a $\sigma$-odd face $F$ does not force a pole of order $\dim F^\sigma + 1$ at $t = -1$ in the generating function $\Ehr_{P^\sigma}$. On the other hand, in \cite[Examples~4.37--4.39]{Sta23}, a linear system of parameters of the correct class exists but there is no invariant nondegenerate polynomial. In those examples, the groups are cyclic groups of order three and five and the Klein four-group. So, those examples do not apply to the cases treated in this paper.
\end{remark}

\begin{example}\label{ex:circuit}
An example for $p = q+1$ of Corollary~\ref{cor:p-eq-q-plus-1} is illustrated by
\cite[Example~4.35]{Sta23}. Let $n = 4$, and write $(u_1,u_2,v_1,v_2,h)$ as coordinates for $\ZZ^5$ with $\widetilde M_0 = \{h = 0\}$ and vector
$\alpha = (0,0,0,0,1)$. Let $\sigma$ act by exchanging the two blocks of two coordinates:
\[
\sigma(u,v,h) = (v,u,h), \qquad \text{for } u,v \in \RR^2 .
\]
The type of $\sigma$ is $(1,0,2)$, so $p = 3$, $q = 2$ and so $p = q+1$. Let $P$ be the convex
hull of the three $\sigma$-orbits
\[
\begin{array}{ll@{\qquad}ll}
e_1 = (1,0,0,0,1), & f_1 = (0,0,1,0,1), &
e_2 = (0,1,0,0,1), & f_2 = (0,0,0,1,1), \\[2pt]
e_3 = (0,0,1,1,1), & f_3 = (1,1,0,0,1). &&
\end{array}
\]
The six points above affinely generate $M$, so $P$ satisfies (Gen), and they are the only lattice points of $P$. The fixed vector $\alpha = (0,0,0,0,1)$ has height one, so the height group is $\hgt(\Lambda) = \ZZ$. Since (Gen) holds, the group $M_P$ is all of $\ZZ^5$ and so $P$ is not $\sigma$-odd. Hence, by Corollary~\ref{cor:p-eq-q-plus-1}, the series $\varphi[t]$ is both polynomial and effective, and indeed $\varphi[t] = 1 + \chi_1 t + t^2$, with $h^*_P = 1+t+t^2$ and normalised volume $\Vol(P) = 3$. The six lattice points form a circuit in $\RR^5$ with relation $e_1+e_2+e_3 = f_1+f_2+f_3$, so $P$ has exactly two lattice triangulations. The involution $\sigma$ exchanges $e_i$ and $f_i$, hence exchanges those two triangulations, so $P$ admits no $\sigma$-invariant lattice triangulation. As explained in \cite[Example~4.35]{Sta23}, this example is a counterexample to \cite[Conjecture~12.4]{Sta11}, since the linear coefficient is nonzero and effective
but contains no copy of the trivial character.
\end{example}

The next example is the counterexample to Conjecture~\ref{conj:eff}. The setup is similar to Example~\ref{ex:circuit}. In both cases $p = q+1$ and have $n+2$ vertices. The main difference is that in the following example (Gen) does not hold, i.e., the vertices of $P$ do not generate the lattice.

\begin{example}[Counterexample with $p = q+1$]\label{ex:counterexample}
Let $n = 6$, and write the coordinates of $\ZZ^7$ as $(u_1, u_2, u_3, v_1, v_2, v_3, h)$ with $\widetilde M_0 = \{h = 0\}$ and vector $\alpha = (0,0,0,0,0,0,1)$. Let $\sigma$ act by exchanging two blocks of three coordinates:
\[
\sigma(u,v,h) = (v,u,h), \qquad \text{for } u,v \in \RR^3 .
\]
The type of $\sigma$ is $(1,0,3)$, so $p = 4$, $q = 3$ and $p = q+1$. Let $P \subseteq \widetilde M$ be the convex hull of the four $\sigma$-orbits
\[
\begin{array}{ll@{\qquad}ll}
w_1 = (1,0,0,0,0,0), & \sigma (w_1) = (0,0,0,1,0,0), &
w_2 = (0,0,1,0,0,0), & \sigma (w_2) = (0,0,0,0,0,1), \\[2pt]
w_3 = (0,1,0,1,1,0), & \sigma (w_3) = (1,1,0,0,1,0), &
w_4 = (0,0,1,1,1,0), & \sigma (w_4) = (1,1,0,0,0,1),
\end{array}
\]
where each point is put at height one, i.e., setting the last coordinate $h = 1$.
Then we have:
\begin{itemize}[]
    \item $P$ is $6$-dimensional with 
    normalised volume $\Vol(P) = 4$, and its eight vertices are its only lattice points, so it has $n+2$ vertices,
    \item the group $M_P$ has index two in $\ZZ^7$, so (Gen) fails, and every $\sigma$-fixed element of $M_P$ has even height, so $P$ is $\sigma$-odd and is therefore obstructing because $\dim P^\sigma = 3 = q$,
    
    \item the fixed polytope $P^\sigma$ is a simplex, see Figure~\ref{fig:cex}, its four vertices
    are half-integral, and $P^\sigma$ contains no lattice point,
    
    \item the fixed vector $\alpha$ has height one, so $\hgt(\Lambda) = \ZZ$ and $\varphi[t]$ is a polynomial by Corollary~\ref{cor:p-eq-q-plus-1},

    \item the two evaluations of the equivariant $H^*$-polynomial are
    \[
    h^*_P(t) = 1 + t + t^3 + t^4, \qquad \varphi[t](\sigma) = 1 - t + 2t^2 - t^3 + t^4,
    \]
    so we have
    \[
    \varphi[t] = (1 + t^2 + t^4)\chi_0+ (t - t^2 + t^3)\chi_1.
    \]
\end{itemize}
In particular, the coefficient $\chi_0 - \chi_1$ of $t^2$ is not effective, so $\varphi[t]$ is a polynomial but is not effective. Hence Conjecture~\ref{conj:eff} does not hold. 
By the hierarchy in Theorem~\ref{thm:stapledon}, we have that the Ehrhart ring $S_P$ has no $\sigma$-stable l.s.o.p.\ of class $[\rho\otimes\CC]$, the polytope has no $\sigma$-invariant lattice triangulation, and no invariant nondegenerate polynomial.

\medskip\noindent\textbf{Consistency.}
As far as we are aware, the example is consistent with the literature and other conjectures. The second dilate $2P$ has $h^*_{2P} = 1+28t+78t^2+84t^3+57t^4+8t^5$ and $\varphi_{2P}[t](\sigma) = (1+t)^4$, which is effective, in accordance with \cite[Corollary~7.14]{Sta11}. Note that Proposition~\ref{prop:dilate} does not apply here, since the polytope $P$ is itself obstructing. The equivariant volume is $\varphi[1] = 3\chi_0 + \chi_1$, a permutation representation, in accordance with \cite[Conjecture~12.2]{Sta11}. But, as we note in Remark~\ref{rem:dead}, that agreement is not surprising. Finally, the eight vertices of $P$ do not affinely generate the affine lattice $M$, so \cite[Proposition~4.34]{Sta23} does not apply; see Corollary~\ref{cor:434-sharp}.
\end{example}

\begin{figure}[t]
\centering
\begin{tikzpicture}[scale=1.35, every node/.style={font=\small}]
  \coordinate (A) at (0,1.3);
  \coordinate (B) at (1.6,0);
  \coordinate (C) at (2.35,1.8);
  \coordinate (D) at (3.1,1.0);
  \coordinate (E) at (2.35,0.5);
  \fill[gray!10] (A) -- (B) -- (D) -- cycle;
  \fill[gray!20] (A) -- (C) -- (D) -- cycle;
  \draw[very thick] (A) -- (B) -- (D) -- cycle;
  \draw[very thick] (A) -- (C) -- (D);
  \draw[very thick,dashed] (B) -- (C);
  \foreach \p in {A,B,C,D} {\fill (\p) circle (0.06);}
  \fill (E) circle (0.06);
  \node[anchor=east]  at (A) {$(0,0,1)$};
  \node[anchor=north] at (B) {$(1,0,0)$};
  \node[anchor=south] at (C) {$(1,1,1)$};
  \node[anchor=west]  at (D) {$(1,2,0)$};
  \node[anchor=north west] at (E) {$(1,1,0)$};
  \node[anchor=west] at (4.3,0.9) {\begin{tabular}{l}The simplex $2P^\sigma$, in coordinates\\ $(s_1,s_2,s_3)$ with $s_j = \frac12(u_j+v_j)$.\\ Its normalised volume is $2$ and it has\\ five lattice points, with $h^*_{2P^\sigma} = 1+t$.\end{tabular}};
\end{tikzpicture}
\caption{Depiction of $2P^\sigma$ from Example~\ref{ex:counterexample}, where each vertex $2\pi(w)$ is labelled by its coordinates $(s_1,s_2,s_3)$. Each vertex of $P^\sigma$ is half-integral, and the fixed polytope $P^\sigma$ contains no lattice point.}
\label{fig:cex}
\end{figure}

The last example is another counterexample to the Effectiveness Conjecture. The effectiveness of the equivariant $H^*$-series fails in the same degree as
Example~\ref{ex:counterexample}, but for a different reason. In this example the polytope is not
$\sigma$-odd, and the obstruction lies on a proper face.

\begin{example}[Counterexample with $p \ge q+2$]\label{ex:counterexample-c2}
Let $n = 6$ and write coordinates for $\ZZ^7$ as $(x_1,x_2,u_1,v_1,u_2,v_2,h)$ with
$\widetilde M_0 = \{h = 0\}$ and vector $\alpha = (0,0,0,0,0,0,1)$. Let
$\sigma$ act by fixing the first two coordinates and exchanging the two pairs:
\[
\sigma(x_1,x_2,u_1,v_1,u_2,v_2,h) = (x_1,x_2,v_1,u_1,v_2,u_2,h).
\]
The type of $\sigma$ is $(3,0,2)$, so $p = 5$, $q = 2$ and $p = q+3$. Let $P \subseteq
\widetilde M$ be the convex hull of the eight points
\[
\begin{array}{llll}
w_1 = (-1,0,0,1,-1,0), & w'_1 = (-1,0,1,0,0,-1), &
w_2 = (0,0,0,1,1,1), & w'_2 = (0,0,1,0,1,1), \\[2pt]
w_3 = (1,-1,1,1,-1,-1), & w'_3 = (1,-1,1,1,0,0), \quad &
w_4 = (1,1,0,1,1,1), & w'_4 = (1,1,1,0,1,1),
\end{array}
\]
where each point is put at height one, i.e., setting the last coordinate $h = 1$. The involution $\sigma$ swaps $w_i$ and $w'_i$ for $i \in \{1,2,4\}$, while $w_3$ and $w'_3$ are each $\sigma$-fixed. So the eight points form three $\sigma$-orbits of size two together with two $\sigma$-fixed points. Then we have:
\begin{itemize}[]
    \item $P$ is $6$-dimensional with normalised volume
    $\Vol(P) = 4$, and its eight vertices are its only lattice points,
    \item the group $M_P$ has index two in $\ZZ^7$, so $P$ does not satisfy (Gen), and the fixed
    vector $\alpha$ has height one, so $\hgt(\Lambda) = \ZZ$,
    \item the polytope is not $\sigma$-odd because $M_P$ contains a $\sigma$-fixed
    element of odd height, but the four-dimensional face
    $
    F = P \cap \{u_1 + v_1 = 1\}
    $
    is $\sigma$-odd with
    $\dim F^\sigma = 2 = q$, so $F$ is obstructing,
    \item the two constituents of the fixed-point count are
    \[
    \Ev(k) = \tfrac1{48}k^4 + \tfrac16 k^3 + \tfrac{13}{24}k^2
             + \tfrac{13}{12}k + 1, \qquad
    \Od(k) = \Ev(k) - \tfrac14 k - \tfrac9{16},
    \]
    so $\deg \Ev = 4 = p-1$ but $\deg(\Ev-\Od) = 1 = q-1$, and $\varphi[t]$ is
    a polynomial by Proposition~\ref{prop:poly},
    \item the two evaluations of the equivariant $H^*$-series are
    \[
    h^*_P(t) = 1 + t + 2t^3 
    \quad \text{and} \quad 
    \varphi[t](\sigma) = 1 - t + 2t^2,
    \]
    so we have
    $
    \varphi[t] = (1 + t^2 + t^3)\chi_0 + (t - t^2 + t^3)\chi_1.
    $
\end{itemize}
In particular, the coefficient $\chi_0 - \chi_1$ of $t^2$ is not effective. So Conjecture~\ref{conj:eff} fails in case (C2).
\end{example}

\section{Searching for counterexamples}\label{sec:open}

In this section we explore what a \emph{minimal} counterexample to the Effectiveness Conjecture should look like, in the sense defined in Question~\ref{q:main} below, and give two reductions. The first reduction bounds the number of vertices for a minimal example, and the second allows us to pass to odd dilates. We then describe our search that produced Examples~\ref{ex:counterexample} and \ref{ex:counterexample-c2}.

\begin{proposition}[Necessary conditions for a counterexample]\label{prop:open}
Suppose $\varphi[t]$ is polynomial but not effective. Then the dimension satisfies $n \ge 3$, the polytope $P$ is not a simplex, and $P$ is not a unit cube with $\sigma$ permuting its coordinates. Moreover $P$ has no $\sigma$-invariant lattice triangulation, and if $P$ has $n+2$ vertices, then those vertices do not affinely generate the affine lattice $M$. The fixed polytope $P^\sigma$ is not a lattice polytope, the height group $\hgt(\Lambda)$ is all of $\ZZ$, and $p \ge q+1$. Finally $P$ has an obstructing face, and either:
\begin{enumerate}[label=(C\arabic*)]
    \item $p = q+1$, in which case the dimension $n \ge 4$ is even, $P$ is $\sigma$-odd and (Gen) does not hold, or
    \item $p \ge q+2$, in which case $\deg(\Ev-\Od) \le q-1$ while $\deg \Ev = p-1 \ge q+1$, so the coefficients of the difference $\Ev - \Od$ in degrees $q, \dots, p-2$ all vanish.
\end{enumerate}
\end{proposition}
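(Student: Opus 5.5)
We argue by contraposition throughout: each listed property is the negation of a hypothesis under which $\varphi[t]$ is already known to be effective (or non-polynomial). So assume $\varphi[t]$ is polynomial but not effective.

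\textbf{General conditions.} The conditions $n \ge 3$, $P$ not a simplex, $P$ not a unit cube with a coordinate permutation, no $\sigma$-invariant lattice triangulation, and ``$n+2$ vertices implies failure of affine generation'' each come from the corresponding item in the list of known cases of Section~\ref{sec:known}. We invoke \cite[Corollary~6.7, Proposition~6.1, Proposition~9.7]{Sta11} and \cite[Theorem~1.4, Proposition~4.34]{Sta23}. Moreover $P^\sigma$ is not a lattice polytope by Corollary~\ref{cor:lattice-fixed}, and $p \ge q+1$ by Corollary~\ref{cor:p-le-q}. Since $p>q$ and $\varphi[t]$ is polynomial, Corollary~\ref{cor:poly}(iv) forces $\hgt(\Lambda)=\ZZ$. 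Finally, $P$ has an obstructing face by Theorem~\ref{thm:lsop}, since otherwise $\varphi[t]$ would be effective.

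\textbf{Case (C1): $p=q+1$.} By Corollary~\ref{cor:p-eq-q-plus-1}, the only possible obstructing face is $P$ itself, so $P$ is $\sigma$-odd. Also, under (Gen), polynomiality would imply effectiveness, so (Gen) fails. For the dimension, $n+1=p+q=2q+1$ gives $n=2q$, which is even. Since $n\ge 3$, we get $n \ge 4$.

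\textbf{Case (C2): $p\ge q+2$.} Proposition~\ref{prop:poly} gives $\deg(\Ev-\Od)\le q-1$. Lemma~\ref{lem:leading} gives $\deg\Ev=p-1\ge q+1$. Hence the coefficients of $\Ev-\Od$ in degrees $q,\dots,p-2$ vanish. In this range the degree-$(p-1)$ coefficient also vanishes, because $\hgt(\Lambda)=\ZZ$ and Lemma~\ref{lem:leading} applies.

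\textbf{Expected difficulty.} Each step is a direct citation, so no step should be substantial. The one point needing care is the (C1) requirement that $P$ itself be $\sigma$-odd. This uses the fact from Lemma~\ref{lem:fixed}(c) that proper faces have $\dim F^\sigma\le p-2=q-1$, so no proper face can be obstructing, which is exactly the content of Corollary~\ref{cor:p-eq-q-plus-1}.
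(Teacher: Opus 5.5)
Your proposal is correct and follows the paper's proof essentially step for step. The general conditions come from the list of known cases together with Corollaries~\ref{cor:lattice-fixed}, \ref{cor:p-le-q}, \ref{cor:poly}(iv) and Theorem~\ref{thm:lsop}, and the two cases are handled by Corollary~\ref{cor:p-eq-q-plus-1}, Proposition~\ref{prop:poly} and Lemma~\ref{lem:leading} exactly as you do. Your extra remark on the degree-$(p-1)$ coefficient is correct but redundant: it already follows from $\deg(\Ev-\Od)\le q-1$, and that degree lies outside the range $q,\dots,p-2$.
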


\begin{proof}
Suppose that $\varphi[t]$ is polynomial but not effective. Then $P$ lies in none of the cases where Conjecture~\ref{conj:eff} is known to hold, which are listed in Section~\ref{sec:known}. So the dimension satisfies $n \ge 3$, the polytope $P$ is not a simplex, $P$ is not a unit cube with $\sigma$ permuting its coordinates, $P$ has no $\sigma$-invariant lattice triangulation, and if $P$ has $n+2$ vertices then those vertices do not affinely generate the lattice. By Corollary~\ref{cor:lattice-fixed}, the fixed polytope $P^\sigma$ is not a lattice polytope. By Corollary~\ref{cor:p-le-q}, we have $p \ge q+1$, and by Theorem~\ref{thm:lsop}, the polytope $P$ must have an obstructing face. Since $p > q$, if the height group were $2\ZZ$, then $\varphi[t]$ would not be polynomial by Corollary~\ref{cor:poly}(iv), so the height group is $\ZZ$. That proves every claim before the two cases.

It remains to establish the two cases, which we take according to the value of $p$. Consider first case (C1), so that $p = q+1$. By Corollary~\ref{cor:p-eq-q-plus-1}, the polytope $P$ is $\sigma$-odd and the condition (Gen) does not hold. In this case $n = 2q$ is even, and since $n \ge 3$, it follows that $n \ge 4$. So the claims of (C1) hold.

Consider now case (C2), so that $p \ge q+2$. By Proposition~\ref{prop:poly}, we have $\deg(\Ev-\Od)\le q-1$. By Lemma~\ref{lem:leading}, the polynomial $\Ev$ has degree $p-1 \ge q+1$. The bound $\deg(\Ev-\Od)\le q-1$ kills every coefficient of the difference $\Ev-\Od$ in degrees $q,\dots,p-2$. So the claims of (C2) hold.

We also record a bound that the statement does not claim. By Lemma~\ref{lem:fixed}(a), the number $c$ of copies of the matrix $U$ in the normal form of $\sigma$, see Section~\ref{sec:order-two}, satisfies $c \ge 1$. The search of Section~\ref{sec:search} uses that bound to restrict the possible types.
\end{proof}

In particular, a counterexample must have an obstructing face, so by Theorem~\ref{thm:lsop}, it must fail conditions (1), (2) and (3) of Stapledon's hierarchy (Theorem~\ref{thm:stapledon}). 
Case (C1) is realised by Example~\ref{ex:counterexample}, and case (C2) by Example~\ref{ex:counterexample-c2}.

Suppose that $\varphi[t]$ fails to be effective. Then some coefficient of $\varphi[t]$ is not effective. In the next remark, we narrow the range of degrees in which such a coefficient can lie in case (C1).

\begin{remark}[Degrees for which effectiveness can fail]\label{rem:mechanism}
Suppose $P$ lies in case (C1), so $p = q+1$ and $P$ is $\sigma$-odd. Then effectiveness can fail only in degrees $2, \dots, n$. For the upper end of that range, the polynomial $H$ of Lemma~\ref{lem:Hform} has degree at most $2p-1$ by \cite[Theorem~2.8]{Sta80}, so the evaluation $\varphi[t](\sigma) = (1+t)^{q-p}H(t)$ has degree at most $p+q-1 = n$, while $h^*_P$ has degree at most $n$ as well. Hence every coefficient of $\varphi[t]$ in degree greater than $n$ vanishes. Note that in degree zero the two evaluations are $(h^*_P)_0 = 1$ and $\varphi_0(\sigma) = 1$, so the constant coefficient is the trivial character $\varphi_0 = \chi_0$. For degree one, expanding Equation~\eqref{eq:evals} gives
\[
\varphi_1(\sigma) = N(1) + (q-p)N(0) = N(1) - 1.
\]
Here $N(1) = 0$ because a $\sigma$-fixed lattice point of $P$ would lie in $M_P \cap \Fix(\sigma)$ at height one, which contradicts the assumption that $P$ is $\sigma$-odd. So $\varphi_1(\sigma) = -1$. Also, since $P$ has no $\sigma$-fixed lattice points, every $\sigma$-orbit of a lattice point in $P$ has size two. Therefore, $P$ contains an even number of lattice points. Since $n+1 = 2q+1$ is odd, we have
\[
(h^*_P)_1 = \#(P \cap M) - (n+1)
\]
is odd and nonnegative. Hence $(h^*_P)_1 \ge 1$ and so 
\[
\varphi_1 = \frac{(h^*_P)_1 - 1}{2} \chi_0 + \left(1 + \frac{(h^*_P)_1 - 1}{2} \right)\chi_1
\]
is an effective character. We note that the polytope in Example~\ref{ex:counterexample} has as few vertices as possible with $\#(P\cap M) = n+2 = 8$ and $(h^*_P)_1 = 1$.
\end{remark}

\subsection{Reduction by monotonicity}\label{sec:monotonicity}
Observe that by Equation~\eqref{eq:evals}, the evaluation $\varphi[t](\sigma)$ depends only on $\sigma$ and $P^\sigma$. Using Stanley's monotonicity theorem \cite{Sta93} allows us to give a reduction to a polytope with fewer vertices. The same technique appears in the work of Bey, Henk and Wills on the bound \cite[Remark~1.6]{BHW}: a centrally symmetric lattice polytope contains a centrally symmetric lattice cross-polytope, and Stanley's monotonicity theorem then gives the bound. Stapledon uses a similar idea \cite[Example~2.16]{Sta23} by comparing the convex hull of a minimal set of orbits with the ambient polytope. The reduction from $P$ to a subpolytope $Q\subseteq P$ that we present keeps the same fixed polytope, hence the evaluations $\varphi_P[t](\sigma)$ and $\varphi_Q[t](\sigma)$ coincide. We note that this differs from the equivariant monotonicity in \cite[Theorem~1.7]{Sta23}, which compares two equivariant $H^*$-series directly but assumes the larger polytope has a $G$-invariant lattice triangulation that restricts to the smaller one. This strict hypothesis does not apply in either case (C1) or case (C2), while the reduction below uses only the classical monotonicity of $h^*$-polynomials.

\begin{proposition}[Monotonicity]\label{prop:monotone}
Let $Q \subseteq P$ be an $n$-dimensional $\sigma$-invariant lattice subpolytope with the same fixed polytope $Q^\sigma = P^\sigma$. Then $\varphi_Q[t](\sigma) = \varphi_P[t](\sigma)$. Moreover $\varphi_Q$ is polynomial if and only if $\varphi_P$ is polynomial, and $\varphi_P$ is effective if $\varphi_Q$ is effective.
\end{proposition}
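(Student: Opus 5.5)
The argument has three parts: the $\sigma$-evaluation is determined by the fixed polytope, polynomiality is therefore inherited, and effectiveness transfers through Stanley's monotonicity.

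\emph{Step 1: the $\sigma$-evaluations agree.} Since $Q$ and $P$ are both $n$-dimensional and $\sigma$-invariant in the same ambient setup, they share $p$ and $q$. By Equation~\eqref{eq:evals},
\[
\varphi_Q[t](\sigma) = (1-t)^p(1+t)^q\Ehr_{Q^\sigma}(t).
\]
Because $Q^\sigma = P^\sigma$, the counts $\#(kQ^\sigma\cap\ZZ^{n+1})$ and $\#(kP^\sigma\cap\ZZ^{n+1})$ agree for every $k$. Hence $\varphi_Q[t](\sigma) = \varphi_P[t](\sigma)$.

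\emph{Step 2: polynomiality.} The evaluation at $e$ is always a polynomial, namely $h^*_Q$ or $h^*_P$. So, as in the proof of Proposition~\ref{prop:poly}, $\varphi$ is polynomial exactly when its $\sigma$-evaluation is polynomial. By Step~1 these $\sigma$-evaluations coincide, so $\varphi_Q$ is polynomial if and only if $\varphi_P$ is. Equivalently, the constituents $\Ev$ and $\Od$ are the same for $P$ and $Q$, and Proposition~\ref{prop:poly} applies to both.

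\emph{Step 3: effectiveness.} Suppose $\varphi_Q$ is effective. By the cited remark \cite[Remark~2.13]{Sta23}, effective implies polynomial, so by Step~2 $\varphi_P$ is also polynomial. By Proposition~\ref{prop:eff}, effectiveness of $\varphi_Q$ gives $|\varphi_{Q,i}(\sigma)| \le (h^*_Q)_i$ for all $i$. Stanley's monotonicity theorem \cite{Sta93} for lattice polytopes $Q\subseteq P$ gives $(h^*_Q)_i \le (h^*_P)_i$ coefficientwise. Combining this with $\varphi_{P,i}(\sigma)=\varphi_{Q,i}(\sigma)$ from Step~1 yields
\[
|\varphi_{P,i}(\sigma)| \le (h^*_P)_i \quad\text{for all } i,
\]
and Proposition~\ref{prop:eff} then shows that $\varphi_P$ is effective.

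\emph{Main obstacle.} The only point needing care is that monotonicity applies in this homogeneous setup. Here $Q$ and $P$ are lattice polytopes in the same affine lattice $M$, and $Q\subseteq P$. Normalised volumes and Ehrhart series are computed in that same lattice for both polytopes, so Stanley's theorem applies directly. No invariant triangulation is needed, since the comparison is made only through the two evaluations at $e$ and at $\sigma$.
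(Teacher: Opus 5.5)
Your proof is correct and matches the paper's argument essentially step for step. The $\sigma$-evaluation depends only on $\sigma$ and the fixed polytope by Equation~\eqref{eq:evals}, polynomiality is decided by that evaluation, and effectiveness transfers through Proposition~\ref{prop:eff} together with Stanley's monotonicity $(h^*_Q)_i \le (h^*_P)_i$; the detour in Step~3 through polynomiality of $\varphi_P$ is harmless but unnecessary, since Proposition~\ref{prop:eff} already covers every index $i$.
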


\begin{proof}
By Equation~\eqref{eq:evals} the evaluation $\varphi[t](\sigma)$ depends only on $\sigma$ and on the fixed polytope $P^\sigma$. Since $Q$ is $n$-dimensional in the same ambient lattice and has $Q^\sigma = P^\sigma$, it follows that $\varphi_Q[t](\sigma) = \varphi_P[t](\sigma)$. The two evaluations at $\sigma$ agree, and the evaluation of an equivariant $H^*$-series at the identity is always a polynomial, so $\varphi_Q$ is polynomial if and only if $\varphi_P$ is polynomial. If $\varphi_Q$ is effective then, by Proposition~\ref{prop:eff} and Stanley's monotonicity theorem \cite{Sta93}, we have
\[
|\varphi_{P,i}(\sigma)| = |\varphi_{Q,i}(\sigma)| \le (h^*_Q)_i \le (h^*_P)_i \quad\text{for all } i,
\]
so $\varphi_P$ is effective.
\end{proof}

\begin{corollary}[Monotonic reduction]\label{cor:few-vertices}
Let $v$ be the number of vertices of $P^\sigma$. Then $P$ contains an $n$-dimensional $\sigma$-invariant lattice subpolytope $Q$ with the same fixed polytope $Q^\sigma = P^\sigma$ and at most $2(v+n)$ vertices. In particular, Conjecture~\ref{conj:eff} holds in general if it holds for each $\sigma$-invariant lattice polytope with at most $2(v+n)$ vertices, where $v$ is the number of vertices of its fixed polytope.
\end{corollary}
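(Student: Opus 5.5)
The idea is to build $Q$ as the convex hull of a few $\sigma$-orbits of vertices of $P$, so that $Q$ is automatically a $\sigma$-invariant lattice polytope contained in $P$. The orbits are chosen so that two things hold: $Q^\sigma$ has every vertex of $P^\sigma$, and $Q$ is full-dimensional. The ``in particular'' clause then follows from Proposition~\ref{prop:monotone}. Suppose $P$ has polynomial $\varphi_P$. Then $\varphi_Q$ is polynomial, since $Q^\sigma = P^\sigma$. If the conjecture holds for $Q$, then $\varphi_Q$ is effective, and hence so is $\varphi_P$.

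\emph{Step 1 (fixed polytope).} By Lemma~\ref{lem:fixed}(a), every vertex $\gamma$ of $P^\sigma$ equals $\pi(u_\gamma)$ for some vertex $u_\gamma$ of $P$. For each of the $v$ vertices of $P^\sigma$, pick one such $u_\gamma$ and include its orbit $\{u_\gamma,\sigma(u_\gamma)\}$, which contributes at most two points. Let $Q_1$ be the convex hull of these at most $2v$ points. Then $\pi(Q_1)$ contains every $\gamma = \pi(u_\gamma)$, so $P^\sigma \subseteq \pi(Q_1)$. Since $Q_1$ is $\sigma$-invariant, Lemma~\ref{lem:fixed}(a) gives $\pi(Q_1) = Q_1^\sigma$, so $P^\sigma \subseteq Q_1^\sigma$. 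The reverse inclusion holds because $Q_1 \subseteq P$, so $Q_1^\sigma = P^\sigma$.

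\emph{Step 2 (full dimension).} Extend $Q_1$ greedily. As long as the affine span of the current polytope is not all of $\widetilde M$, some vertex $w$ of $P$ lies outside it, since the vertices of $P$ affinely span $\widetilde M$. Add the orbit $\{w,\sigma(w)\}$. Each such step raises the dimension by at least one and adds at most two points. At most $n$ steps are needed, and in fact fewer, since $Q_1$ already has dimension at least $\dim P^\sigma = p-1$. After at most $n$ steps we obtain a $\sigma$-invariant, $n$-dimensional lattice polytope $Q\subseteq P$ with at most $2v+2n$ vertices.

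\emph{Step 3 (fixed polytope preserved).} Enlarging from $Q_1$ to $Q$ keeps the fixed polytope sandwiched between the two equal sets:
\[
P^\sigma = Q_1^\sigma \subseteq Q^\sigma \subseteq P^\sigma.
\]
Hence $Q^\sigma = P^\sigma$. The vertices of $Q$ are among the chosen points, so the vertex bound holds.

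The only delicate point is Step 1: one must use that the vertices of the fixed polytope of \emph{any} invariant polytope are images under $\pi$ of its own vertices, together with $\pi(Q_1) = Q_1^\sigma$, which is the equality of Lemma~\ref{lem:fixed}(a). With that in hand, the remaining steps are bookkeeping.
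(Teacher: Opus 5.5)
Your proposal is correct and follows essentially the same route as the paper. You take one $\sigma$-orbit over each vertex of $P^\sigma$, add at most $n$ further orbits to reach full dimension, sandwich the fixed polytope between $Q_1^\sigma$ and $P^\sigma$, and deduce the reduction from Proposition~\ref{prop:monotone}. The differences are cosmetic: you add orbits of vertices rather than arbitrary lattice points of $P$, and you cite Lemma~\ref{lem:fixed}(a) for $\pi(Q_1) = Q_1^\sigma$ where the paper simply notes that each vertex of $P^\sigma$ is the midpoint of $u_\gamma$ and $\sigma(u_\gamma)$. That midpoint observation is also the elementary reason the identity holds for the possibly lower-dimensional $Q_1$.
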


\begin{proof}
Let $v$ be the number of vertices of $P^\sigma$. For each vertex $w$ of $P^\sigma$, pick a vertex $v_w$ of $P$ such that $\pi(v_w) = w$, which exists by Lemma~\ref{lem:fixed}(a). Let $Q_0$ be the convex hull of all vertices $v_w$ and $\sigma(v_w)$. Then $Q_0$ is a $\sigma$-invariant subpolytope of $P$, and each vertex $w$ of $P^\sigma$ is the midpoint of the pair $v_w$ and $\sigma(v_w)$, so the point $w$ lies in $Q_0^\sigma$. Hence the fixed part $Q_0^\sigma \subseteq P^\sigma$ contains every vertex of $P^\sigma$, so the two fixed polytopes are equal. We form the polytope $Q$ as the convex hull of the vertices of $Q_0$ together with at most $n$ additional $\sigma$-orbits of lattice points of $P$, chosen so that $Q$ is $n$-dimensional; each added orbit raises the dimension of the convex hull by at least one, so at most $n$ orbits are needed. Adding those orbits does not change the fixed polytope. So $Q$ is an $n$-dimensional $\sigma$-invariant lattice subpolytope of $P$ with $Q^\sigma = P^\sigma$ and at most $2(v+n)$ vertices, which is the first claim.

By Proposition~\ref{prop:monotone}, the series $\varphi_P[t]$ is effective whenever $\varphi_Q[t]$ is effective, and $\varphi_P[t]$ is polynomial exactly when $\varphi_Q[t]$ is polynomial. Hence Conjecture~\ref{conj:eff} holds in general if the conjecture holds for every $\sigma$-invariant lattice polytope with at most $2(v+n)$ vertices, where $v$ is the number of vertices of its fixed polytope.
\end{proof}

The lowest-dimensional instance of case (C2) has $n = 3$ and type $(2,0,1)$. To check the Effectiveness Conjecture there, it suffices to search for polytopes whose vertices consist of one $\sigma$-orbit over each vertex of the fixed polygon, together with at most three extra orbits so that the polytope has the correct dimension. The search described in Section~\ref{sec:search} does not cover that instance, which we raise as Question~\ref{q:dim3}.

\subsection{Reduction by dilation}\label{sec:dilates} 
By \cite[Remark~7.2]{Sta11}, the equivariant $H^*$-series of a dilate $mP$ is a polynomial whenever the exponent of the group, the least common multiple of the orders of its elements, divides $m$. In our case, the group has order two so every even dilate of $P$ has polynomial equivariant $H^*$-series. By \cite[Corollary~7.14]{Sta11}, the $H^*$-series is effective if the order of the group divides $m$. The proof of that corollary uses \cite[Corollary~7.10]{Sta11}, so the erratum in \cite{Sta23} lists it among the results whose published proofs are incomplete. However, this result can be deduced by combining \cite[Theorem~1.4 and Theorem~1.5]{Sta23}. In the proposition below, we show that the property of having no obstructing faces is preserved by taking odd dilations.

\begin{proposition}[Dilates]\label{prop:dilate}
Let $m$ be an odd positive integer.
\begin{enumerate}[label=(\alph*)]
    \item The series $\varphi_{mP}$ is polynomial if and only if $\varphi_P$ is polynomial.
    \item If $P$ has no obstructing face, then the dilate $mP$ has no obstructing face either.
\end{enumerate}
Moreover, if $P$ has no obstructing face, then $\varphi_{kP}$ is effective for every positive integer $k$.
\end{proposition}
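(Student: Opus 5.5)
Part (a) is a computation with fixed-point counts. The polytope $mP$ has fixed polytope $(mP)^\sigma = mP^\sigma$, and $N_{mP}(k) = N_P(mk)$. Since $m$ is odd, $mk$ has the same parity as $k$. So the constituents of $N_{mP}$ are
\[
\Ev^{mP}(k) = \Ev(mk), \qquad \Od^{mP}(k) = \Od(mk).
\]
Hence $\Ev^{mP} - \Od^{mP} = (\Ev-\Od)(mk)$, which has the same degree as $\Ev-\Od$, with $-\infty$ preserved. The eigenspace dimensions $p,q$ do not depend on $P$. Proposition~\ref{prop:poly} then gives that $\varphi_{mP}$ is polynomial if and only if $\varphi_P$ is.

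For (b), the faces of $mP$ are exactly the dilates $mF$ of faces $F$ of $P$. Moreover $mF$ is $\sigma$-invariant iff $F$ is, and $(mF)^\sigma = mF^\sigma$ has the same dimension as $F^\sigma$. So it suffices to show that if $mF$ is $\sigma$-odd then $F$ is $\sigma$-odd. I argue the contrapositive: suppose $F$ is $\sigma$-invariant but not $\sigma$-odd, so some $\sigma$-fixed $u \in M_F$ has odd height. I want an element of $M_{mF}$ that is $\sigma$-fixed with odd height.

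The key observation is that $M_{mF}$ contains $m M_F$-type combinations. For lattice points $v_1,\dots,v_m$ of $F$, the sum $v_1+\dots+v_m$ is a lattice point of $mF$. Taking differences shows that $M_{mF}$ contains $v-v'$ for all lattice points $v,v'$ of $F$, and contains $mv$. Hence $M_{mF} \supseteq \{x \in M_F : \hgt(x) \in m\ZZ\}$. Indeed, any $x \in M_F$ equals $\hgt(x)\,v_0$ plus an integer combination of differences $v-v'$, for a fixed lattice point $v_0$ of $F$. If $m \mid \hgt(x)$, this lies in $M_{mF}$.

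Now take $mu$. It is $\sigma$-fixed, lies in $M_F$, and has height $m\hgt(u)$, which is odd times odd and so odd. Its height is divisible by $m$, so $mu \in M_{mF}$. Thus $mF$ is not $\sigma$-odd. Consequently an obstructing face $mF$ of $mP$ would force $F$ to be obstructing in $P$, proving (b). The main point to check carefully is this lattice containment, in particular that $M_F$ is generated by one lattice point together with differences, which holds since every generator has the form $v = v_0 + (v-v_0)$.

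For the final claim, write $k = 2^e m$ with $m$ odd. If $e=0$, then $kP$ has no obstructing face by (b), so $\varphi_{kP}$ is effective by Theorem~\ref{thm:lsop}. If $e \ge 1$, the order of $G$ divides $k$, so $\varphi_{kP}$ is effective by \cite[Corollary~7.14]{Sta11}, whose proof is repaired via \cite[Theorems~1.4 and 1.5]{Sta23} as noted above. If I prefer not to rely on that citation, I would instead try to show directly that $2F$ is never $\sigma$-odd for invariant $F$. For a lattice point $v$ of $F$, the point $v+\sigma v \in 2F$ is $\sigma$-fixed of height $2$, so this would not suffice. This is why I would lean on the cited result for even $k$, and this is the step I expect to be the weakest part of the argument.
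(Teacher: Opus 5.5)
Your proof is correct and follows the paper's argument essentially step for step, including the appeal to \cite[Corollary~7.14]{Sta11} for even $k$. The one difference is in (b): the paper gets $mu\in M_{mF}$ at once by writing $u=\sum_i c_iv_i$ with $v_i$ lattice points of $F$, so $mu=\sum_i c_i(mv_i)$ with $mv_i\in mF$; your containment $\{x\in M_F : m\mid \hgt(x)\}\subseteq M_{mF}$ is valid but not needed.

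Your closing worry about even $k$ is misplaced. Once $kP$ is placed at height one as in Remark~\ref{rem:rescale}, for even $k$ the point $\frac k2(v+\sigma v)\in kF$ is a $\sigma$-fixed lattice point at height one, and $(kP)^\sigma=kP^\sigma$ is a lattice polytope. So Corollary~\ref{cor:lattice-fixed} already gives the even case without the external citation.
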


\begin{proof}
(a) Let $m$ be an odd positive integer. The involution $\sigma$ does not change when passing to a dilate, so the numbers $p$ and $q$ are the same for $P$ and for the dilate $mP$, and the fixed lattice point count satisfies $N_{mP}(k) = \#(km P^\sigma\cap\ZZ^{n+1}) = N_P(mk)$. Since $m$ is odd, the parity of $mk$ is the same as that of $k$, so $k\mapsto \Ev(mk)$ and $k\mapsto \Od(mk)$ are the constituents of the fixed lattice point count of $mP$. Their difference is $(\Ev-\Od)(mk)$, which has the same degree as the difference for $P$. Since the criterion of Proposition~\ref{prop:poly} involves only that degree and the number $q$, which is unchanged, the series $\varphi_{mP}$ is polynomial if and only if $\varphi_P$ is polynomial.

(b) Suppose that $P$ has no obstructing face. Each face of $mP$ is of the form $mF$ for a face $F$ of $P$ and has $(mF)^\sigma = mF^\sigma$. In particular $(mF)^\sigma$ and $F^\sigma$ have the same dimension. Suppose the face $F$ is not $\sigma$-odd. If $\sigma(F) \neq F$, then $\sigma(mF) \neq mF$, so $mF$ is not $\sigma$-odd. Otherwise $M_F$ contains a $\sigma$-fixed element $u$ of odd height, so the element $mu$ lies in the group $M_{mF}$, is $\sigma$-fixed, and has odd height because $m$ is odd. Hence $mF$ is not $\sigma$-odd in either case. So every $\sigma$-odd face of $mP$ comes from a $\sigma$-odd face of $P$. Since the dilation respects the dimension of the fixed part of a face, it follows that $mP$ has no obstructing face.

For the last claim, let $k$ be a positive integer. If $k$ is odd, then by part (b) the dilate $kP$ has no obstructing face, so $\varphi_{kP}[t]$ is effective by Theorem~\ref{thm:lsop}. If $k$ is even, then effectiveness of $\varphi_{kP}[t]$ follows from \cite[Corollary~7.14]{Sta11}.
\end{proof}

In particular, by \cite[Corollary~7.14]{Sta11}, no counterexample to Conjecture~\ref{conj:eff} has the form $2P$ for a lattice polytope $P$. For instance, in Example~\ref{ex:counterexample} the second dilate has $\varphi_{2P}[t](\sigma) = (1+t)^4$, while the coefficients of $h^*_{2P}$ are much larger: the $h^*$-vector of $2P$ is $(1, 28, 78, 84, 57, 8)$. Hence $\varphi_{2P}[t]$ is effective by Proposition~\ref{prop:eff}.

\subsection{Randomised search}\label{sec:search}

In this section, we describe how Examples~\ref{ex:counterexample} and \ref{ex:counterexample-c2} were found.
Proposition~\ref{prop:open} tells us that counterexamples fall into one of two types (C1) and (C2), and Corollary~\ref{cor:few-vertices} bounds the number of vertices of minimal counterexamples.
Our general approach is to pick a candidate fixed polytope $R$ and compute $\varphi[t](\sigma)$ from $R$. Then we search for small lattice polytopes $P$ with $P^\sigma = R$. Recall from Section~\ref{sec:order-two} the normal form of $\sigma$ of type $(a,b,c)$, together with the basis of $\ZZ^{n+1}$ given by: $x_1,\dots,x_a$ that are pointwise fixed by $\sigma$; $y_1,\dots,y_b$ that are each negated by $\sigma$; and
$z_1, \dots, z_c, w_1, \dots, w_c$ where $\sigma$ swaps $z_i$ and $w_i$. The fixed space $\Fix(\sigma)$ then has coordinates $(x,s)$, where $x = (x_1,\dots,x_a)$ and $s_j = \frac12(z_j+w_j)$ for $1 \le j \le c$. In these coordinates the fixed polytope $R$ has vertices in $\ZZ^a \times (\frac12\ZZ)^c$, so we work with the dilate $2R$, whose vertices lie in $(2\ZZ)^a \times \ZZ^c$. Recall also that $R$ lies in the affine hyperplane $\{\hgt = 1\}$ of $\Fix(\sigma)$ and has dimension $p-1 = a+c-1$, so the dilate $2R$ lies in $\{\hgt = 2\}$.

In the first stage, we construct a candidate $2R$ as the convex hull of $k$ points of height two in $(2\ZZ)^a \times \ZZ^c$, with $a+c+1 \le k \le a+c+3$. Spanning the affine hull of $2R$ takes at least $a+c$ points, so the lower limit means we always take more points than a simplex of that dimension has vertices. We take these $k$ points uniformly at random from a large box. We compute the Ehrhart series of $R$, which gives us the evaluation of the equivariant $H^*$-series of the final polytope $\varphi[t](\sigma) = (1-t)^p(1+t)^q\,\mathrm{Ehr}_R(t)$. We immediately discard the candidate if any of the following occur: 
\begin{itemize}
    \item $R$ is a lattice polytope, see Corollary~\ref{cor:lattice-fixed},
    \item $\varphi[t](\sigma)$ is not a polynomial,
    \item in the case (C1), if $R$ contains a lattice point in $\ZZ^{a+c}$, then $P$ has a $\sigma$-fixed lattice point and is not $\sigma$-odd,
    \item in case (C2), if $R$ has no face $\Gamma$ with $\dim \Gamma \ge q$ and $\Gamma \cap \ZZ^{a+c} = \emptyset$.
\end{itemize}
We note that the last condition is necessary because an obstructing face $F$ of $P$ is $\sigma$-invariant and $\sigma$-odd, so $F^\sigma$ is a face of $R$ of dimension at least $q$ that has no lattice point.
For case (C2), we modify the search so that at least $a+c$ of the chosen points of $2R$ lie on the hyperplane $\{s_1 = 1\}$ and the remaining points lie in $\{s_1 \ge 2\}$.
The corresponding facet of $R$ then lies in $\{s_1 = \frac12\}$, so that facet
contains no lattice point, hence $R$ has a facet free of lattice points.

The next stage is to lift $R$ to a minimal lattice polytope. By Corollary~\ref{cor:few-vertices}, it suffices to find one $\sigma$-orbit for each vertex of $R$, together with at most $n$ further orbits.
Let $(\xi, s)$ be a vertex of $R$ in the coordinates above. The fibre of the projection map $\pi$ from Section~\ref{sec:order-two} over this point is parametrised by $(y,d) \in \ZZ^b \times \ZZ^c$ with $d_j \equiv 2s_j \pmod 2$. Explicitly, the points are
\[
(x,y,z,w) = \left(\xi, y, s + \tfrac12 d, s - \tfrac12 d\right).
\]
We consider all points in the fibre whose entries are bounded in absolute value by some constant $D$. If there are relatively few options then we go through them exhaustively, otherwise we sample them 
uniformly. 
So, by construction, the convex hull of these orbits has fixed polytope $R$ and dimension
$\dim R + \operatorname{rank}(\Phi) = p - 1 + \operatorname{rank}(\Phi)$, where $\Phi$ is the matrix whose columns are the fibre vectors $(y,d) \in \ZZ^{b+c}$ for each orbit. So the lift
is $n$-dimensional if the chosen fibre vectors span $\RR^{b+c}$. If they do not span, then we add further orbits over vertices of $R$ until the polytope is $n$-dimensional, which does not change the fixed polytope.
Note that if the vertices of a lift are all congruent modulo two, then $v + \sigma(v) \equiv 2v \equiv 0$ modulo two for each vertex $v$, so every vertex $\pi(v)$ of the fixed polytope is a lattice point. The series of such a lift is then effective by Corollary~\ref{cor:lattice-fixed}, and we discard it. In case (C2), we discard the lift if the polytope $P$ has no obstructing
face, since otherwise $\varphi[t]$ is effective by Theorem~\ref{thm:lsop}.
Finally, we compute $h^*_P$ and compare its coefficients with $|\varphi_i(\sigma)|$ to determine effectiveness.

We computed the Ehrhart series using \textit{Normaliz~3.11.0}
\cite{Normaliz} through \textit{PyNormaliz~2.24}.  
The number of
candidates and lifts examined in each case is recorded in Table~\ref{tab:search}. We conclude this section with the questions that follow from the computations.

\begin{table}[t!]
\centering
\setlength{\tabcolsep}{4pt}
\begin{tabular}{@{}ccc rrrr@{}}
\toprule
\shortstack{Type\\$(a,b,c)$} & $n$ & $(p,q)$ & \shortstack{Fixed\\polytopes} & Lifts
  & \shortstack{Non-effective\\lifts} & \shortstack{Fixed polytopes\\with a non-\\effective lift} \\
\midrule
\multicolumn{7}{@{}l}{\emph{Case} (C1)} \\
$(1,0,2)$ & $4$ & $(3,2)$ & 88 & 2\,025 & 0 & 0 \\
$(2,1,1)$ & $4$ & $(3,2)$ & 0 & 0 & 0 & 0 \\
$(1,0,3)$ & $6$ & $(4,3)$ & 11\,746 & 55\,543 & 1\,416 & 480 \\
$(2,1,2)$ & $6$ & $(4,3)$ & 659 & 175\,290 & 12\,012 & 360 \\
$(3,2,1)$ & $6$ & $(4,3)$ & 139 & 139\,000 & 0 & 0 \\
$(1,0,4)$ & $8$ & $(5,4)$ & 4\,546 & 220\,917 & 8\,469 & 554 \\
\midrule
\multicolumn{7}{@{}l}{\emph{Case} (C2)} \\
$(2,0,1)$ & $3$ & $(3,1)$ & 0 & 0 & 0 & 0 \\
$(3,0,1)$ & $4$ & $(4,1)$ & 4\,146 & 30\,013 & 0 & 0 \\
$(2,0,2)$ & $5$ & $(4,2)$ & 927 & 1\,596 & 0 & 0 \\
$(4,0,1)$ & $5$ & $(5,1)$ & 25 & 25 & 0 & 0 \\
$(3,1,1)$ & $5$ & $(4,2)$ & 0 & 0 & 0 & 0 \\
$(3,0,2)$ & $6$ & $(5,2)$ & 454 & 1\,437 & 51 & 26 \\
$(4,1,1)$ & $6$ & $(5,2)$ & 273 & 57\,765 & 247 & 43 \\
$(5,0,1)$ & $6$ & $(6,1)$ & 1 & 1 & 0 & 0 \\
$(2,0,3)$ & $7$ & $(5,3)$ & 725 & 7\,130 & 140 & 26 \\
$(4,0,2)$ & $7$ & $(6,2)$ & 4 & 20 & 0 & 0 \\
$(3,0,3)$ & $8$ & $(6,3)$ & 75 & 2\,136 & 16 & 4 \\
$(2,0,4)$ & $9$ & $(6,4)$ & 173 & 10\,320 & 138 & 8 \\
\bottomrule
\end{tabular}
\caption{Candidates from the search in Section~\ref{sec:search}. 
The \emph{Fixed polytopes} column counts the
candidates $R$ that passed the first stage and \emph{Lifts} records the lifts for which $h^*_P$ was computed. 
For types $(2,1,1)$ in case (C1), and for type $(3,1,1)$ in case (C2), no candidate passed the first stage.}
\label{tab:search}
\end{table}

\begin{question}[Minimal counterexamples]\label{q:main}
Both Examples~\ref{ex:counterexample} and \ref{ex:counterexample-c2} have $n+2$ lattice points, normalised volume four, and an
$h^*$-polynomial with coefficient zero in degree two. Call a counterexample \emph{minimal} if it has no proper $n$-dimensional $\sigma$-invariant lattice subpolytope with the same fixed polytope, so that Proposition~\ref{prop:monotone} gives no further reduction. Is a minimal counterexample forced to have these properties?
\end{question}

\begin{question}[Dimension three]\label{q:dim3}
For $n = 3$ and type $(2,0,1)$, is there a lattice polytope where $\varphi[t]$ is polynomial but $P$ has an obstructing face? 
\end{question}

\begin{question}[Least dimension]\label{q:small}
What is the least dimension of a counterexample to Conjecture~\ref{conj:eff}? By Proposition~\ref{prop:open} it is at least three, and by Proposition~\ref{prop:open}(C1) a counterexample in case (C1) has even dimension at least four. Examples~\ref{ex:counterexample} and \ref{ex:counterexample-c2} give an upper bound of six.
\end{question}

\begin{question}[Classification]\label{q:classify}
Is it possible to classify $\sigma$-invariant lattice polytopes in cases (C1) and (C2) that fail to be effective? 
\end{question}

\section{Proof of Theorem~\ref{thm:lsop}}\label{sec:proofs}

In this section we prove Theorem~\ref{thm:lsop}.
Before the proof, we give some preliminaries on toric varieties.
Throughout, the group $G = \langle\sigma\rangle$ has order two and we use the notation of Section~\ref{setup} and Section~\ref{sec:order-two}: the map $\hgt$ is the height, the lattice $\Lambda = (\ZZ^{n+1})^\sigma$ is the fixed lattice, the numbers $p$ and $q$ are the dimensions of the $(+1)$ and $(-1)$-eigenspaces of the involution $\sigma$ respectively, the map $\pi = \frac12(I+\sigma)$ is the projection onto the fixed space, and the ring $S_P$ is the Ehrhart ring.

\subsection{Preliminaries on toric varieties}\label{sec:toric-prelim}

Let us fix the notation for this section. Let $X = \Proj S_P$ be the projective toric variety of the $n$-dimensional polytope $P$ and for a subset $T \subseteq (S_P)_1$ write
\[
V(T) := \{\xi \in X \colon s(\xi) = 0 \text{ for all } s \in T\}
\]
for its vanishing locus. We write $V(s_1,\dots,s_r)$ if $T$ is the $\CC$-linear span of $s_1, \dots, s_r$. When $T$ is a linear subspace we call $V(T)$ the \emph{base locus} of $T$. For a nonempty face $F$ of $P$, let $C_F = \RR_{\ge0}F$ be the cone over $F$ and
$
L_F = \Span_\RR(F)\cap\ZZ^{n+1}
$ be the lattice contained in the linear span of $F$.

The variety $X$ comes with an action of the torus $\TT = \Hom(\ZZ^{n+1}\cap\{\hgt = 0\},\CC^*)$ with one orbit for each nonempty face $F$ of $P$. We write $\orb{F}$ for the orbit associated to $F$. Recall that $\dim \orb{F} = \dim F$ and $X$ is the disjoint union $\bigsqcup_F \orb{F}$ over the nonempty faces of $P$. That decomposition is the well-known orbit-face correspondence \cite[Theorem~3.2.6]{CLS}, and Lemma~\ref{lem:toric}(a) below is a specific form that we will use. We also use Krull's principal ideal theorem and Krull's height theorem in the graded setting, for which see \cite[Section~1.2 and Appendix~A]{BH}.

We record two facts about the lattice $L_F$ and the group $M_F$, used in the proof of Lemma~\ref{lem:toric}(a) and in the proof of Lemma~\ref{lem:Z}(b). Firstly, the lattice $L_F$ is generated by the lattice points of the cone $C_F$. To see this, take any lattice point $v \in \Span_\RR(F)$ and a lattice point $a$ in the relative interior of $C_F$. Then $v + ka$ lies in $C_F$ for all $k$ large enough, so $(v+ka) - ka$ lies in the group generated by the lattice points of $C_F$. Secondly, the group $M_F$, which is the group generated by the lattice points of $F$ (Definition~\ref{def:odd}), has finite index in $L_F$, because the vertices of $F$ span $\Span_\RR F$. It is helpful to think of $L_F$ as the \emph{saturation} of $M_F$: the lattice $L_F$ consists of every lattice point of $\Span_\RR F$, so passing from $M_F$ to $L_F$ fills in the lattice points of that span which $M_F$ misses.

\begin{lemma}[Points and orbits of $X$]\label{lem:toric}
We have the following. 
\begin{enumerate}[label=(\alph*)]
    \item The closed points of $X$ correspond to the pairs $(F,[\chi])$ where $F$ is a nonempty face of $P$ and the character $\chi \in \Hom(L_F,\CC^*)$ is taken modulo the relation $\chi \sim \lambda^{\hgt}\chi$ for $\lambda\in\CC^*$. Here $\lambda^{\hgt}$ denotes the character $u \mapsto \lambda^{\hgt(u)}$ of $L_F$. The pairs with a fixed face $F$ give the $\dim F$-dimensional orbit $\orb{F}$. For a lattice point $u \in P \cap M$ and a point $\xi = (F,[\chi])$, the evaluation of $x^u \in (S_P)_1$ at $\xi$ is $x^u(\xi) = \chi(u)$ if $u$ lies in $F$, and vanishes otherwise. These values are well defined up to a common scalar. In particular $x^v(\xi)$ is nonzero for each vertex $v$ of $F$.
    \item For elements $s_1,\dots,s_r \in (S_P)_1$, the quotient $S_P/(s_1,\dots,s_r)$ is finite-dimensional if and only if $s_1, \dots, s_r$ have no common zero on $X$.
    \item Let $Y \subseteq X$ be irreducible and closed and let $s \in (S_P)_1$. If the element $s$ vanishes on $Y$, then the intersection is $Y \cap V(s) = Y$. Otherwise, every component of $Y \cap V(s)$ has dimension exactly $\dim Y - 1$. So, if the element $s$ does not vanish on $Y$, then the intersection $Y \cap V(s)$ is empty when $\dim Y = 0$, and is nonempty of dimension $\dim Y - 1$ when $\dim Y \ge 1$.
\end{enumerate}
\end{lemma}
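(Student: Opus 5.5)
The lemma collects three standard facts, and I would prove each from the semigroup structure of $S_P$ together with graded commutative algebra.

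For (a), I would use that $X = \Proj S_P$ and that every closed point is cut out by a graded maximal-type ideal, that is, by a graded ring homomorphism $S_P \to \CC[\lambda]$ modulo scaling. For a closed point $\xi$, the set of monomials $x^u$ not vanishing at $\xi$ is multiplicatively closed and saturated. Since $S_P$ is a semigroup ring, this set has the form $C_F \cap \ZZ^{n+1}$ for a unique face $F$: a face of the monoid $C_P \cap \ZZ^{n+1}$ corresponds to a face of $P$. Restricted to $C_F \cap \ZZ^{n+1}$, the point defines a monoid homomorphism to $\CC^*$. By the first recorded fact, the lattice points of $C_F$ generate $L_F$, so this extends uniquely to a character $\chi \in \Hom(L_F,\CC^*)$. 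Rescaling by degree gives the relation $\chi \sim \lambda^{\hgt}\chi$. Conversely, each such pair defines a homomorphism: send $x^u$ to $\chi(u)$ if $u \in C_F$ and to $0$ otherwise. This is a ring map because $u+v \in C_F$ with $u,v\in C_P$ forces $u,v \in C_F$. The quotient $\Hom(L_F,\CC^*)/\CC^*$ is a torus of dimension $\operatorname{rank} L_F - 1 = \dim F$. Since $\TT$ acts transitively on it, it is the orbit $\orb{F}$. The evaluation formula is immediate, and vertices $v$ of $F$ satisfy $v\in C_F$, so $\chi(v)\neq 0$. This is the orbit-face correspondence of \cite[Theorem~3.2.6]{CLS} made explicit.

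For (b), I would apply the graded Nullstellensatz. $S_P/(s_1,\dots,s_r)$ is finite-dimensional if and only if the irrelevant ideal $(S_P)_+$ lies in the radical of $(s_1,\dots,s_r)$. This holds if and only if no homogeneous prime not containing $(S_P)_+$ contains all the $s_i$, that is, if and only if the $s_i$ have no common zero on $\Proj$. Because $S_P$ is generated in degree one up to finite extension, $(S_P)_1$ has no common zero on $X$, and one checks that closed points suffice.

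For (c), I would work with the homogeneous prime $\mathfrak p$ of $Y$, so that $\dim Y = \dim S_P/\mathfrak p - 1$. If $s \notin \mathfrak p$, then by Krull's principal ideal theorem every minimal prime over $\mathfrak p + (s)$ has height one in the domain $S_P/\mathfrak p$. Since $S_P/\mathfrak p$ is an affine graded domain, the minimal primes have dimension exactly $\dim S_P/\mathfrak p - 1$, which gives components of dimension $\dim Y - 1$. For nonemptiness when $\dim Y \ge 1$, the minimal primes over $\mathfrak p+(s)$ are homogeneous of dimension at least $1$, so they do not contain $(S_P)_+$. This would fail only if $\mathfrak p+(s)$ were $(S_P)_+$-primary, which would force $\dim S_P/\mathfrak p \le 1$. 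When $\dim Y = 0$, the dimension count gives $-1$, so the intersection is empty.

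The main obstacle is bookkeeping in (a): identifying the nonvanishing monomials with a face, and using the saturation fact to pass from $M_F$ or $C_F \cap \ZZ^{n+1}$ to $L_F$. The rest is routine.
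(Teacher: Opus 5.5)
Your proposal is correct and takes essentially the same route as the paper. In (a) you identify the nonvanishing set of a point with a face $C_F\cap\ZZ^{n+1}$ of the saturated monoid, extend to a character of $L_F$ via the generation fact, and quotient by the grading $\CC^*$; (b) is the graded Nullstellensatz; and (c) is Krull's principal ideal theorem together with the dimension formula for the affine domain $S_P/\mathfrak p$. The differences are only cosmetic: you describe points as graded maps $S_P\to\CC[\lambda]$ rather than as $\CC^*$-orbits in $\operatorname{Spec}S_P$, you treat $\dim Y=0$ in (c) by the dimension count rather than directly, and your aside in (b) that $(S_P)_1$ has no common zero on $X$ is true but not needed.
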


\begin{proof}
(a) We describe the closed points of the affine cone over $X$ and then pass to the quotient by the grading action. The closed points of the affine cone $\operatorname{Spec}S_P$ correspond to semigroup homomorphisms $\psi\colon C_P\cap\ZZ^{n+1}\to(\CC,\cdot)$ with $\psi(0) = 1$. Explicitly, the closed point is the maximal ideal obtained from the kernel of the ring map $\tilde \psi : S_P \rightarrow \CC$ where $x^u \mapsto \psi(u)$. The locus where $\psi$ is nonzero is a \emph{face} of the semigroup, that
is, a subsemigroup containing both summands of each of its elements. Equivalently, the set of lattice points on which $\psi$ vanishes is a prime ideal of the semigroup. Since the semigroup is saturated, the face is of the form $C_F\cap\ZZ^{n+1}$ for a face $F$ of $P$. On $F$, the homomorphism extends uniquely to a character of $L_F$. 
Points of $X$ correspond to such $\psi$ with nonempty face $F$ modulo the action of $\CC^*$ induced by the grading, namely $\psi \mapsto \lambda^{\hgt}\psi$,
which is the relation in the statement. See \cite[Chapters~1--3]{CLS}.
The pairs whose first entry is the face $F$ are exactly the points of the orbit $\orb F$, of dimension $\dim F$, by the orbit-face correspondence \cite[Theorem~3.2.6]{CLS}.
Under that correspondence the map $\tilde\psi$ sends $x^u$ to $\chi(u)$ for a lattice point $u$ of $F$, and sends $x^u$ to zero for a lattice point $u$ of $P$ outside $F$, which is the stated evaluation of $x^u$ at $\xi$. Each vertex $v$ of $F$ is a lattice point of $F$ and the character $\chi$ takes values in $\CC^*$, so the value $x^v(\xi) = \chi(v)$ is nonzero.

(b) This follows directly from the graded Nullstellensatz. See \cite[Theorem~1.5.5]{BH}. 

(c) If $\dim Y = 0$, then $Y$ is a single closed point of $X$ and the intersection $Y \cap V(s)$ is $Y$ when $s$ vanishes at that point and is empty otherwise, which is the claim. So assume $\dim Y \ge 1$. The set $Y$ corresponds to a homogeneous prime $\mathfrak p$ with $\dim S_P/\mathfrak p = \dim Y + 1 \ge 2$. By Krull's principal ideal theorem, every minimal prime $\mathfrak q$ over the ideal $\mathfrak p + (s)$ satisfies $\dim S_P/\mathfrak q \ge \dim Y$, with equality when the element $s$ lies outside $\mathfrak p$, because the quotient $S_P/\mathfrak p$ is an affine domain. Such prime ideals are homogeneous and do not contain the irrelevant ideal, so they correspond to nonempty components of the intersection $Y\cap V(s)$. If the element $s$ lies outside $\mathfrak p$, then that intersection is a proper closed subset of $Y$. Hence, if the element $s$ does not vanish on $Y$, then every component of the intersection $Y\cap V(s)$ has dimension exactly $\dim Y - 1$, and the intersection is nonempty because $\dim Y \ge 1$.
\end{proof}

\begin{proposition}[{\cite[Lemma~4.8]{Sta23}}, {\cite[Theorem~1.4]{Ste94}}]\label{prop:lsop-eff}
Let $G$ be a finite group and let $V \subseteq (S_P)_1$ be a $G$-stable l.s.o.p.\ of class $[V] = [\rho\otimes\CC]$. Then the equivariant $H^*$-series is $\varphi[t] = \sum_k[(S_P/(V))_k]t^k$, in particular $\varphi[t]$ is effective.
\end{proposition}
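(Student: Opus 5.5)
The plan is to compute the graded character of $S_P/(V)$ with an equivariant Koszul resolution, and then compare it with the formula $\EES_P^G(t)=\varphi[t]/\det(I-t\rho)$ from Section~\ref{setup}.

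\textbf{Step 1: a regular sequence.} Since $V$ is an l.s.o.p., the quotient $S_P/(V)$ is finite-dimensional. So a basis $s_1,\dots,s_{n+1}$ of $V$ is a homogeneous system of parameters of the $(n+1)$-dimensional graded ring $S_P$. By Hochster's theorem, $S_P$ is Cohen--Macaulay. By \cite[Theorem~2.1.2]{BH}, $s_1,\dots,s_{n+1}$ is therefore a regular sequence on $S_P$.

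\textbf{Step 2: the Koszul complex is exact and equivariant.} Consider the Koszul complex
\[
0\to S_P\otimes\textstyle\bigwedge^{n+1}V \to\cdots\to S_P\otimes V\to S_P\to S_P/(V)\to 0 .
\]
Here $\bigwedge^iV$ sits in degree $i$, and the differential is
\[
s\otimes v_1\wedge\dots\wedge v_i\ \mapsto\ \sum_j(-1)^{j+1}\,s\,v_j\otimes v_1\wedge\cdots\widehat{v_j}\cdots\wedge v_i .
\]
This formula does not depend on a choice of basis of $V$. Since $V$ is $G$-stable and $G$ acts on $S_P$ by graded ring automorphisms, $G$ acts diagonally on each term and every differential commutes with this action. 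Because the $s_j$ form a regular sequence, the complex is exact, by \cite[Corollary~1.6.14]{BH}.

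\textbf{Step 3: take characters degree by degree.} Restrict the complex to a fixed degree $k$. This gives a finite exact sequence of finite-dimensional $G$-representations. The alternating sum of their classes in $\MR(G)$ vanishes, so
\[
[(S_P/(V))_k]=\sum_{i\ge0}(-1)^i\,[\textstyle\bigwedge^iV]\cdot[(S_P)_{k-i}].
\]
Summing over $k$ yields
\[
\sum_k[(S_P/(V))_k]\,t^k=\Big(\sum_i(-1)^i[\textstyle\bigwedge^iV]\,t^i\Big)\,\EES_P^G(t).
\]
Since $[V]=[\rho\otimes\CC]$ in $\MR(G)$, we also have $[\bigwedge^iV]=[\bigwedge^i\rho]$ for every $i$. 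Hence the prefactor equals $\det(I-t\rho)$, and the right-hand side is $\varphi[t]$.

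\textbf{Step 4: conclude.} Each coefficient $[(S_P/(V))_k]$ is the class of a genuine representation, so $\varphi[t]$ is effective. The quotient is finite-dimensional, so $\varphi[t]$ is in fact a polynomial.

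The only point needing care is Step 2: the differential must be genuinely $G$-equivariant, which is why we use the basis-free description. In addition, $\bigwedge^iV$ depends only on the class $[V]$, and this holds because representations of a finite group over $\CC$ are determined by their characters.
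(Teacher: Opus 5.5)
Your proof is correct and is essentially the paper's argument. The paper checks the same two hypotheses: that $S_P$ is Cohen--Macaulay, and that a basis of $V$ is a degree-one homogeneous system of parameters. It then invokes \cite[Lemma~4.8]{Sta23} (going back to \cite[Theorem~1.4]{Ste94}), which is the equivariant Koszul-complex identity you derive in Steps~2--3, including the use of $V\cong\rho\otimes\CC$ to replace $[\bigwedge^i V]$ by $[\bigwedge^i\rho]$; you simply prove that cited lemma instead of quoting it.
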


\begin{proof}
The cited lemma applies to a Cohen--Macaulay ring, and the ring $S_P$ is Cohen--Macaulay \cite{Hoc72}. A basis of the l.s.o.p.\ $V$ is a homogeneous system of parameters of degree one, so \cite[Lemma~4.8]{Sta23} applied to that basis gives the result.
\end{proof}

\subsection{Main proof}

Let $W_+$ and $W_-$ be the $(\pm1)$-eigenspaces of $\sigma$ on the degree-one part $(S_P)_1$, so that $W_+$ is spanned by the elements $x^u + x^{\sigma u}$ and $W_-$ by the elements $x^u - x^{\sigma u}$, for $u$ a lattice point of $P$. Let
\[
Z_+ := V(W_-) \subseteq X, \qquad Z_- := V(W_+) \subseteq X,
\]
so $Z_+$ is the base locus of $W_-$ and $Z_-$ the base locus of the invariant system $W_+$. We use this notation so that $Z_+$ are the points where the forms $f$ that are \emph{anti-invariant}, i.e., $\sigma$ maps $f$ to $-f$, vanish. Each locus is $\sigma$-invariant and closed in $X$.

\begin{lemma}[Base loci]\label{lem:Z}
We have the following.
\begin{enumerate}[label=(\alph*)]
    \item The two loci are disjoint, $Z_+\cap Z_- = \emptyset$.
    
    \item The intersection $Z_+\cap \orb{F}$ is nonempty if and only if $F$ satisfies $\sigma(F) = F$, and in that case, the dimension of $Z_+\cap \orb{F}$ is $\dim F^\sigma$. Hence $Z_+$ has dimension $p-1$.
    
    \item The intersection $Z_-\cap \orb{F}$ is nonempty if and only if $F$ is $\sigma$-odd, and in that case the dimension of $Z_-\cap \orb{F}$ is $\dim F^\sigma$. Hence the dimension of $Z_-$ is the maximum of $\dim F^\sigma$ over the $\sigma$-odd faces $F$, and is $-\infty$ if there are none.
\end{enumerate}
\end{lemma}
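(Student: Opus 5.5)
We work orbit by orbit, using the description of closed points in Lemma~\ref{lem:toric}(a). A point $\xi=(F,[\chi])$ with $\chi\in\Hom(L_F,\CC^*)$ satisfies $x^u(\xi)=\chi(u)$ for $u\in F$ and $0$ otherwise. The involution acts on $X$ by sending $(F,[\chi])$ to $(\sigma F,[\chi\circ\sigma])$. The forms $x^u\pm x^{\sigma u}$ evaluate at $\xi$ to $\chi(u)\mathbf 1_{u\in F}\pm\chi(\sigma u)\mathbf 1_{\sigma u\in F}$.

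For (a), suppose $\xi\in Z_+\cap Z_-$. Then every $x^u=\tfrac12\big((x^u+x^{\sigma u})+(x^u-x^{\sigma u})\big)$ vanishes at $\xi$. This contradicts $x^v(\xi)\ne0$ for a vertex $v$ of $F$.

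For (b) and (c), fix a face $F$. If $\sigma F\neq F$, pick a vertex $v$ of $F$ with $\sigma v\notin F$. Then $x^v\pm x^{\sigma v}$ takes the value $\chi(v)\ne0$, so $\orb F$ meets neither locus. If $\sigma F=F$, the conditions become: $\xi\in Z_+$ iff $\chi(\sigma u)=\chi(u)$ for all lattice points $u\in F$, and $\xi\in Z_-$ iff $\chi(\sigma u)=-\chi(u)$ for all lattice points $u\in F$. Both conditions are taken up to the scalar $\lambda^{\hgt}$, which is harmless since $\hgt$ is $\sigma$-invariant. Equivalently, define the character $\psi=(\chi\circ\sigma)/\chi$ on $M_F$. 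Then $Z_+$ requires $\psi\equiv1$ on $M_F$, and $Z_-$ requires $\psi(u)=(-1)^{\hgt u}$ on $M_F$, since $\psi$ is a homomorphism and $M_F$ is generated by height-one points.

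To analyse these conditions, note that the map $\chi\mapsto\psi$ is $\chi\mapsto\chi^{\sigma-1}$. Its image consists of characters of $M_F$ trivial on $M_F\cap\Fix(\sigma)$, since $\psi(u)=\chi(\sigma u-u)$ and $\sigma u-u=0$ for fixed $u$. Conversely, we should check that every such character restricted from $L_F$ arises, at least up to finitely many components.

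The main obstacle is solvability in (c). The target $(-1)^{\hgt}$ must be trivial on $M_F\cap\Fix(\sigma)$, and this is exactly the $\sigma$-odd condition. I would then show sufficiency: $(-1)^{\hgt}$ is trivial on the fixed part. It also equals $1$ on $(1-\sigma)$-images trivially, in the sense that $\hgt(u-\sigma u)=0$. One then needs it to extend to $\chi^{\sigma-1}$ for some $\chi\in\Hom(L_F,\CC^*)$. I would argue that the image of $\chi\mapsto\chi^{\sigma-1}$ on the torus $\Hom(L_F,\CC^*)$, restricted to $M_F$, is the annihilator of $(L_F)^\sigma\cap M_F$-type subgroup, using divisibility of $\CC^*$. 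Concretely, the characters of $M_F$ killing $\ker(1-\sigma)\cap M_F$ factor through $(1-\sigma)M_F$. Because $\CC^*$ is divisible, such characters extend to $(1-\sigma)L_F$ and then to $L_F$, and any character on $(1-\sigma)L_F$ pulls back to some $\chi$.

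For dimensions, the fibres of $Z_\pm\cap\orb F$ are cosets of the subtorus $\{\chi:\chi\circ\sigma=\chi\}/\CC^*$. This has dimension $\operatorname{rank}(L_F)^\sigma-1=\dim(\Span_\RR F)^\sigma-1=\dim F^\sigma$ by Lemma~\ref{lem:fixed}(b). The coset is nonempty in (b) via $\chi=1$. Finally, the global dimensions follow by taking the maximum over faces, with $F=P$ giving $p-1$ in (b) by Lemma~\ref{lem:fixed}(c).
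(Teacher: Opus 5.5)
Your proposal is correct and follows essentially the same route as the paper: an orbit-by-orbit character analysis, the map $\chi\mapsto\chi\circ(\sigma-1)|_{M_F}$ with image identified by divisibility of $\CC^*$, and solvability of the $Z_-$ equation $\psi=(-1)^{\hgt}$ being exactly the $\sigma$-odd condition. One small precision. The solution sets are cosets of the kernel $K$ of characters trivial on $(\sigma-1)M_F$, which contains your group $\{\chi:\chi\circ\sigma=\chi\}$ only with finite index, so they are finite unions of cosets of your group; the dimension count $\dim F^\sigma$ is unaffected, given that the $\lambda^{\hgt}$-scaling is free.
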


\begin{proof}
Let $\xi = (F,[\chi])$ be a point of the orbit $\orb{F}$. Throughout the proof we use Lemma~\ref{lem:toric}(a).

(a) Assume that $\xi$ lies in both loci. Then $x^u(\xi) = x^{\sigma u}(\xi) = -x^{\sigma u}(\xi)$, so the value $x^u(\xi)$ vanishes for each lattice point $u$ of $P$. But $x^v(\xi)$ is nonzero for each vertex $v$ of $F$, a contradiction.

(b) The point $\xi$ lies in $Z_+$ if and only if $x^u(\xi) = x^{\sigma u}(\xi)$ for all lattice points $u$ of $P$. If $F$ is not invariant, then there exists a vertex $u$ of $F$ such that $\sigma u$ lies outside $F$. So we have $x^u(\xi)\ne0 = x^{\sigma u}(\xi)$, hence the intersection $Z_+ \cap \orb{F}$ is empty. 
On the other hand, if $\sigma(F) = F$, then for any lattice point $u$ of $P$ outside of $F$, its image $\sigma u$ is also outside $F$, so $x^u(\xi) = 0 = x^{\sigma u}(\xi)$. Hence the condition defining $Z_+$ reduces to $\chi(\sigma u) = \chi(u)$ for the lattice points $u$ of $F$, that is, the character $\chi$ is trivial on $(\sigma-1)M_F = \{\sigma u - u : u \in M_F\} \subseteq L_F$. The characters satisfying this are given by 
\[
K := \Hom\big(L_F/(\sigma-1)M_F,\ \CC^*\big),
\]
of dimension $\rank L_F - \rank(\sigma-1)M_F$. Since $M_F$ has finite index in $L_F$, the rank of $(\sigma-1)M_F$ is the dimension of the $(-1)$-eigenspace of $\sigma$ acting on $\Span_\RR F$. So $K$ has dimension $\dim(\Span_\RR F)^\sigma = \dim F^\sigma + 1$ by Lemma~\ref{lem:fixed}(b). 

The group $K$ is not yet the intersection $Z_+\cap\orb{F}$, because characters are taken modulo the scaling action. We now check that the scaling action preserves $K$ and is free. Since height is $\sigma$-invariant, each element $\sigma u - u$ of
$(\sigma-1)M_F$ has height zero. Hence the character $\lambda^{\hgt}$ is trivial on $(\sigma-1)M_F$, and the scaling action $\chi\mapsto\lambda^{\hgt}\chi$ maps $K$ to
itself. The action is also free: if $\lambda^{\hgt}\chi = \chi$, then $\lambda^{\hgt(u)} = 1$ for every element $u$ of $L_F$, and the height map is surjective onto
$\ZZ$ because the vertices of $F$ have height one, thus $\lambda = 1$. The intersection
$Z_+\cap\orb{F}$ is therefore the quotient of $K$ by the scaling action. The group $K$ contains the trivial character, so the intersection $Z_+\cap\orb{F}$ is nonempty, and the quotient has dimension
$\dim K - 1 = \dim F^\sigma$.

For the last claim, $X$ is the union of its finitely many orbits, so the
dimension of $Z_+$ is the maximum of $\dim F^\sigma$ over the
$\sigma$-invariant faces $F$. By Lemma~\ref{lem:fixed}(c), the whole polytope has $\dim P^\sigma = p-1$ and every
proper invariant face $F$ has $\dim F^\sigma \le p-2$. Hence $\dim Z_+ = p-1$.

(c) The point $\xi$ lies in $Z_-$ if and only if $x^u(\xi) = -x^{\sigma u}(\xi)$ for all lattice points $u$ of the polytope, which, by the same argument as part (b), forces the face to be invariant. 
For a lattice point $u$ of $F$, this means $\chi(u) = -\chi(\sigma u)$, that
is,
$
\chi(\sigma u - u) = -1
$.
We record the condition $\chi(\sigma u - u) = -1$ as a single equation between two characters of the group $M_F$. Let
\[
\Psi\colon\Hom(L_F,\CC^*)\to\Hom(M_F,\CC^*), \qquad \text{defined by }
\Psi(\chi) = \chi\circ(\sigma-1)|_{M_F} ,
\]
and let $\varepsilon \in \Hom(M_F,\CC^*)$ be the character $\varepsilon(m) = (-1)^{\hgt(m)}$.

The lattice points of $F$ have height one, so
$\varepsilon$ takes the value $-1$ on each of them. The lattice points of $F$ generate $M_F$, and both $\Psi(\chi)$ and $\varepsilon$ are homomorphisms, so $\chi(u) = -\chi(\sigma u)$
holds for every lattice point $u$ of $F$ exactly when $\Psi(\chi) = \varepsilon$. So the characters $\chi$ for which the point $(F,[\chi])$ lies in $Z_-$ are exactly the solutions of $\Psi(\chi) = \varepsilon$. Next we determine when a solution exists, and then compute the dimension of the set of solutions.

A solution exists if and only if the character $\varepsilon$ lies in the image of $\Psi$, so we show that the image of $\Psi$ is the set of characters of $M_F$ acting trivially on $M_F\cap\Fix(\sigma)$.
Firstly, every character in the image of $\Psi$ is of this kind, because each element $m$ of $M_F\cap\Fix(\sigma)$ satisfies $(\sigma-1)m = 0$, so $\Psi(\chi)(m) = \chi(0) = 1$ for every $\chi$. Conversely, the subgroup $M_F\cap\Fix(\sigma)$ is the kernel of $(\sigma-1)|_{M_F}$, so a character $\psi$ of $M_F$ that is trivial on that kernel factors as $\psi = \psi'\circ(\sigma-1)$ for a character $\psi'$ of the subgroup
$(\sigma-1)M_F$. The character $\psi'$ extends to the lattice $L_F$ because the group $\CC^*$ is \emph{divisible}, i.e., every element of $\CC^*$ has a $k$-th root for every positive integer $k$, and any such extension is a preimage of $\psi$.
The character $\varepsilon$ is trivial on the subgroup $M_F\cap\Fix(\sigma)$ exactly when every $\sigma$-fixed element of $M_F$ has even height, which is the definition of a $\sigma$-odd face. So a solution exists if and only if the face $F$ is $\sigma$-odd, which is the first claim of part (c).

Suppose now that the face $F$ is $\sigma$-odd, so that a solution exists, and let us compute the dimension of the set of solutions. The kernel of $\Psi$ coincides with the group $K$ of part
(b), so the set of solutions of $\Psi(\chi) = \varepsilon$ is a
coset of $K$, of dimension $\dim K = \dim F^\sigma + 1$. The character $\lambda^{\hgt}$ lies in $K$, because every element $\sigma u - u$ of $(\sigma-1)M_F$ has height zero, and multiplication by an element of $K$ sends each coset of $K$ to itself. So the scaling action $\chi\mapsto\lambda^{\hgt}\chi$ preserves the coset of solutions, and that action is free by the argument in part (b). Hence the
intersection $Z_-\cap\orb{F}$ has dimension $\dim K - 1 = \dim F^\sigma$.

For the last claim, $X$ is the union of its finitely many orbits, so the dimension of $Z_-$ is the maximum of $\dim F^\sigma$ over the $\sigma$-odd faces $F$. If no face of $P$ is $\sigma$-odd, then every intersection $Z_-\cap\orb{F}$ is empty, so the locus $Z_-$ is empty and its dimension is $-\infty$.
\end{proof}

We will now give a proof of our main result.

\begin{proof}[Proof of Theorem~\ref{thm:lsop}]
We prove both directions by studying the two base loci as in Lemma~\ref{lem:Z}. For the forward direction we build the subspace $V$ in two parts. In the first part we choose $q$ anti-invariant elements $\theta_1,\dots,\theta_q$ of $W_-$, cutting $X$ down to a locus $Z$ of dimension at most $p-1$ that is disjoint from $Z_-$. In the second part we choose $p$ invariant elements $\phi_1,\dots,\phi_p$ of $W_+$, cutting $Z$ down to the empty set. We use the assumption that $P$ has no obstructing face in the first part for the bound $\dim Z_- \le q-1$. For the converse direction we take an obstructing face and produce a common zero of every candidate subspace.

\medskip
\noindent $(\Rightarrow)$
Any $\sigma$-stable subspace $V \subseteq (S_P)_1$ splits as $V = V_+\oplus V_-$ with $V_+ \subseteq W_+$ and $V_- \subseteq W_-$ and its class in the representation ring is $[V] = (\dim V_+)\chi_0 + (\dim V_-)\chi_1$. Recall from Definition~\ref{def:odd} that a $\sigma$-odd face $F$ is obstructing if its fixed part satisfies $\dim F^\sigma \ge q$. So, if $P$ has no obstructing faces, then by Lemma~\ref{lem:Z}(c), $Z_-$ has dimension at most $q-1$.

Assume that $P$ has no obstructing face. We choose elements $\theta_1,\dots,\theta_q\in W_-$ inductively, writing $Z^{(j)} = X \cap V(\theta_1,\dots,\theta_j)$ for each $j$, satisfying:
\begin{itemize}[nosep]
    \item every component of $Z^{(j)}$ not contained in $Z_+$ has dimension at most $n-j$, and
    \item every component of the intersection $Z_-\cap Z^{(j)}$ has dimension at most $\dim Z_- - j$.
\end{itemize}
The role of the first condition is to bound the dimension of the locus from the first part. The second condition forces the intersection $Z_-\cap Z^{(j)}$ to be empty for $j=q$, so that we can move on to the second part.

For $j = 0$, we have $Z^{(0)} = X$ is irreducible of dimension $n$ and both conditions hold.
Suppose the conditions hold for $j-1$, and let $Y_1,\dots,Y_r$ be the components of $Z^{(j-1)}$ not contained in $Z_+$, together with the components of the intersection $Z_-\cap Z^{(j-1)}$. Before choosing $\theta_j$, we record what we need about those components.
Firstly, no component of $Z_-\cap Z^{(j-1)}$ is contained in $Z_+$, by Lemma~\ref{lem:Z}(a).
Secondly, no component of $Z^{(j-1)}$ is contained in $Z_+$ either. To see this, observe that the locus $Z^{(j-1)}$ is cut out by $j-1$ elements of degree one, so by Krull's height theorem
$\dim Y \ge n-(j-1) = n-j+1$ for every component $Y$ of $Z^{(j-1)}$. Note that $j \le q$, hence $n-j+1 > n-q = p-1$, while $\dim Z_+ = p-1$ by Lemma~\ref{lem:Z}(b) and a closed subset of $Z_+$ has dimension at most $p-1$.
Hence every component of $Z^{(j-1)}$ appears among $Y_1,\dots,Y_r$.
Thirdly, the ring $S_P$ is a finitely generated $\CC$-algebra, so $X$ is Noetherian and the list $Y_1,\dots,Y_r$ is finite.

For each $i$, the component $Y_i$ does not lie in $Z_+$. Since $Z_+$ is the base locus of $W_-$, there is an element of $W_-$ that does not vanish identically on $Y_i$. Therefore, the elements of $W_-$
that do vanish on $Y_i$ form a proper subspace. There are finitely many components and the field $\CC$ is infinite, so we may choose $\theta_j$ outside the
union of these subspaces.
By Lemma~\ref{lem:toric}(c), each component of each intersection $Y_i\cap V(\theta_j)$ has dimension one less than that of $Y_i$. We check the two conditions for $j$ in turn. Every component of $Z^{(j)}$ lies in some component $Y$ of $Z^{(j-1)}$, and is then a component of the intersection $Y\cap V(\theta_j)$, so has dimension $\dim Y - 1 \le n-j$ by the first condition for $j-1$. Likewise, every component of $Z_-\cap Z^{(j)}$ lies in some component $Y'$ of $Z_-\cap Z^{(j-1)}$, and is then a component of the intersection $Y'\cap V(\theta_j)$, so has dimension $\dim Y' - 1 \le \dim Z_- - j$ by the second condition for $j-1$. So both conditions hold for $j$, which completes the induction.

Let us now consolidate the first part. Write $Z := Z^{(q)}$ for the locus after the last step. The components of the intersection $Z_-\cap Z$ have dimension at most $\dim Z_- - q < 0$, so that intersection is empty and the locus $Z$ avoids $Z_-$. Take a component $Y$ of the locus $Z$. If $Y$ lies in $Z_+$, then $\dim Y \le \dim Z_+ = p-1$ by Lemma~\ref{lem:Z}(b). Otherwise the first condition gives $\dim Y \le n-q = p-1$. So every component of $Z$ has dimension at most $p-1$. Since $Z$ avoids the base locus $Z_-$ of $W_+$, no component of $Z$ is contained in $Z_-$. So, for each component of $Z$, the elements of $W_+$ vanishing identically on that component form a proper subspace of $W_+$.

Next, we choose the elements $\phi_1,\dots,\phi_p\in W_+$ one at a time. Write $Z_k := Z\cap V(\phi_1,\dots,\phi_k)$ for the locus after $k$ steps, with $Z_0 = Z$.
Suppose that $\phi_1,\dots,\phi_k$ have been chosen for some $k \ge 0$. The locus $Z_k$ lies in $Z$ and avoids $Z_-$, hence no component of it is contained in $Z_-$, and the elements of $W_+$ vanishing identically on a given component form a proper subspace of $W_+$.
As before, there are finitely many components, so we may choose $\phi_{k+1}$ outside the union of
those subspaces. By Lemma~\ref{lem:toric}(c) every component of $Z_{k+1}$ then has dimension one less than that of a component of $Z_k$.

Every component of $Z_0$ has dimension at most $p-1$, so every component of $Z_k$ has
dimension at most $p-1-k$. Hence after $p-1$ steps every component of $Z_{p-1}$ is a single point, and no such point lies in $Z_-$, so the element $\phi_p$ chosen as above does not vanish there. By Lemma~\ref{lem:toric}(c) applied with $\dim Y = 0$, the intersection $Z_{p-1}\cap V(\phi_p)$ is empty. Hence the locus $Z_p$ is empty.

Let $V$ be the span of $\theta_1, \dots, \theta_q, \phi_1, \dots, \phi_p \in (S_P)_1$. The locus $Z_p$ is the common vanishing locus on $X$ of those $p+q$ elements, and $Z_p$ is empty, so by Lemma~\ref{lem:toric}(b) the quotient $S_P/(V)$ is finite-dimensional. By Krull's height theorem, the ideal generated by $V$ cannot be generated by fewer than $p+q = n+1$ elements, hence the subspace has dimension $\dim V = n+1$. By construction $V_- = \Span_{\CC}(\theta_1, \dots, \theta_q) \subseteq W_-$ and $V_+ = \Span_{\CC}(\phi_1, \dots, \phi_p) \subseteq W_+$, so the two summands satisfy $\dim V_- \le q$ and $\dim V_+ \le p$. Their sum is $\dim V = p+q$, hence both bounds are equalities. So the class in the representation ring is $[V] = p\chi_0 + q\chi_1 = [\rho\otimes\CC]$, and we have shown that if $P$ has no obstructing face then $S_P$ has a $\sigma$-stable l.s.o.p.\ $V$ whose class in $\MR(G)$ is $[V] = [\rho \otimes \CC]$.

\medskip
\noindent $(\Leftarrow)$
Conversely, assume that $P$ has an obstructing face $F$. Suppose for contradiction that $V$ is a $\sigma$-stable l.s.o.p.\ of class $[V] = [\rho\otimes\CC]$, so that the anti-invariant summand has dimension $\dim V_- = q$ by the splitting above. The face $F$ is $\sigma$-odd and satisfies $\dim F^\sigma \ge q$, so by Lemma~\ref{lem:Z}(c) the locus $Z_-$ has dimension at least $q$. Let $Y$ be a component of $Z_-$ with dimension at least $q$, and fix a basis $\theta_1,\dots,\theta_q$ of $V_-$. 
By applying Lemma~\ref{lem:toric}(c) repeatedly, we show that $Y \cap V(\theta_1, \dots, \theta_q)$ is nonempty. Explicitly, let $Y_0 := Y$ and, for each $1 \le k \le q$, let $Y_k$ be a component of
$Y_{k-1}\cap V(\theta_k)$. At each step $\dim Y_{k-1} \ge q-k+1 \ge 1$, so by
Lemma~\ref{lem:toric}(c), the intersection is nonempty and we have
$\dim Y_k \ge \dim Y_{k-1} - 1$. Hence $\dim Y_k \ge q-k$, so the set $Y_q$ is
nonempty and is contained in $Y\cap V(\theta_1,\dots,\theta_q)$. So take any $\xi \in Y \cap V(\theta_1, \dots, \theta_q)$. Since $\theta_1, \dots, \theta_q$ is a basis for $V_-$, we have that every element of $V_-$ vanishes at $\xi$. But $Y \subseteq Z_- = V(W_+)$ and so every element of $W_+$ vanishes at $\xi$. In particular $V_+ \subseteq W_+$, so every element of $V = V_+ \oplus V_-$ vanishes at $\xi$. Since $\xi$ is a common zero of a basis of $V$, by Lemma~\ref{lem:toric}(b), the
quotient $S_P/(V)$ is not finite-dimensional. This contradicts the hypothesis that
$V$ is an l.s.o.p. Hence, when $P$ has an obstructing face, the ring $S_P$ has no $\sigma$-stable l.s.o.p.\ whose class in $\MR(G)$ is $[\rho \otimes \CC]$.

\medskip

For the final part, if a $\sigma$-stable l.s.o.p.\ $V$ with $[V]= [\rho\otimes\CC]$ exists, by Proposition~\ref{prop:lsop-eff}, we have that $\varphi[t]$ is the graded character of $S_P/(V)$ and, in particular, that $\varphi[t]$ is effective.
\end{proof}

\begin{example}\label{ex:lsop-in-action}
Consider the diagonally reflected square in Example~\ref{ex:swap-square} and for ease of notation write the four vertex monomials as
\[
a = x^{(0,0)},\quad b = x^{(1,0)},\quad c = x^{(0,1)},\quad d = x^{(1,1)},
\qquad\text{so }
S_P = \CC[a,b,c,d]/(ad-bc),
\]
where $\sigma$ swaps variables $b$ and $c$. Here $p = 2$ and $q = 1$. In this case, $P$ has no $\sigma$-odd faces: the invariant faces are the entire polytope $P$ and the vertices $(0,0)$ and $(1,1)$. Each invariant face contains a fixed lattice point of height one, so $Z_-$ is empty and $P$ has no obstructing face. Following the proof, take one anti-invariant form and then two invariant forms:
\[
\theta = b - c, \qquad \phi_1 = a+d, \qquad \phi_2 = b+c .
\]
Each form cuts the dimension by one, as in the proof: the locus $X \cap V(\theta)$ is a curve, the locus $X\cap V(\theta,\phi_1)$ is a finite set of points, and the locus $X\cap V(\theta,\phi_1,\phi_2)$ is empty. Solving these linear equations gives us $b = c = 0$ and $a = -d$, and the relation $ad = bc$ in $S_P$ becomes $d^2 = 0$. And so the quotient $S_P/(V) \cong \CC[d]/(d^2)$ is finite-dimensional, and the span $V$ is an l.s.o.p.\ of class $[V] = 2\chi_0 + \chi_1$. The involution $\sigma$ fixes $d = x^{(1,1)}$, so the quotient decomposes as $\CC[d]/(d^2) = \CC \cdot 1 \oplus \CC \cdot d$ with $\sigma$ acting trivially on each summand. Hence Proposition~\ref{prop:lsop-eff} gives $\varphi[t] = (1+t)\chi_0$, which agrees with Example~\ref{ex:poly-in-action}.

Next consider the pyramid in Example~\ref{ex:pyramid}, which has five lattice points: the apex at $0$, together with the four points
\[
A = e_x, \qquad B = e_y, \qquad C = e_x+e_z, \qquad D = e_y+e_z
\]
that are the vertices of the $\sigma$-odd facet $F$. So the degree-one part $(S_P)_1$ has basis $x^0, x^A, x^B, x^C, x^D$ and $X$ lies in $\PP^4$. The group element $\sigma$ fixes $0$ and swaps $A$ with $B$ and $C$ with $D$, hence
\[
W_+ = \big\langle x^0,\ x^A+x^B,\ x^C+x^D \big\rangle, \qquad
W_- = \big\langle x^A-x^B,\ x^C-x^D \big\rangle ,
\]
which have dimension three and two respectively. So the class of the degree-one part of $S_P$ in $\MR(G)$ is $[(S_P)_1] = 3\chi_0 + 2\chi_1$. Note that an l.s.o.p.\ $V$ of the correct class has
$[V] = 3\chi_0 + \chi_1$. So any such $V$ decomposes as a sum of $V_+ = W_+$ and $V_- = \langle\theta\rangle$ for a single element $\theta$ of $W_-$.

The vertices of $F$ satisfy $A + D = B + C$, so the corresponding
coordinates satisfy $x^Ax^D = x^Bx^C$ on $X$. Every element of $W_+$ vanishes at the
points
\[
[\,0 : s : -s : u : -u\,] \in \PP^4, \qquad \text{for any } [s:u] \in \PP^1 ,
\]
which lie in the hyperplane $\{x^0 = 0\}$ and satisfy the relation coming from $F$. So these points form a distinguished curve inside $Z_-$. That curve is the component $Y$ of $Z_-$ that Lemma~\ref{lem:Z}(c) supplies for the obstructing facet $F$, of dimension $\dim F^\sigma = 1 \ge q$.
Suppose that $\theta = \lambda(x^A-x^B) + \mu(x^C-x^D)$, then its value at a point on the distinguished curve is
$2(\lambda s + \mu u)$, which vanishes for $[s:u] = [-\mu:\lambda]$. So any candidate for $V$, i.e., a choice of $\theta$, has a common zero on that curve. So by Lemma~\ref{lem:toric}(b), there is no l.s.o.p.\ of class $[\rho\otimes\CC]$. Since $q = 1$, there is only one anti-invariant form available for us to choose, which cannot \emph{cut away} the distinguished curve. 
\end{example}

\section{Refinements and conjectures}\label{sec:conjectures}

Throughout $G = \langle\sigma\rangle$ is a group of order two and we use the notation from Sections~\ref{setup} and \ref{sec:order-two}. 
In this section, we consider how to detect if $\varphi[t]$ is polynomial from the numerator of the Ehrhart series of $P^\sigma$ and give an inequality that always holds for the coefficients of this numerator. We give a small refinement of Theorem~\ref{thm:lsop} in terms of a single $\sigma$-odd face, and we end with two conjectures and a question.

\subsection{Numerator of the fixed polytope}
The \emph{denominator} of a rational polytope $Q$ is the least positive integer $D$ such that the dilate $DQ$ is a lattice polytope. By Lemma~\ref{lem:fixed}(a), the fixed polytope $P^\sigma$ has denominator dividing two, which gives the following.

\begin{lemma}\label{lem:Hform}
The Ehrhart series of $P^\sigma$ is given by $\Ehr_{P^\sigma}(t) = H(t)/(1-t^2)^p$ for some polynomial $H$ with nonnegative integer coefficients. It satisfies
\[
(1+t)^{\,p}\,\varphi[t](\sigma) \ =\ (1+t)^{\,q}\,H(t),
\]
and $\varphi[t]$ is a polynomial if and only if $(1+t)^r$ divides $H$, where $r = \max(p-q, 0)$.
\end{lemma}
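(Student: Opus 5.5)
The plan has three steps: produce the numerator $H$ from the fact that $2P^\sigma$ is a lattice polytope, obtain the identity by substituting into Equation~\eqref{eq:evals}, and then read off polynomiality from the order of vanishing at $t=-1$.

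\textbf{Step 1 (the form of $H$).} By Lemma~\ref{lem:fixed}(a) the dilate $2P^\sigma$ is a lattice polytope of dimension $p-1$ (Lemma~\ref{lem:fixed}(c)). It lies at height two in $\Fix(\sigma)$, and its lattice points are counted in the lattice $\Lambda$. Stanley's theory of Ehrhart series of rational polytopes with denominator dividing two then gives
\[
\Ehr_{P^\sigma}(t)=\frac{H(t)}{(1-t^2)^p}
\]
with $H\in\ZZ_{\ge0}[t]$. The cleanest route to nonnegativity is to work with the cone $C=\RR_{\ge0}P^\sigma\subseteq\Fix(\sigma)$, a $p$-dimensional rational cone whose rays are generated by the height-two lattice points $2\pi(v)$. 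I would take a triangulation of $C$ using only these ray generators, or equivalently a triangulation of $2P^\sigma$ without new vertices. For each simplicial cone I would write the lattice points of $\Lambda$ in it as a disjoint union of translates of the free monoid on the generators, taken over the points of a half-open fundamental parallelepiped. Each generator has height two, so every simplicial piece contributes a series of the form (polynomial with nonnegative coefficients)$/(1-t^2)^p$. I expect nonnegativity to be the only point needing care, because it requires a disjoint half-open decomposition of the triangulated cone. I would cite Stanley's standard argument \cite{Sta80} (equivalently, Hochster's theorem applied to the Cohen--Macaulay ring $\CC[C\cap\Lambda]$ with the l.s.o.p.\ formed by the degree-two vertex monomials), which gives exactly this numerator.

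\textbf{Step 2 (the identity).} This is immediate. By Equation~\eqref{eq:evals},
\[
\varphi[t](\sigma)=(1-t)^p(1+t)^q\,\frac{H(t)}{(1-t)^p(1+t)^p},
\]
and multiplying both sides by $(1+t)^p$ gives $(1+t)^p\varphi[t](\sigma)=(1+t)^qH(t)$.

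\textbf{Step 3 (polynomiality criterion).} As in the proof of Proposition~\ref{prop:poly}, $\varphi[t]$ is polynomial exactly when $\varphi[t](\sigma)=(1+t)^{q-p}H(t)$ is a polynomial. If $p\le q$, then $r=0$, and the expression is visibly a polynomial, matching Corollary~\ref{cor:poly}(i). If $p>q$, then $(1+t)^{q-p}H(t)$ is a rational function whose only possible pole is at $t=-1$. It is a polynomial if and only if the order of vanishing of $H$ at $-1$ is at least $p-q$, that is, if and only if $(1+t)^{p-q}$ divides $H$ in $\QQ[t]$. Since $(1+t)$ is monic, this divisibility is equivalent to divisibility in $\ZZ[t]$. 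Combining the two cases gives the criterion with $r=\max(p-q,0)$.
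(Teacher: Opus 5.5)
Your proposal is correct and follows the paper's proof step for step: the paper also cites \cite[Theorem~2.8]{Sta80} for the form of $H$, substitutes into Equation~\eqref{eq:evals}, and splits into the cases $p\le q$ and $p>q$. One small slip is in your Hochster parenthetical: when $P^\sigma$ has more than $p$ vertices, the degree-two vertex monomials are too many to form a system of parameters, so you would take $p$ generic combinations of them instead, giving a homogeneous rather than a linear s.o.p.; this does not matter, since the citation of Stanley already suffices.
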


\begin{proof}
By Lemma~\ref{lem:fixed}(a) and (c), the dilate $2P^\sigma$ is a lattice polytope of dimension $p-1$ with vertices in the fixed lattice $\Lambda$. So the denominator of $P^\sigma$ divides two, and the Ehrhart series of $P^\sigma$ has the form $H(t) / (1-t^2)^p$ for some polynomial $H$ with nonnegative integer coefficients. See \cite[Theorem~2.8]{Sta80}. By Equation~\eqref{eq:evals}, we have
\[
\varphi[t](\sigma) = (1-t)^p(1+t)^q\,\frac{H(t)}{(1-t)^p(1+t)^p} = \frac{(1+t)^{q}H(t)}{(1+t)^{p}},
\]
which gives us the identity. For the last claim, the evaluation of $\varphi[t]$ at the identity is a polynomial, so $\varphi[t]$ is polynomial if and only if the evaluation $\varphi[t](\sigma)$ is polynomial. If $p-q \le 0$, then by Corollary~\ref{cor:poly}(i) $\varphi[t]$ is polynomial and $(1+t)^r = 1$ trivially divides $H$. On the other hand, if $p-q > 0$, then $\varphi[t](\sigma) = (1+t)^{q-p}H(t)$ is polynomial if and only if $(1+t)^{p-q}$ divides $H$.
\end{proof}

For groups of order two, the Effectiveness Conjecture essentially says that if $H$ is divisible by $(1+t)^r$ then the coefficients of $h^*_P$ dominate those of $(1+t)^{q-p}H$. We have seen in Examples~\ref{ex:counterexample} and \ref{ex:counterexample-c2} that the domination can fail. In \cite{Sta23}, Stapledon defines, for each positive integer $N$, the $h^*_N$-series
\[
h^*_N[t] \ :=\ \det(I-t\rho)\EES_P^G\left(t^{1/N}\right),
\]
which is a power series in $t^{1/N}$ with coefficients in the representation ring $\MR(G)$. For $N = 1$ we recover $\varphi[t]$, and Stapledon shows that $h^*_N$ is effective whenever the order of the group divides $N$, see \cite[Remark~3.9]{Sta23}. For a group of order two we may take $N = 2$, so $h^*_2$ is effective for every $\sigma$-invariant lattice polytope. Effectiveness of $h^*_2$ gives the following inequality of coefficients, whose proof works with the variable $s = t^{1/2}$.

\begin{proposition}\label{prop:known-ineq}
Let $P$ be a $\sigma$-invariant lattice polytope. Then for each $i$, we have
\[
\left[(1+t)^{\,p+q}\,h^*_P(t)\right]_i \ge \left[(1+t^2)^{\,q}\,H(t)\right]_i
\]
where $[f(t)]_i$ denotes the coefficient of $t^i$ in the polynomial $f$.
\end{proposition}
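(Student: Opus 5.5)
The plan is to evaluate Stapledon's series $h^*_2[t]$ at both group elements and then apply the effectiveness criterion of Proposition~\ref{prop:eff} coefficient by coefficient. We work in the variable $s = t^{1/2}$, so that $h^*_2$ becomes an honest power series in $s$:
\[
h^*_2 \;=\; \det(I - s^2\rho)\,\EES_P^G(s).
\]
By \cite[Remark~3.9]{Sta23}, since the group has order two, every coefficient of $h^*_2$ (as a series in $s$) is effective. This is the only external input.

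We compute the two evaluations in turn.

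At the identity, $\det(I - s^2 e) = (1-s^2)^{n+1}$ and $\EES_P^G(s)(e) = \Ehr_P(s) = h^*_P(s)/(1-s)^{n+1}$. This gives
\[
h^*_2(e) = (1+s)^{n+1}h^*_P(s) = (1+s)^{p+q}h^*_P(s),
\]
using $p+q = n+1$.

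At $\sigma$, the eigenvalues of $\sigma$ are $+1$ with multiplicity $p$ and $-1$ with multiplicity $q$. Hence
\[
\det(I - s^2\sigma) = (1-s^2)^p(1+s^2)^q .
\]
Moreover $\EES_P^G(s)(\sigma) = \Ehr_{P^\sigma}(s)$, since the character value counts $\sigma$-fixed lattice points. By Lemma~\ref{lem:Hform}, $\Ehr_{P^\sigma}(s) = H(s)/(1-s^2)^p$. The factors $(1-s^2)^p$ then cancel, giving
\[
h^*_2(\sigma) = (1+s^2)^q H(s).
\]
In particular both evaluations are polynomials in $s$.

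Next we apply the first statement of Proposition~\ref{prop:eff} to the coefficient of $s^i$. Effectiveness says that the absolute value of the $\sigma$-evaluation is at most the $e$-evaluation. Here $H$ has nonnegative integer coefficients by Lemma~\ref{lem:Hform}, and so does $(1+s^2)^q$. Therefore the coefficients of $(1+s^2)^qH(s)$ are nonnegative and the absolute value can be dropped. This yields
\[
\left[(1+s)^{p+q}h^*_P(s)\right]_i \ge \left[(1+s^2)^qH(s)\right]_i .
\]
Renaming the variable $s$ to $t$ gives exactly the claimed inequality.

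The only delicate point is bookkeeping. One must check that Stapledon's definition of $h^*_N$ uses the full homogeneous denominator $\det(I - t\rho)$ with $\rho$ of rank $n+1$, matching the setup of Section~\ref{setup}. One must also check that effectiveness of $h^*_2$ is asserted coefficientwise in the variable $t^{1/2}$. If instead Stapledon's convention uses the affine action with an extra factor of $(1-t)$, the same computation goes through after absorbing that factor into the trivial summand, as discussed at the end of Section~\ref{setup}.
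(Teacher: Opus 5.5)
Your proposal is correct and matches the paper's proof: substitute $t = s^2$, evaluate $h^*_2$ at $e$ and $\sigma$ to get $(1+s)^{n+1}h^*_P(s)$ and $(1+s^2)^q H(s)$ via Lemma~\ref{lem:Hform}, and apply Proposition~\ref{prop:eff} to the effectiveness of $h^*_2$ from \cite[Remark~3.9]{Sta23}. Your extra step of dropping the absolute value using nonnegativity of $H$ is harmless but not needed, since $|c(\sigma)| \le c(e)$ already gives $c(\sigma) \le c(e)$.
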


\begin{proof}
Write $s$ for a formal variable such that $t = s^2$. So we have $h^*_2[s^2] = \det(I-s^2\rho)\,\EES_P^G(s)$, and evaluating it at the identity gives
\[
h^*_2[s^2](e) = (1-s^2)^{n+1}\frac{h^*_P(s)}{(1-s)^{n+1}} = (1+s)^{n+1}h^*_P(s).
\]
By Lemma~\ref{lem:Hform}, evaluating $h^*_2[s^2]$ at $\sigma$ gives us 
$h^*_2[s^2](\sigma) = (1+s^2)^qH(s)$. 
By \cite[Remark~3.9]{Sta23}, the series $h^*_2[s^2]$ is effective, so by Proposition~\ref{prop:eff} the two evaluations satisfy $\left[(1+s)^{n+1}h^*_P(s)\right]_i \ge \left[(1+s^2)^{q}H(s)\right]_i$ for each $i$. Recall that $p+q = n+1$, so renaming the variable $s$ as $t$ gives the inequality of the statement.
\end{proof}

\subsection{Obstructions from faces}
Recall condition (3$'$) of Question~\ref{q:433}: the polytope $P$ has a $\sigma$-invariant lattice polyhedral subdivision $\mathcal S$ whose toric face ring $S_{\mathcal S}$ has a $\sigma$-stable l.s.o.p.\ of class $[\rho\otimes\CC]$. By \cite[Corollary~4.32]{Sta23}, condition (3$'$) sits between condition (3) and condition (4) of Stapledon's hierarchy (Theorem~\ref{thm:stapledon}). In the proposition below, we give a refinement of Theorem~\ref{thm:lsop} that applies to such invariant subdivisions.

For a nonempty face $F$ of $P$, regard $F$ as a $\sigma$-invariant lattice polytope in its own affine span, with ambient lattice $L_F$, and write $S_F$ for its Ehrhart ring and $X_F = \Proj S_F$ for its projective toric variety. The polytope $F$ is full-dimensional in that span and lies at height one, so $F$ satisfies the setup of Section~\ref{sec:complete_setup} with $\ZZ^{n+1}$ replaced by $L_F$, and the results of Section~\ref{sec:proofs} apply to $F$. The numbers $p$ and $q$ are replaced by the dimensions of the $(\pm1)$-eigenspaces of $\sigma$ acting on $\Span_\RR F$, which are in general smaller than $p$ and $q$. Write $Z_-(X_F)$ for the locus in $X_F$ cut out by the invariant elements of degree one.

\begin{proposition}[A local obstruction at a single face]\label{prop:subdiv}
Let $F$ be an obstructing face of $P$.

\begin{enumerate}[label=(\alph*)]
    \item If $V' \subseteq (S_F)_1$ is a $\sigma$-stable subspace whose anti-invariant part has dimension at most $q$, then the quotient $S_F/(V')$ is infinite-dimensional.
    
    \item Assume that $F$ has no proper $\sigma$-invariant lattice subdivision. Then for any $\sigma$-invariant lattice polyhedral subdivision $\mathcal S$ of $P$, the toric face ring $S_{\mathcal S}$ does not admit a $\sigma$-stable l.s.o.p.\ $V \subseteq (S_{\mathcal S})_1$ whose class in $\MR(G)$ is $[V] = [\rho\otimes\CC]$.
\end{enumerate}
\end{proposition}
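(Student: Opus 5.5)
The plan is to transport the converse direction $(\Leftarrow)$ of the proof of Theorem~\ref{thm:lsop} to the face $F$, viewed as a polytope in its own right, and then to pass from a subdivision to $F$ through a surjection of graded rings.

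For part (a), regard $F$ as a full-dimensional $\sigma$-invariant lattice polytope in $\Span_\RR F$ with ambient lattice $L_F$, as set up before the statement.
\begin{enumerate}[label=(\roman*)]
    \item First I check that $F$, as a polytope in its own right, is still $\sigma$-odd. The group generated by its lattice points is the same group $M_F$, and the heights are unchanged, so the defining condition of Definition~\ref{def:odd} is literally the same.
    \item Next I apply Lemma~\ref{lem:Z}(c) to $X_F$ with $F$ itself as the face. This shows that $Z_-(X_F)$ meets the dense orbit of $X_F$ in a set of dimension $\dim F^\sigma \ge q$. So $Z_-(X_F)$ has an irreducible component $Y$ with $\dim Y \ge q$.
    \item Then I split $V' = V'_+ \oplus V'_-$ into invariant and anti-invariant parts. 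I pick a basis $\theta_1,\dots,\theta_r$ of $V'_-$, where $r \le q$.
    \item As in the $(\Leftarrow)$ part of the proof of Theorem~\ref{thm:lsop}, I apply Lemma~\ref{lem:toric}(c) repeatedly. Before the $k$-th cut the current component has dimension at least $q-k+1 \ge 1$, so each cut leaves a nonempty set. Hence $Y\cap V(\theta_1,\dots,\theta_r)$ is nonempty.
    \item A point $\xi$ in this intersection is killed by $V'_-$ by construction. It is also killed by $V'_+ \subseteq (S_F)_1^{\sigma}$, since $Y \subseteq Z_-(X_F)$.
    \item So $\xi$ is a common zero of all of $V'$, and Lemma~\ref{lem:toric}(b) shows that $S_F/(V')$ is infinite-dimensional.
\end{enumerate}
The only point to verify is that Lemmas~\ref{lem:toric} and~\ref{lem:Z} hold verbatim for $F$ with the lattice $L_F$. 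This is what the paragraph before the statement asserts.

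For part (b), I argue in three steps.
\begin{enumerate}[label=(\roman*)]
    \item \emph{$F$ is a cell of $\mathcal S$.} The cells of $\mathcal S$ contained in $F$ form a $\sigma$-invariant lattice polyhedral subdivision of $F$, because $\mathcal S$ is $\sigma$-invariant and $F$ is a $\sigma$-invariant face. By hypothesis this subdivision is not proper, so it consists of $F$ and its faces.
    \item \emph{A surjection $S_{\mathcal S} \to S_F$.} Consider the span $I$ of the monomials $x^u$ with $u \notin C_F$. If $u\notin C_F$ and $u+v$ is defined in some cone $C_\Gamma$, then $u+v \notin C_F$, because $C_F$ is a face of $C_P$. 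So $I$ is an ideal. It is $\sigma$-stable and graded, since $\sigma(F)=F$.
    \item \emph{Identifying the quotient.} The quotient $S_{\mathcal S}/I$ has basis $x^u$ for $u \in C_F\cap\ZZ^{n+1}$. Two such points both lie in the cell $C_F$, so the product of their monomials is $x^{u+v}$. Hence the quotient is the Ehrhart ring $S_F$, and we obtain a $\sigma$-equivariant graded surjection $S_{\mathcal S}\to S_F$.
\end{enumerate}
Now suppose $V$ is a $\sigma$-stable l.s.o.p.\ of $S_{\mathcal S}$ with class $p\chi_0+q\chi_1$. Its image $V'$ in $(S_F)_1$ is $\sigma$-stable. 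Because the map is equivariant, the anti-invariant part of $V'$ is the image of $V_-$, so it has dimension at most $q$. Moreover $S_F/(V')$ is a quotient of $S_{\mathcal S}/(V)$, which is finite-dimensional, so $S_F/(V')$ is finite-dimensional too. This contradicts part (a).

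I expect the main obstacle to be part (b)(i)--(iii): showing that $F$ is genuinely a cell of $\mathcal S$, and that the quotient of the toric face ring by monomials outside $C_F$ is exactly the Ehrhart ring $S_F$ with its usual multiplication. In particular this needs that the cells of $\mathcal S$ inside a face of $P$ do form a subdivision of that face. Part (a) is essentially a repetition of the converse argument for Theorem~\ref{thm:lsop}.
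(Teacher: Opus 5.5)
Your proposal is correct and follows the paper's proof essentially step for step: in (a) you take a component of $Z_-(X_F)$ of dimension at least $q$ and cut it by a basis of $V'_-$, and in (b) you show that $F$ is a cell of $\mathcal S$ and pass to $S_F$ through the monomial surjection. The one small difference is how you prove that the monomials outside $C_F$ span an ideal: you use directly that $C_F$ is a face of $C_P$, which is slightly more economical than the paper's check that $C_F\cap C_\Gamma$ is a face of each $C_\Gamma$.
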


\begin{proof} Throughout this proof, the letter $\Gamma$ denotes a face of $F$ or a cell of a subdivision.

(a) Let $V' \subseteq (S_F)_1$ be a $\sigma$-stable subspace whose anti-invariant part has dimension at most $q$. We first find a component $Y$ of the invariant base locus of $X_F$ with $\dim Y = \dim F^\sigma$, and then cut $Y$ down by a basis of the anti-invariant part $V'_-$ to produce a common zero of $V'$.

Viewed in its own affine span, the face $F$ is still $\sigma$-odd, and the fixed part satisfies $\dim F^\sigma \ge q$ because $F$ is obstructing.
By Lemma~\ref{lem:Z}(c), the locus $Z_-(X_F)$ meets the dense orbit $\orb{F}$ of $X_F$ in a set of dimension $\dim F^\sigma$. A proper face $\Gamma$ of $F$ that is not $\sigma$-invariant is not $\sigma$-odd, so $Z_-(X_F)$ misses the orbit $\orb\Gamma$ by Lemma~\ref{lem:Z}(c). A proper $\sigma$-invariant face $\Gamma$ of $F$ satisfies $\dim\Gamma^\sigma < \dim F^\sigma$ by Lemma~\ref{lem:fixed}(c), so $Z_-(X_F)$ meets the orbit $\orb\Gamma$ in a set of strictly smaller dimension. So every orbit other than $\orb F$ contributes a set of dimension less than $\dim F^\sigma$. Hence the locus $Z_-(X_F)$ is nonempty of dimension $\dim F^\sigma$, and we select some irreducible component $Y$ of $Z_-(X_F)$ of that dimension.

The subspace $V'$ splits as $V' = V'_+\oplus V'_-$, and the anti-invariant part $V'_-$ has dimension $r \le q$ by hypothesis. The invariant part $V'_+$ vanishes on $Y$ because $Y \subseteq Z_-(X_F)$.
Let $\theta_1, \dots, \theta_r$ be a basis of $V'_-$. Let $Y_0 = Y$ and for each $k$ from $1$ to $r$ let $Y_k$ be a component of $Y_{k-1}\cap V(\theta_k)$. By induction, we check that such $Y_k$ exists and satisfies $\dim Y_k \ge \dim F^\sigma - k$. For $k = 0$, the component $Y$ satisfies the dimension bound. If $k \ge 1$, then by induction the bound holds for $k-1$ and so  
\[
\dim Y_{k-1} \ \ge\ \dim F^\sigma - k + 1 \ \ge\ \dim F^\sigma - q + 1 \ \ge\ 1.
\]
By Lemma~\ref{lem:toric}(c), the intersection $Y_{k-1}\cap V(\theta_k)$ is nonempty and each of its components has dimension at least $\dim Y_{k-1} - 1 \ge \dim F^\sigma - k$.
In particular, we have that $Y_r$ is nonempty. Observe that every element of $V'$ vanishes on $Y_r$: each element of the basis $\{\theta_i\}$ vanishes on $Y_r$, hence $V'_-$ vanishes on $Y_r$, and $V'_+$ vanishes on $Y_r$ because it vanishes on $Y$. Hence $V'$ has a common zero on $X_F$, and by Lemma~\ref{lem:toric}(b) we have that $S_F/(V')$ is infinite-dimensional.

\smallskip

(b) Let $\mathcal S$ be a $\sigma$-invariant lattice polyhedral subdivision of $P$, and assume that the face $F$ has no proper $\sigma$-invariant lattice subdivision. We first show that $F$ is a cell of $\mathcal S$. We then construct a $\sigma$-equivariant surjection of graded rings $\pi_F \colon S_{\mathcal S}\to S_F$, so that an l.s.o.p.\ of class $[\rho\otimes\CC]$ for $S_{\mathcal S}$ would give a subspace of $(S_F)_1$ of the kind excluded by part (a).

Since $\mathcal S$ is a $\sigma$-invariant subdivision of $P$, its restriction to $F$ is also a $\sigma$-invariant subdivision of $F$. By assumption $F$ has no proper $\sigma$-invariant lattice subdivision, so $F$ is a cell of that restriction and hence a cell of $\mathcal S$. 
Let $\pi_F \colon S_{\mathcal S} \rightarrow S_F$ be the map of graded vector spaces that sends a basis monomial $x^u$ to itself if the point $u$ lies in the cone $C_F$, and to zero otherwise.

The map $\pi_F$ is a surjective homomorphism of graded rings. For surjectivity, the face $F$ is a cell of $\mathcal S$, so each basis monomial $x^u$ of $S_F$ is also a basis monomial of $S_{\mathcal S}$ and hence lies in the image.
Note that both rings are graded by height, and $\pi_F$ preserves the grading because it sends each basis monomial either to itself or zero. We argue that $\pi_F$ is multiplicative as follows. We first check that the intersection $C_F\cap C_\Gamma$ is a face of the cone $C_\Gamma$ for every cell $\Gamma$ of $\mathcal S$.

Write $F = P \cap \{\ell = \max_P\ell\}$ for a supporting functional $\ell\colon\RR^{n+1}\to\RR$, and let $\Gamma$ be a cell of $\mathcal S$. If the cell $\Gamma$ does not meet $F$, then $C_F\cap C_\Gamma = \{0\}$, because a common point of positive height would give, after scaling to height one, a common point of $F$ and $\Gamma$. So the intersection $C_F \cap C_\Gamma$ is a face of $C_\Gamma$ in that case. Now suppose that $\Gamma$ meets $F$. Some point of $\Gamma$ attains the maximum value $\max_P\ell$, so $\max_\Gamma\ell = \max_P\ell$ and
\[
F\cap \Gamma \ =\ \Gamma \cap \{\ell = \max_\Gamma\ell\} ,
\]
which is a face of $\Gamma$. By passing to the cones, the intersection $C_F\cap C_\Gamma$ is the cone over that face, hence a face of $C_\Gamma$.

Now take two basis monomials $x^u$ and $x^v$ of $S_{\mathcal S}$ and take cases according to whether both of the points $u$ and $v$ lie in the cone $C_F$. If both $u$ and $v$
lie in the cone $C_F$, then they lie over the common cell $F$, so
$x^ux^v = x^{u+v}$ in $S_{\mathcal S}$, and the sum $u+v$ lies in $C_F$ as well and so $\pi_F(x^u) \pi_F(x^v) = x^{u+v} = \pi_F(x^{u+v})$.

Suppose now that one of the points, say $u$, lies outside $C_F$. Then $\pi_F(x^u)\pi_F(x^v) = 0$, so it remains to show that $\pi_F(x^ux^v) = 0$ as well, and here we take cases on whether the points $u$ and $v$ lie over a common cell. If the two points lie
over no common cell, then $x^ux^v = 0$. Finally, if they lie over a common cell
$\Gamma$ of $\mathcal S$, then $x^ux^v = x^{u+v}$, so it is enough to show that the sum $u+v$ lies outside $C_F$. Suppose instead that $u+v$ lies in $C_F$. Then $u+v$ lies in $C_F\cap C_\Gamma$, which is a face of $C_\Gamma$. Since $u$ and $v$ lie in $C_\Gamma$ and their sum lies in a face of $C_\Gamma$, both $u$ and $v$ lie in that face, hence $u \in C_F$, a contradiction. So the sum $u+v$ lies outside $C_F$ and $\pi_F(x^ux^v) = 0$ in this case.
Finally $\pi_F$ is equivariant because $\sigma(F) = F$ so $u \in C_F$ if and only if $\sigma u \in C_F$.

Suppose that the toric face ring $S_{\mathcal S}$ had a $\sigma$-stable l.s.o.p.\ $V$ of class $[V]= [\rho \otimes \CC]$, and let $V' = \pi_F(V)$ be its image. The map $\pi_F$ is equivariant, so $V'$ is a $\sigma$-stable subspace of $(S_F)_1$ whose anti-invariant part is the image of $V_-$ and so has dimension at most $\dim V_- = q$. Since $V$ is an l.s.o.p.\ of $S_{\mathcal S}$, the quotient $S_{\mathcal S}/(V)$ is finite-dimensional. Surjectivity of $\pi_F$ gives a surjection $S_{\mathcal S}/(V) \to S_F/(V')$, so the quotient $S_F/(V')$ is finite-dimensional as well, which contradicts part (a). Hence the toric face ring $S_{\mathcal S}$ has no $\sigma$-stable l.s.o.p.\ of class $[\rho\otimes\CC]$.
\end{proof}

The following corollary settles one implication of Question~\ref{q:433}.

\begin{corollary}[Effective with no invariant l.s.o.p.\ after subdivision]\label{cor:q433}
For the polytope $P = T\times[0,1]\times\{1\}$ in Example~\ref{ex:TxI}, the series $\varphi[t]$ is effective, the Ehrhart ring $S_P$ has no $\sigma$-stable l.s.o.p.\ of class $[\rho\otimes\CC]$, and no $\sigma$-invariant lattice polyhedral subdivision of $P$ has a toric face ring with a $\sigma$-stable l.s.o.p.\ of class $[\rho\otimes\CC]$. So condition (4) of Theorem~\ref{thm:stapledon} does not imply condition (3$'$) of Question~\ref{q:433}.
\end{corollary}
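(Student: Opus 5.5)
The plan is to assemble the three claims from results already in place. Effectiveness of $\varphi[t]$ is recorded in Example~\ref{ex:TxI}: there $h^*_P(t) = 1+5t+2t^2$ and $\varphi[t](\sigma) = (1+t)(1+2t) = 1+3t+2t^2$. Proposition~\ref{prop:eff} then applies coefficient by coefficient, since $1\le 1$, $3 \le 5$ and $2 \le 2$, giving $\varphi[t] = (1+4t+2t^2)\chi_0 + t\chi_1$. The absence of a $\sigma$-stable l.s.o.p.\ of class $[\rho\otimes\CC]$ for $S_P$ is the $(\Leftarrow)$ direction of Theorem~\ref{thm:lsop}. It therefore suffices to check that the square face $F = [(1,1,0),(1,0,1)]\times[0,1]\times\{1\}$ is obstructing.

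For this check I would argue as follows. $F$ is a face because it is the product of the edge $[(1,1,0),(1,0,1)]$ of $T$, cut out by $x=1$ maximizing $x+y_1+y_2$ on $T$, with $[0,1]$. Its four vertices are its only lattice points, and $\sigma$ swaps $(1,1,0,z,1)$ with $(1,0,1,z,1)$ for $z=0,1$, so $F$ is $\sigma$-invariant. The group $M_F$ is freely generated by these four vertices. Under the swap action its $\sigma$-fixed elements are spanned by the two orbit sums, each of height $2$. Hence every fixed element has even height and $F$ is $\sigma$-odd. Finally $F^\sigma$ is the segment $\{(1,\tfrac12,\tfrac12)\}\times[0,1]$, so $\dim F^\sigma = 1 = q$ and $F$ is obstructing.

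For the subdivision statement I would apply Proposition~\ref{prop:subdiv}(b) to this $F$. The remaining hypothesis is that $F$ has no proper $\sigma$-invariant lattice subdivision, and this is the only new point. A lattice subdivision of $F$ uses only the four vertices as vertices of cells. The only proper lattice polyhedral subdivisions of a unit square are its two triangulations, by one diagonal or the other.

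Next I would check how $\sigma$ acts on the two diagonals. One diagonal joins $(1,1,0,0)$ to $(1,0,1,1)$, and $\sigma$ sends it to the segment joining $(1,0,1,0)$ to $(1,1,0,1)$, which is the other diagonal. So $\sigma$ exchanges the two triangulations, and neither is invariant.

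It follows that $F$ has no proper $\sigma$-invariant lattice subdivision, and Proposition~\ref{prop:subdiv}(b) shows that no invariant subdivision $\mathcal S$ admits such an l.s.o.p. Together with effectiveness, this shows that condition (4) does not imply (3$'$). I expect the only mildly delicate step to be confirming that $F$ is a genuine face whose lattice points are exactly its four vertices. I would settle this directly from the explicit description of $T$, whose edge $[(1,1,0),(1,0,1)]$ is primitive.
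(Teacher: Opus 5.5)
Your overall route is the paper's. Effectiveness comes from the data of Example~\ref{ex:TxI} via Proposition~\ref{prop:eff}. The Ehrhart-ring claim is Theorem~\ref{thm:lsop} applied to the obstructing square $F$. The subdivision claim is Proposition~\ref{prop:subdiv}(b), using that $\sigma$ exchanges the two diagonals of the unit square $F$. The paper simply takes from Example~\ref{ex:TxI} the fact that $F$ is obstructing. Your attempt to re-derive that fact contains a false step, which you should repair.

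Write $a=(1,1,0,0,1)$, $b=(1,0,1,0,1)$, $c=(1,1,0,1,1)$ and $d=(1,0,1,1,1)$ for the vertices of $F$ in coordinates $(x,y_1,y_2,z,h)$.
\begin{itemize}
\item These vectors are not linearly independent, since $a+d=b+c$. So $M_F$ is not freely generated by them. It has rank $3=\dim\Span_\RR F$, with basis $a,b,c$.
\item Consequently the fixed sublattice $M_F\cap\Fix(\sigma)$ is not the $\ZZ$-span of the orbit sums $a+b$ and $c+d$. It is $\ZZ(a+b)\oplus\ZZ(b+c)$.
\item The element $b+c=\tfrac12\bigl((a+b)+(c+d)\bigr)$ is $\sigma$-fixed, but your argument never examines it.
\end{itemize}
$\sigma$-oddness is a statement about the exact integral fixed sublattice, and it is sensitive precisely to elements outside the $\ZZ$-span of the orbit sums. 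So the step as written does not establish that $F$ is $\sigma$-odd.

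The conclusion does survive, because $\hgt(b+c)=2$. A clean argument: the functional $y_1+y_2-h$ vanishes on the four vertices, hence on all of $M_F$. A $\sigma$-fixed $m\in M_F$ has $y_1(m)=y_2(m)\in\ZZ$, so $\hgt(m)=2y_1(m)$ is even.

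There is also a minor slip in the face check. The functional $x+y_1+y_2$ is maximised on the whole facet of $T$ through $(2,0,0)$, so it does not cut out the edge by itself. This is harmless: every pair of vertices of the simplex $T$ spans an edge, or you can use $y_1+y_2$ as the supporting functional. With these repairs your proof is complete.
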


\begin{proof}
From Example~\ref{ex:TxI}, we have that $\varphi[t]$ is effective and by Theorem~\ref{thm:lsop} we have that its Ehrhart ring does not have a $\sigma$-stable l.s.o.p.\ of class $[\rho\otimes\CC]$.
Let $F$ be the obstructing two-dimensional face of $P$ from Example~\ref{ex:TxI}. The face $F$ is a unit square, so its only proper lattice subdivisions are into two triangles joined along a diagonal. However $\sigma$ interchanges the two diagonals of $F$, so no proper lattice subdivision of $F$ is $\sigma$-invariant. 
So, by Proposition~\ref{prop:subdiv}(b), no $\sigma$-invariant lattice polyhedral subdivision of $P$ has a toric face ring with a $\sigma$-stable l.s.o.p.\ of class $[\rho\otimes\CC]$. Hence the polytope $P$ satisfies condition (4) of Theorem~\ref{thm:stapledon} but not condition (3$'$) of Question~\ref{q:433}.
\end{proof}

\subsection{Conjectures}
The first conjecture is a strengthening of the inequality in Proposition~\ref{prop:known-ineq}. Recall from Lemma~\ref{lem:Hform} that for a $\sigma$-invariant lattice polytope $P$ the Ehrhart series of $P^\sigma$ is the rational function $H(t)/(1-t^2)^p$ for a polynomial $H$ with nonnegative integer coefficients.

\begin{conjecture}[Coefficient-wise inequality]\label{conj:ineq}
For any $\sigma$-invariant lattice polytope $P$, we have
\[
\left[(1+t)^{p-q}h^*_P(t)\right]_i \ge [H(t)]_i \quad \text{if } p \ge q
\qquad
\text{and}
\qquad \left[h^*_P\right]_i \ge [(1+t)^{q-p}H(t)]_i \quad \text{if } p < q,
\]
where $[f(t)]_i$ denotes the coefficient of $t^i$ of the polynomial $f(t)$.
\end{conjecture}

Note that multiplying the polynomials on each side of the inequality by $(1+t)$ preserves the inequality, but dividing does not. For instance, take $A = 1+t^2$ and $B = t$ and observe that $(1+t)(A-B) = 1+t^3$ has nonnegative coefficients, so $(1+t)A$ and $(1+t)B$ are comparable but $A$ and $B$ are not.
In particular, for Example~\ref{ex:counterexample} where $p = 4 > 3 = q$, we have
\[
(1+t)h^*_P = 1 + 2t + t^2 + t^3 + 2t^4 + t^5
\quad \text{and} \quad
H = 1 + t^2 + t^3 + t^5,
\]
so the inequality in Conjecture~\ref{conj:ineq} holds. Below we explain that the conjecture is a strengthening of the previous inequality and explain the natural cases where it holds.

\begin{proposition}[Known cases of the inequality]\label{prop:ineq-known}
Conjecture~\ref{conj:ineq} implies Proposition~\ref{prop:known-ineq}. The conjecture holds if $\varphi[t]$ is a polynomial and effective. The conjecture also holds for Example~\ref{ex:counterexample}, where $\varphi[t]$ is a polynomial but is not effective.
\end{proposition}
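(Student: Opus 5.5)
The proof has three claims, and I would treat each by direct manipulation of the identity in Lemma~\ref{lem:Hform}, $(1+t)^p\varphi[t](\sigma) = (1+t)^q H(t)$, together with Proposition~\ref{prop:eff}.

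For the first claim, assume Conjecture~\ref{conj:ineq} and split on the sign of $p-q$. If $p \ge q$, the conjecture says that $(1+t)^{p-q}h^*_P - H$ has nonnegative coefficients. Multiplication by a polynomial with nonnegative coefficients preserves coefficient-wise inequalities, so I multiply by $(1+t)^{2q}$ to obtain $(1+t)^{p+q}h^*_P \ge (1+t)^{2q}H$. I then compare $(1+t)^{2q}$ with $(1+t^2)^q$. The difference $(1+t)^{2q} - (1+t^2)^q = \sum_j \binom{q}{j}t^{2j}\big((1+t)^{2(q-j)}\cdot\text{stuff}\big)$ is most easily handled by noting $(1+t)^2 = (1+t^2) + 2t$. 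Binomial expansion of $((1+t^2)+2t)^q$ gives $(1+t)^{2q} \ge (1+t^2)^q$ coefficient-wise, and multiplying by $H \ge 0$ yields Proposition~\ref{prop:known-ineq}. If $p < q$, the conjecture gives $h^*_P \ge (1+t)^{q-p}H$. Multiplying by $(1+t)^{p+q}$ gives $(1+t)^{p+q}h^*_P \ge (1+t)^{2q}H \ge (1+t^2)^q H$ by the same comparison.

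For the second claim, suppose $\varphi[t]$ is polynomial and effective. Proposition~\ref{prop:eff} gives $(h^*_P)_i \ge |\varphi_i(\sigma)| \ge \varphi_i(\sigma)$, so $h^*_P - \varphi[t](\sigma)$ has nonnegative coefficients. In the case $p \ge q$, I multiply by $(1+t)^{p-q}$ and use $(1+t)^{p-q}\varphi[t](\sigma) = H$, which is Lemma~\ref{lem:Hform} divided by $(1+t)^q$; this is valid since $\varphi[t](\sigma)$ is a polynomial. In the case $p<q$, Lemma~\ref{lem:Hform} gives $\varphi[t](\sigma) = (1+t)^{q-p}H$ directly, so the inequality is exactly $h^*_P \ge (1+t)^{q-p}H$. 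The only point needing care is that the absolute value is dropped in the right direction, which it is.

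For the third claim, Example~\ref{ex:counterexample} has $p=4$, $q=3$, $h^*_P = 1+t+t^3+t^4$ and $\varphi[t](\sigma) = 1-t+2t^2-t^3+t^4$. I compute $H = (1+t)\varphi[t](\sigma) = 1+t^2+t^3+t^5$ and $(1+t)h^*_P = 1+2t+t^2+t^3+2t^4+t^5$. The required inequality then holds by a coefficient comparison, as already displayed before the proposition.

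The only mildly nontrivial step is the comparison $(1+t)^{2q} \ge (1+t^2)^q$. I expect this to be straightforward via the binomial expansion of $((1+t^2)+2t)^q$, whose $j=0$ term is $(1+t^2)^q$ and whose remaining terms all have nonnegative coefficients.
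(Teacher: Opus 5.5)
Your proposal is correct and matches the paper's proof step by step. In both cases you multiply the conjectured inequality by $(1+t)^{\min(p,q)+q}$ and compare $(1+t)^{2q}\ge(1+t^2)^q$ via $(1+t)^2=(1+t^2)+2t$; you expand binomially where the paper iterates a product identity, and the half-written sum with ``stuff'' should simply be deleted. For the second claim you drop the absolute value from Proposition~\ref{prop:eff} and multiply by $(1+t)^{\max(p-q,0)}$, and for the third you check the coefficients of Example~\ref{ex:counterexample} directly, exactly as the paper does.
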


\begin{proof}
We first show that Conjecture~\ref{conj:ineq} implies Proposition~\ref{prop:known-ineq}, so suppose that the conjecture holds for $P$. Multiplying the polynomials on each side of the conjectured inequality by the same power of $(1+t)$ preserves the inequality. The conjecture has two cases, according to whether $p \ge q$, and in both cases multiplying each side by $(1+t)^{\min(p,q)+q}$ gives $[(1+t)^{p+q}h^*_P]_i \ge [(1+t)^{2q}H]_i$ for each $i$. So, to show that the conjecture implies Proposition~\ref{prop:known-ineq}, it suffices to show that $\left[(1+t)^{2q}H\right]_i \ge \left[(1+t^2)^{q}H\right]_i$ for each $i$. Suppose that two pairs of polynomials satisfy $A_1 \ge B_1 \ge 0$ and $A_2 \ge B_2 \ge 0$ coefficient-wise. Then
\[
A_1A_2 - B_1B_2 \ =\ (A_1-B_1)A_2 + B_1(A_2-B_2)
\]
has nonnegative coefficients. Since $(1+t)^2 - (1+t^2) = 2t$, applying the identity $q-1$ times gives $(1+t)^{2q} \ge (1+t^2)^q$ coefficient-wise, and applying it once more with the polynomial $H$ of Lemma~\ref{lem:Hform}, which has nonnegative coefficients, gives the required inequality.

Suppose next that $\varphi[t]$ is a polynomial and effective. Then $[h^*_P]_i \ge [\varphi[t](\sigma)]_i$ for each $i$ by Proposition~\ref{prop:eff}. Multiplying both sides by $(1+t)^{\max(p-q,0)}$ and using the identity of Lemma~\ref{lem:Hform} gives the inequality of Conjecture~\ref{conj:ineq}, in the case $p \ge q$ and in the case $p < q$.

For the last claim, the computation before this proposition checks the inequality for Example~\ref{ex:counterexample}.
\end{proof}

Our other conjecture is about the effectiveness of $\varphi[t]$ for dilates of $P$.

\begin{conjecture}[Effectiveness of dilates]\label{conj:dilate}
If $\varphi_P$ is effective, then $\varphi_{mP}$ is effective for each positive integer $m$.
\end{conjecture}

If $P$ has no obstructing face, then the conjecture holds by the last claim of Proposition~\ref{prop:dilate}. By \cite[Corollary~7.14]{Sta11}, we know that $\varphi_{mP}[t]$ is effective for every even $m$ and so let us consider the odd dilates. In Example~\ref{ex:square}, we have that $\varphi_P[t]$ is not polynomial hence not effective, and by Proposition~\ref{prop:dilate} we have that $\varphi_{mP}[t]$ is not polynomial hence not effective for every odd $m$. So this suggests a dichotomy, either $\varphi_{mP}[t]$ is effective for every $m$ or $\varphi_{mP}[t]$ is effective only for even $m$. We pose this as the following question.

\begin{question}[Sets of effective dilates]\label{q:dilate-set}
For which sets $E$ of odd positive integers is there a $\sigma$-invariant lattice polytope $P$ such that $\varphi_{mP}[t]$ is effective exactly for the even positive integers $m$ together with the integers $m \in E$? The dichotomy above suggests that $E$ is either empty or the set of all odd positive integers.
\end{question}

\begin{remark}[Permutation representations and the equivariant volume]\label{rem:dead}
In many families of examples, such as simplices, it is known that the coefficients of $\varphi[t]$ are permutation representations. In our setting of a group of order two, the $i$th coefficient is a permutation representation if and only if $0 \le \varphi_i(\sigma) \le (h^*_P)_i$. We have seen examples that do not satisfy this condition. For instance, the polytope of Example~\ref{ex:circuit} has $\varphi[t] = 1 + \chi_1 t + t^2$, whose linear coefficient fails the condition $0 \le \varphi_1(\sigma)$. And in Example~\ref{ex:counterexample}, the condition $\varphi_2(\sigma) \le (h^*_P)_2$ fails.

Let us compare these observations with \cite[Conjecture~12.2]{Sta11}, which says that if $\varphi[t]$ is effective then its evaluation at $1$, called the \emph{equivariant volume}, is a permutation representation. Note that the conjecture holds immediately for any cyclic group of prime order by an observation of Stapledon. However, in both Example~\ref{ex:counterexample} and Example~\ref{ex:counterexample-c2} the equivariant volume is $\varphi[1] = 3\chi_0 + \chi_1$, which is a permutation representation even though $\varphi[t]$ is not effective. That coincidence might lead us to ask whether polynomiality of $\varphi[t]$ implies that $\varphi[1]$ is a permutation representation. We note that a positive answer would be a consequence of Conjecture~\ref{conj:ineq}, as follows.
Evaluating the identity of Lemma~\ref{lem:Hform} at $t = 1$ gives $H(1) = 2^{\,p-q}\varphi[1](\sigma)$, and the polynomial $H$ has nonnegative coefficients, hence $\varphi[1](\sigma) \ge 0$. Evaluating Conjecture~\ref{conj:ineq} at $t = 1$ then gives $\varphi[1](\sigma)\le h^*_P(1) = \varphi[1](e)$. Write $\varphi[1] = \lambda\chi_0 + \mu\chi_1$, so that $\lambda+\mu = \varphi[1](e)$ and $\lambda-\mu = \varphi[1](\sigma)$. The two inequalities give $\lambda \ge \mu \ge 0$, and polynomiality of $\varphi[t]$ makes both $\lambda$ and $\mu$ integers, hence $\varphi[1]$ is a permutation representation. Integrality is needed here: for the orthogonally reflected square of Example~\ref{ex:square}, the rational function $\varphi[t](\sigma)$ takes the value $1$ at $t = 1$ while $h^*_P(1) = 2$, so the two inequalities hold for $\lambda = \frac32$ and $\mu = \frac12$, which are not the multiplicities of a representation.
\end{remark}

\section{Other counterexamples}

In the final stages of preparing this paper, the author was made aware of two counterexamples to Conjecture~\ref{conj:eff} that had recently appeared in preprints.
For each example we use the notation of Section~\ref{setup}, with the polytope placed at height one.
 
\begin{example}[A counterexample of Chalmers {$S_3$} \cite{Cha26}]\label{ex:chalmers}
Let $n = 4$ and write the coordinates of $\ZZ^5$ as $(x_1,x_2,x_3,w,h)$ with $\widetilde M_0 = \{h=0\}$ and fixed vector $\alpha = (0,0,0,0,1)$. Let $G = S_3$ act by permuting the coordinates $x_1,x_2,x_3$ and fixing $w$. Write $e_1,e_2,e_3$ for the standard basis of $\ZZ^3$ and fix $u = (1,1,1)$. Define the polytope $P$ with seven vertices:
\[
P := \conv\left(\{(0,0,0,1,1)\} \cup \{(-e_i,\,-1,\,1),\, (u-e_i,\,-1,\,1) \colon \text{for } 1 \le i \le 3\}\right) \subseteq \widetilde M.
\]
Then we have:
\begin{itemize}[]
    \item the polytope $P$ is $4$-dimensional and reflexive, with seven vertices and
    normalised volume $\Vol(P) = 18$,
    
    \item the $h^*$-polynomial is $h^*_P = 1 + 4t + 8t^2 + 4t^3 + t^4$,
    \item let $\chi_0$, $\chi_1$, and $\chi_2$ be the trivial, sign, and standard representations of $S_3$, then the equivariant $H^*$-series is
    \[
    \varphi[t] = \chi_0 + (2\chi_0 + \chi_2) t + (5\chi_0 - \chi_1 + 2\chi_2) t^2 + (2\chi_0 + \chi_2) t^3 + \chi_0 t^4 ,
    \]
    \item so $\varphi[t]$ is a polynomial but its the coefficient of $t^2$ is not effective.
\end{itemize}
We note that restricting the action to any subgroup generated by a transposition $\sigma$ (the choice is not important as all transpositions are conjugate) gives $\varphi[t](\sigma) = 1 + 2t + 6t^2 + 2t^3 + t^4$, hence
\[
\varphi[t] = (1 + 3t + 7t^2 + 3t^3 + t^4)\chi_0 + (t + t^2 + t^3)\chi_1
\]
for $\langle \sigma \rangle$, which is effective. 
\end{example}
 
\begin{example}[Counterexamples Ziegler \cite{Zie26}]\label{ex:ziegler}
Let $Q \subseteq \RR^r$ be an $r$-dimensional lattice polytope and write the coordinates of $\ZZ^{r+4}$ as $(x,y,z,u,h)$ with $z \in \RR^r$. Let $\widetilde M_0 = \{h = 0\} \subseteq \RR^{r+4}$ and $\alpha = (0,0,0,0,1)$. Let $R(Q) \subseteq \widetilde M$ be the polytope:
\[
R(Q) = \conv\left(
\{(a,b,0,1,1) \colon a,b \in \{0,1\}\}
\cup 
\{(0,0,v,0,1) \colon v \in Q\} 
\right),
\]
which is an $(r+3)$-dimensional lattice polytope. Let $\sigma$ act by
\[
\sigma(x,y,z,u,h) = (u-x,\,y,\,z,\,u,\,h).
\]
In particular $\sigma$ is an involution with type $(r+2,0,1)$, so $p = r+3$ and $q = 1$. The copy of matrix $U$ in the normal form comes from the coordinates $x$ and $u$. 
Up to lattice equivalence, the polytope $R(Q)$ is the \emph{free join} of the unit square in Example~\ref{ex:square} with $Q$, where $\sigma$ acts trivially on $Q$. Since the equivariant $H^*$-series is multiplicative under free joins \cite[Lemma~2.22]{Sta23}, the series of $R(Q)$ is the product of $h^*_Q$ with the equivariant $H^*$-series of the square.

Let $S$ be the orthogonally reflected square in Example~\ref{ex:square}. By Example~\ref{ex:poly-in-action}, we have that $\varphi_S[t](\sigma)$ has a pole of order one at $t = -1$. So if $h^*_Q(-1) = 0$, then the poles cancel. In that case, write $h^*_Q(t) = (1+t)\,\mathbf q(t)$ for some polynomial $\mathbf q(t)$, and we have
\[
\varphi_{R(Q)}[t] = (1+t+t^2)\,\mathbf q(t)\,\chi_0 \ +\ t\,\mathbf q(t)\,\chi_1 ,
\]
which is a polynomial, but is effective if and only if $\mathbf q$ has nonnegative coefficients. Ziegler gives two choices of $Q$ (one-row Hermite normal form simplices) that lead to counterexamples.
\begin{itemize}[]
    \item Let $Q = \conv\{0,\,e_1,\,e_2,\,e_3,\,e_4,\,(1,1,1,1,2)\} \subseteq \RR^5$ be a $5$-simplex with $h^*_Q = 1+t^3$ and so $\mathbf q(t) = 1-t+t^2$. Then $R(Q)$ is an $8$-dimensional lattice polytope with ten vertices, and
    \[
    h^*_{R(Q)} = 1 + t + t^3 + t^4, \quad \text{ hence } \quad
    \varphi[t] = (1+t^2+t^4)\chi_0 + (t - t^2 + t^3)\chi_1 .
    \]
    The coefficient $\chi_0 - \chi_1$ of $t^2$ is not effective. In this case, we note that lattice points of $R(Q)$ generate a subgroup of index two, so (Gen) does not hold.
    
    \item Let $Q = \conv\{0,\,e_1,\dots,e_9,\,(13,13,13,13,13,13,13,13,10,14)\} \subseteq \RR^{10}$ be a $10$-simplex with normalised volume $14$ with
    \[
    h^*_Q = 1+t+t^2+2t^3+t^4+2t^5+t^6+t^7+2t^8+t^9+t^{10},
    \]
    and so $\mathbf q(t) = 1+t^2+t^3+2t^5-t^6+2t^7+t^9$. Then $R(Q)$ is a $13$-dimensional lattice polytope with fifteen vertices and
    $
    \varphi[t] = \sum_{i=0}^{11} (c_i \chi_0 + d_i \chi_1) t^i
    $
    is a degree eleven polynomial where 
    \[
    (c_i) = (1,1,2,2,\ 2,3,1,3,\ 1,3,1,1)
    \quad \text{and} \quad
    (d_i) = (0,1,0,1,\ 1,0,2,-1,\ 2,0,1,0).
    \]
    The coefficient $3\chi_0 - \chi_1$ of $t^7$ is not effective. In this case the lattice points of $R(Q)$ affinely generate $\ZZ^{13}$, so (Gen) holds.
\end{itemize}
\end{example}
 
\medskip
 
\noindent \textbf{Relation to our results.}
Both polytopes of Example~\ref{ex:ziegler} have $q = 1$ and $p = n$, so each one is an example of case (C2) of Proposition~\ref{prop:open}. 
The square face is $\sigma$-odd and its fixed part has dimension $1 \ge q$, hence it is obstructing.
Recall that Corollary~\ref{cor:p-eq-q-plus-1} says that if $p = q+1$ and $P$ satisfies (Gen) then $\varphi[t]$ is polynomial if and only if it is effective. The $13$-dimensional polytope satisfies (Gen), which shows that the condition $p=q+1$ is necessary.
We note also that Ziegler's $8$-dimensional counterexample has the same equivariant $H^*$-series, and the same $h^*$-polynomial, as our $6$-dimensional Example~\ref{ex:counterexample}, but with two different types of action. 
For a group of order two, the smallest known dimension for a counterexample to the Effectiveness Conjecture is six. See Question~\ref{q:small}. However, for groups in general Example~\ref{ex:chalmers} shows that the minimum dimension is either three or four.

\section*{Acknowledgements}

The author would like to thank Akihiro Higashitani and Max K\"olbl for helpful communications and discussions.

\bibliographystyle{plain}
\bibliography{ref}

@article{ASVM,
  author  = {Ardila, Federico and Supina, Mariel and Vindas-Mel\'endez, Andr\'es R.},
  title   = {The equivariant {E}hrhart theory of the permutahedron},
  journal = {Proc. Amer. Math. Soc.},
  volume  = {148},
  number  = {12},
  year    = {2020},
  pages   = {5091--5107}
}

@article{BHW,
  author  = {Bey, Christian and Henk, Martin and Wills, J\"org M.},
  title   = {Notes on the roots of {E}hrhart polynomials},
  journal = {Discrete Comput. Geom.},
  volume  = {38},
  number  = {1},
  year    = {2007},
  pages   = {81--98}
}

@article{BM85,
  author  = {Betke, Ulrich and McMullen, Peter},
  title   = {Lattice points in lattice polytopes},
  journal = {Monatsh. Math.},
  volume  = {99},
  number  = {4},
  year    = {1985},
  pages   = {253--265}
}

@book{BH,
  author    = {Bruns, Winfried and Herzog, J\"urgen},
  title     = {Cohen--Macaulay Rings},
  series    = {Cambridge Studies in Advanced Mathematics},
  volume    = {39},
  publisher = {Cambridge University Press},
  year      = {1993}
}

@article{CHK,
  author  = {Clarke, Oliver and Higashitani, Akihiro and K\"olbl, Max},
  title   = {The equivariant {E}hrhart theory of polytopes with order-two symmetries},
  journal = {Proc. Amer. Math. Soc.},
  volume  = {151},
  number  = {9},
  year    = {2023},
  pages   = {4027--4041}
}

@article{Hyp,
  author  = {Clarke, Oliver and K\"olbl, Max},
  title   = {Equivariant {E}hrhart theory of hypersimplices},
  journal = {Forum Math. Sigma},
  volume  = {13},
  year    = {2025},
  note    = {Paper No. e178}
}

@book{CLS,
  author    = {Cox, David A. and Little, John B. and Schenck, Henry K.},
  title     = {Toric Varieties},
  series    = {Graduate Studies in Mathematics},
  volume    = {124},
  publisher = {American Mathematical Society},
  year      = {2011}
}

@misc{DD,
  author = {D'Al\`i, Alessio and Delucchi, Emanuele},
  title  = {Equivariant {H}ilbert and {E}hrhart series under translative group actions},
  note   = {arXiv:2312.14088},
  year   = {2025}
}

@misc{DH,
  author = {D'Al\`i, Alessio and Higashitani, Akihiro},
  title  = {Order polytopes of graded posets are gamma-effective},
  note   = {arXiv:2505.07623},
  year   = {2025}
}

@article{EKS,
  author  = {Elia, Sophia and Kim, Donghyun and Supina, Mariel},
  title   = {Techniques in equivariant {E}hrhart theory},
  journal = {Ann. Comb.},
  volume  = {28},
  number  = {3},
  year    = {2024},
  pages   = {819--870}
}

@article{Hoc72,
  author  = {Hochster, Melvin},
  title   = {Rings of invariants of tori, {C}ohen--{M}acaulay rings generated by monomials, and polytopes},
  journal = {Ann. of Math.},
  volume  = {96},
  year    = {1972},
  pages   = {318--337}
}

@article{McM78,
  author  = {McMullen, Peter},
  title   = {Lattice invariant valuations on rational polytopes},
  journal = {Arch. Math.},
  volume  = {31},
  year    = {1978},
  pages   = {509--516}
}

@article{MW05,
  author  = {McAllister, Tyrrell B. and Woods, Kevin M.},
  title   = {The minimum period of the {E}hrhart quasi-polynomial of a rational polytope},
  journal = {J. Combin. Theory Ser. A},
  volume  = {109},
  number  = {2},
  year    = {2005},
  pages   = {345--352}
}

@article{Rei57,
  author  = {Reiner, Irving},
  title   = {Integral representations of cyclic groups of prime order},
  journal = {Proc. Amer. Math. Soc.},
  volume  = {8},
  year    = {1957},
  pages   = {142--146}
}

@article{Sta78,
  author  = {Stanley, Richard P.},
  title   = {Hilbert functions of graded algebras},
  journal = {Adv. Math.},
  volume  = {28},
  year    = {1978},
  pages   = {57--83}
}

@article{Sta87,
  author  = {Stanley, Richard P.},
  title   = {On the number of faces of centrally-symmetric simplicial polytopes},
  journal = {Graphs Combin.},
  volume  = {3},
  year    = {1987},
  pages   = {55--66}
}

@article{Sta93,
  author  = {Stanley, Richard P.},
  title   = {A monotonicity property of $h$-vectors and $h^*$-vectors},
  journal = {European J. Combin.},
  volume  = {14},
  number  = {3},
  year    = {1993},
  pages   = {251--258}
}

@book{StaEC1,
  author    = {Stanley, Richard P.},
  title     = {Enumerative Combinatorics, Volume 1},
  edition   = {2nd},
  publisher = {Cambridge University Press},
  year      = {2012}
}

@article{Sta11,
  author  = {Stapledon, Alan},
  title   = {Equivariant {E}hrhart theory},
  journal = {Adv. Math.},
  volume  = {226},
  number  = {4},
  year    = {2011},
  pages   = {3622--3654}
}

@misc{Sta23,
  author = {Stapledon, Alan},
  title  = {Equivariant {E}hrhart theory, commutative algebra and invariant triangulations of polytopes},
  note   = {arXiv:2311.17273v3},
  year   = {2025}
}

@article{Ste94,
  author  = {Stembridge, John R.},
  title   = {Some permutation representations of {W}eyl groups associated with the cohomology of toric varieties},
  journal = {Adv. Math.},
  volume  = {106},
  number  = {2},
  year    = {1994},
  pages   = {244--301}
}

@article{Sta80,
  author  = {Stanley, Richard P.},
  title   = {Decompositions of rational convex polytopes},
  journal = {Ann. Discrete Math.},
  volume  = {6},
  year    = {1980},
  pages   = {333--342}
}

@Misc{Normaliz,
  author = {W. Bruns and B. Ichim and C. S{\"{o}}ger and U. von der Ohe},
  title = {Normaliz. Algorithms for rational cones and affine monoids},
  howpublished = {Available at \texttt{https://www.normaliz.uni-osnabrueck.de}}
}

@article{IR07,
  author  = {Ichim, Bogdan and R\"omer, Tim},
  title   = {On toric face rings},
  journal = {J. Pure Appl. Algebra},
  volume  = {210},
  number  = {1},
  year    = {2007},
  pages   = {249--266}
}

@misc{Cha26,
  author = {Chalmers, Logan R.},
  title  = {Stapledon's effectiveness conjecture is false},
  year   = {2026},
  note   = {Preprint, \url{https://doi.org/10.13140/RG.2.2.10790.48962}}
}

@misc{Zie26,
  author = {Ziegler, Aran S.},
  title  = {A counterexample to {S}tapledon's effectiveness conjecture},
  year   = {2026},
  note   = {Preprint, \url{https://doi.org/10.5281/zenodo.23202548}}
}

\bigskip
\footnotesize

\noindent
{\sc Oliver Clarke, Department of Mathematical Sciences, Durham University}\\
\textit{Email address}: \texttt{oliver.clarke@durham.ac.uk}\\
\textit{URL}: \texttt{https://www.oliverclarkemath.com/}\\

\end{document}